\documentclass[12pt]{amsart}
\usepackage{a4wide}
\usepackage[open, depth=2]{bookmark}
\usepackage{amsmath}
\usepackage{amsfonts}
\usepackage{amssymb}
\usepackage{array}
\usepackage{nicematrix}
\usepackage{bbold}
\usepackage{mathrsfs}
\usepackage{pb-diagram}
\usepackage{color}
\usepackage{dynkin-diagrams}
\usepackage{epstopdf}
\usepackage{enumerate}
\usepackage{comment}
\usepackage{mathdots}
\usepackage{bbm}
\usepackage{tikz}
\usepackage{tkz-euclide}
\usepackage{cleveref}
\usepackage{stmaryrd}
\usepackage{epstopdf}
\newtheorem{lemma}{Lemma}[subsection]
\newtheorem{remark}[lemma]{Remark}
\newtheorem{theorem}[lemma]{Theorem}
\newtheorem{corollary}[lemma]{Corollary}

\newtheorem{proposition}[lemma]{Proposition}

\usepackage{tikz, tikz-cd}
\usetikzlibrary{matrix}
\usetikzlibrary{shapes}
\usetikzlibrary{arrows}
\usetikzlibrary{calc,3d}
\usetikzlibrary{decorations,decorations.pathmorphing}
\usetikzlibrary{through}

\usepackage{xcolor}

\newtheorem{theoremintro}{Theorem}

\usepackage[style=alphabetic,maxnames=99,maxcitenames=99, maxalpha names=99, sorting=nyt]{biblatex}

\newcommand{\supp}{\operatorname{supp}}
\newcommand{\C}{\mathbb{C}}
\newcommand{\Seg}{\operatorname{Seg}}

\newcommand{\m}{\mathfrak{m}}
\newcommand{\n}{\mathfrak{n}}

\newcommand{\Z}{\mathbb{Z}}

\newcommand{\Tab}{\mathrm{Tab}}

\newcommand{\Hom}{\mathrm{Hom}}
\newcommand{\Rep}{\mathrm{Rep}}
\newcommand{\Irr}{\mathrm{Irr}}

\newcommand{\rres}{\mathrm{Res}}

\newcommand{\iind}{\mathrm{Ind}}

\newcommand{\res}{\mathrm{res}}

\newcommand{\ch}{\mathrm{ch}}
\newcommand{\seg}{\mathrm{Seg}}

\newcommand{\tab}{\mathrm{Tab}}

\newcommand{\KK}{\mathcal{K}(\underline{\m},s)}
\newcommand{\JJ}{\mathcal{J}(\underline{\m})}

\newcommand{\qch}{\boldsymbol{\chi^\infty}}
\newcommand{\qchp}{\boldsymbol{\chi^\infty_+}}
\newcommand{\qchN}{\boldsymbol{\chi^N}}
\newcommand{\qchNp}{\boldsymbol{\chi^N_+}}

\usepackage[foot]{amsaddr}

\begin{document}

\title{On reciprocal characters and the quantum affine Schur--Weyl duality}

\begin{abstract}
We identify the dominant part of the Frenkel--Reshetikhin $q$-character with a natural invariant arising from the Langlands/Zelevinsky parameterization for affine Hecke algebras. 

We introduce the reciprocal character of a module over a $GL_n$-type affine Hecke algebra, defined in terms of multiplicities within parabolic restriction. The main theorem claims that the reciprocal character matches, under quantum affine Schur--Weyl duality, with the dominant $q$-character for finite-dimensional modules over quantum affine algebras.

This result gives a type $A$ realization of the Nakajima expectation that the dominant monomials in the $q$-character should play the role of monomial-basis coordinates in Lusztig's framework for finite quantum groups.  

Indeed, under the affine Hecke categorification of $U_q(\mathfrak{sl}_\infty)^+$, we prove that the reciprocal character is the specialization at $q=1$ of the coordinate map attached to a monomial basis. As a consequence, dominant $q$-character multiplicities for simple (or standard) modules are described by transition coefficients between monomial and canonical (or PBW) bases. 

Our methods rely on the development of explicit tableau-counting formulas for such dominant multiplicities, or equivalently for the reciprocal characters of standard modules over affine Hecke algebras.






\end{abstract}

\author{Maxim Gurevich}
	\address{Department of Mathematics, Technion -- Israel Institute of Technology, Haifa, Israel.}
	\email{maxg@technion.ac.il}
	\thanks{This research is supported by the Israel Science Foundation (Grant Number: 2542/25).} 
	\author{Angelina Vargulevich}
	\address{Department of Mathematics, Technion -- Israel Institute of Technology, Haifa, Israel.}
	\email{angelinav@campus.technion.ac.il}
    \date{\today}
	
	\maketitle


\section{Introduction}

A central invariant in the finite-dimensional representation theory of quantum affine algebras is the $q$-character, as introduced in \cite{MR1745260}. Early on, its significance was amplified in the influential work of Nakajima \cite{nakajima-annals}, where deformations of the $q$-character were applied to produce a quantum affine analogy to Lusztig's geometric categorification approach \cite{lusztig-canon} for quantum groups of finite type.

In the exposition of \cite{nakajima-annals}, it is suggested that the role of the \textit{monomial bases} in Lusztig’s framework (or, alternatively, of Feigin’s map as formulated in \cite{reineke-fei}) persists in the quantum affine setting, where it is proposed to be played by the \textit{dominant part} of the $q$-character.


Another cornerstone of the quantum affine theory is the Schur–Weyl duality, as developed in \cite{Chari-Pressley,MR1302015}. This variant of the duality is realized via a family of exact functors $\mathcal{F}_{N,n}$ with favorable properties, which send modules over an affine Hecke algebra $H_{n,v}$, of type $\mathrm{GL}_n$ and a (non-root of unity) deformation parameter $v\in \C^\times$, to modules over $U_{v}\left(\widehat{\mathfrak{sl}_{N+1}}\right)$.

Our work achieves a unified resolution of two closely related objectives:

\begin{itemize}
    \item We construct a native Hecke-algebraic invariant, through Langlands formalism, that mirrors (Theorem \ref{thm:Aintro}) the dominant part of the $q$-character through the Schur--Weyl duality. 
    \item We prove a precise formulation (Theorem \ref{thm:introC}) of the Nakajima expectation on the quantum affine analogue of Lusztig's monomial basis, in Lie type $A$.

\end{itemize}

A major portion of this work deals with the representation theory of the affine Hecke algebras $H_{n,v}$, a theory conveniently situated at the intersection of several themes: Beyond its role in Schur–Weyl duality, the category of $H_{n,v}$-modules is well known to encode aspects of the representation theory of $p$-adic groups, and to appear naturally in categorifications of quantum groups of finite type.


The connection with $p$-adic groups allows us to take a viewpoint of the Langlands quotient formalism (as in \cite{KL}) for the setting of $H_{n,v}$-modules, where it admits a combinatorial realization as the Zelevinsky classification. From this perspective, we introduce a notion of a \textit{reciprocal character} of a module, which will serve as the sought-after invariant that mirrors the quantum affine $q$-character.



Our methods are primarily combinatorial. A key by-product of our approach are explicit tableau-counting formulas for the values (Theorem \ref{thm:introD}) of both the dominant part of the $q$-character and the reciprocal character, when evaluated on standard modules on the two sides of the Schur–Weyl duality. We also show that these formulas are equivalent to the full transition matrix between a PBW basis and a monomial basis of the positive part $U_v(\mathfrak{sl}_k)^+$ of the finite-type quantum group. 

\subsection{Setup}

We consider the Grothendieck ring $G[N]$ of the monoidal category of type $1$ modules over the Hopf algebra $U_{v}\!\left(\widehat{\mathfrak{sl}_{N+1}}\right)$.\footnote{We now fix a non-root of unity $v\in \C^\times$, while retaining the standard terminology of “$q$-character”. }

\subsubsection{Dominant $q$-character}

The classification by Drinfeld polynomials (Proposition \ref{prop:poly-ring}) yields an identification
\begin{equation}\label{eq:drinfeld}
G[N]\cong \Z[\mathscr{D}_{N}]
\end{equation}
of the Grothendieck ring with the (commutative) polynomial ring generated by the variables $\mathscr{D}_{N} = \{Y(i,a)\}_{i,a}$. These variables correspond to fundamental simple modules, indexed by a choice of simple root $i\in \{1,\ldots,N\}$ and a spectral parameter $a\in \C^\times$.

The $q$-character of a module $V$ records the dimensions of generalized eigenspaces $\{\dim(V_{\m})\}_{\m}$ with respect to a certain commutative subalgebra. The weights $\m = Y(i_1,z_1)^{r_1}\cdots Y(i_l,z_l)^{r_l}$\footnote{In the body of this work, an exponential notation $\exp(\m)$ will be used to denote monomials in a polynomial ring. For ease of presentation, we write such monomials as $\m$ throughout the introduction section.} (with $r_1,\ldots,r_l\in \Z$) are identified with Laurent monomials in same variables $\mathscr{D}_N$, a set we denote as $\Z^{\mathscr{D}_N}$. Thus, an injective ring homomorphism
\[
\chi^N: G[N] \hookrightarrow \Z[\mathscr{D}_{N}^{\pm1}]\;,\qquad 
\chi^N(V)  = \sum_{\m\in \Z^{\mathscr{D}_N}}\dim(V_{\m}) \;\m,
\]
is obtained, whose codomain is a Laurent polynomial ring.


A weight is called \emph{dominant} when the corresponding monomial lies in the polynomial subring, i.e.\ when $r_1,\ldots,r_l\geq 0$. The dominant part of the $q$-character is then defined by truncation:
\[
\chi^N_+: G[N] \to \Z[\mathscr{D}_N]\;,\qquad 
\chi^N_+(V) = \sum_{\m\in \Z_{\geq0}^{\mathscr{D}_N}}\dim(V_{\m}) \;\m.
\]

By \cite[Theorem 3.5(3)]{nakajima-annals}, the map $\chi_+^N$, although not multiplicative, is an additive isomorphism. In view of \eqref{eq:drinfeld}, it may therefore be regarded as a locally finite additive automorphism of the polynomial ring $\Z[\mathscr{D}_N]$.


\subsubsection{Bernstein--Zelevinsky ring}\label{sect:bz-intro}

A protagonist of our setting is the polynomial ring, whose variables are indexed by the set of \emph{segments} $\widetilde{\Seg}$. These are formal pairs $[x,y]$, with $x,y\in \C^\times$ related by $y= x v^{2k}$ for integer $k\geq 0$.


The Zelevinsky classification yields an identification (Section \ref{sect:zel})
\begin{equation}\label{eq:bz-ring}
\mathbf{R}:= \bigoplus_{n=0}^\infty G(H_{n,v}) \cong\Z\left[\widetilde{\seg}\right]
\end{equation}
of rings, where $\mathbf{R}$ consists of a sum of affine Hecke algebra Grothendieck groups equipped with a parabolic induction product, while the right-hand side is a polynomial ring with $\widetilde{\seg}$ as the set of variables. 

From another perspective, there are natural quotient maps
\[
P_N: \Z\left[\widetilde{\seg}\right] \twoheadrightarrow \Z[\mathscr{D}_N]
\]
between the polynomial rings, given by
\[
\Delta=\left[av^{-k},av^{k}\right]\in \widetilde{\seg}\,\mapsto\, P_N(\Delta) = \left\{\begin{array}{ll}  Y(k+1, a) & k<N \\
1 & k=N \\ 0 & k>N
\end{array} \:\in \Z[\mathscr{D}_N] \right.\;,
\]
which relate the two realizations. Combining the parametrizations of simple modules by Drinfeld polynomials \eqref{eq:drinfeld} and by Zelevinsky/Langlands data \eqref{eq:bz-ring}, it is known (explicated in Theorem \ref{thm:commute-qasw})that $P_N$ coincides with the semisimplification of the combined Schur--Weyl duality functor $\bigoplus_{n=0}^\infty \mathcal{F}_{N,n}$.


We now state the main result of this work. It regards an additive isomorphism 
\[
\ch^{\otimes}:\mathbf{R}\to \Z\!\left[\widetilde{\seg}\right]
\]
that will be described below \eqref{eq:rec-ch}.

\begin{theoremintro}\label{thm:Aintro}
Let $V$ be a finite-dimensional $H_{n,v}$-module and $W= \mathcal{F}_{N,n}(V)$ be the finite-dimensional $U_{v}(\widehat{\mathfrak{sl}_{N+1}})$-module obtained from $V$ by quantum affine Schur--Weyl duality.

Then, the relation
    \[
P_N(\ch^{\otimes}(V)) = \chi^N_+(W)\in \Z[\mathscr{D}_N]
    \]
holds, between the dominant part of the quantum affine $q$-character and the reciprocal affine Hecke character.

\end{theoremintro}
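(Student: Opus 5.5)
Both sides of the identity are additive in $V$. The left side is additive because $\ch^{\otimes}$ is an additive map on $\mathbf{R}$ and $P_N$ is a ring map. The right side is additive because $\mathcal{F}_{N,n}$ is exact and $\chi^N_+$ is additive on $G[N]$. So it suffices to check the identity on a $\Z$-basis of $G(H_{n,v})$. The natural choice is the basis of standard modules, which in the Zelevinsky realization \eqref{eq:bz-ring} are the monomials $\Delta_1\cdots\Delta_k\in\Z[\widetilde{\seg}]$, i.e.\ parabolic inductions of segment representations. Their images under $\mathcal{F}_{N,n}$ are, by Theorem \ref{thm:commute-qasw}, the tensor products $P_N(\Delta_1)\cdots P_N(\Delta_k)$ of fundamental modules (or zero, or trivial factors), which are the standard modules on the quantum affine side.

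The plan is to compute both sides explicitly on standard modules and compare. On the Hecke side, $\ch^{\otimes}$ is defined through multiplicities in parabolic restriction (Jacquet modules). By the Bernstein--Zelevinsky geometric lemma, the restriction of $\Delta_1\times\cdots\times\Delta_k$ to a Levi of type $(n_1,\dots,n_r)$ is filtered by inductions of products of sub-segments obtained by cutting each $\Delta_i$ into consecutive pieces. Counting how often a given product of segment modules $\Delta'_1\otimes\cdots\otimes\Delta'_r$ appears reduces to counting fillings of an array: rows indexed by the $\Delta_i$, columns by the $\Delta'_j$, with the entries forming consecutive decompositions. This is a tableau-counting formula for the coefficient of each monomial in $\ch^{\otimes}(\Delta_1\cdots\Delta_k)$.

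On the quantum affine side, the $q$-character is multiplicative, so $\chi^N(\prod Y(k_i+1,a_i))=\prod\chi^N(Y(k_i+1,a_i))$. The fundamental $q$-characters in type $A$ are given by column semistandard tableaux (Frenkel--Reshetikhin / Nakajima). The dominant part is then the sum over tuples of tableaux whose product of monomials has all exponents nonnegative. I will show that such dominant products decompose uniquely as products of $Y(j,b)=P_N([bv^{-j+1},bv^{j-1}])$, and that these data are in bijection with the arrays counted on the Hecke side. The bijection should match each column tableau's $A^{-1}$-cancellations with the cut points of a segment. Applying $P_N$ to the Hecke-side count kills exactly the terms containing segments of length exceeding $N+1$ and sets length-$(N+1)$ segments to $1$. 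This should correspond to the truncation from tableaux of height at most $N$ together with the quantum determinant being trivial.

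The main obstacle is this last combinatorial bijection, namely showing that the Jacquet-module counts survive $P_N$ exactly as the dominant tableau products. The delicate point is that $P_N$ is not injective on monomials, so distinct Hecke terms collapse, and dominance on the quantum side involves cancellations between different factors. I would first treat the generic case, where all segments lie on pairwise unlinked lines, and verify the formula for a single segment: there $\ch^{\otimes}(\Delta)$ should be $\sum$ over decompositions of $\Delta$ into juxtaposed sub-segments, matching the dominant monomials of a product-free KR-type analysis. I would then run an induction on the number of segments, using the geometric lemma on one side and multiplicativity of $\chi^N$ on the other.

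If the direct bijection proves unwieldy, a fallback is to first establish Theorem \ref{thm:introC}: identify $\ch^{\otimes}$ with the $q=1$ monomial-basis coordinate map under the categorification of $U(\mathfrak{sl}_\infty)^+$. One would then invoke Nakajima's result, that dominant monomials give monomial-basis coordinates, in its specialization compatible with $P_N$.
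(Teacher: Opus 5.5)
Your reduction to standard modules is right, and so are your two ingredients: Mackey theory on the Hecke side, and Nakajima's tableau formula for fundamental $q$-characters together with Theorem \ref{thm:commute-qasw} on the quantum affine side. There is a genuine gap, though. The step that actually proves the theorem, identifying $\ch^\otimes(\zeta(\m))$ with the dominant part, is only announced, and the route you sketch for it does not close.

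First, your Hecke-side count is set up wrongly. The indicator module $\zeta(\n)^\otimes$ has one Levi block per end point $d\in E(\n)$, in increasing order, and each block carries the whole right-aligned piece $\zeta(\n[d])$; blocks are not indexed by individual segments. A Mackey term is $M(\m_{\mathcal{Q},1})\boxtimes\cdots\boxtimes M(\m_{\mathcal{Q},s})$, where each factor is a generally reducible product that need not equal $\n[d_e]$. So what you must count is $[M(\m_{\mathcal{Q},e}):\zeta(\n[d_e])]$. This number is $0$ or $1$, and it equals $1$ exactly when $\m_{\mathcal{Q},e}^\circ=\n[d_e]$. Proving this uses $[M(\mathfrak{k}):Z(\mathfrak{k}^\circ)]=1$ and the uniqueness of spherical multisegments with given support.

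Your single-segment check exposes the misreading. Restricting $Z(\Delta)$ produces pieces with \emph{decreasing} end points, while indicator blocks need increasing ones. Hence $\ch^\otimes(Z(\Delta))=\exp(\Delta)$, not a sum over decompositions. This agrees with a fundamental module having a single dominant monomial.

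Second, the matching itself. Induction on the number of segments cannot work as stated, because neither $\chi^N_+$ nor $\ch^\otimes$ is multiplicative. New dominant monomials arise when negative exponents in one factor cancel positive ones from another. The missing idea is a global bijection, which has three parts:
\begin{enumerate}
\item Encode the whole product by bitableaux, one row per segment.
\item Characterize both dominance of a $q$-character term and the right-alignment condition on Mackey pieces by the existence of an injective ``connection''. This connection pairs each subtracted entry $(a_{j-1},b_j)$ with an equal entry.
\item Show that dominance forces every $b$-entry into $E(\m)$ (and every $a$-entry into $D(\m)$). Then replacing $b$ by its block index and dropping the trivial box $(y_i+1,y_i)$ gives a bijection that preserves the resulting multisegment.
\end{enumerate}

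The collapse under $P_N$ that worries you is not handled by a finite-$N$ bijection. Instead, prove the identity at $N=\infty$, using a limit $q$-character on $\Z[\seg]$. All of its monomials are leading-term dominant, meaning the longest segment occurs with positive exponent. Therefore truncating to lengths $\le N+1$ and setting length-$(N+1)$ segments to $1$ commutes with taking dominant parts, and it no longer matters that distinct terms collapse under $P_N$.

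Finally, your fallback is circular. No theorem of Nakajima identifies dominant monomials with monomial-basis coordinates; that is only an expectation. Theorem \ref{thm:introC} establishes it, and it is deduced \emph{from} Theorem \ref{thm:Aintro} together with Theorem \ref{thm:introB}.
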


\subsection{The reciprocal character}

The notion of a character for affine Hecke algebra modules is multifaceted. In some strict senses, a character would amount to recording generalized eigenspace multiplicities (see \cite[Corollary 3.15]{antor-okada}). While this invariant plays a prominent role in our discussion (the map $\ch$ in Section \ref{sect:ch}), it also contains a significant amount of redundancy and is not directly suited for comparison with the quantum affine $q$-character.


When moving by standard categorical equivalences from a $H_{n,v}$-module to a smooth $\mathrm{GL}_n(\mathbb{Q}_{v^2})$-representation (available when $v^2$ is a prime integer), local Langlands reciprocity places the affine Hecke setting in close analogy with classical highest-weight theory, where character-theoretic questions are governed by the transition coefficients between standard and simple modules.


\subsubsection{Langlands quotient}
To each monomial $\m\in \Z_{\geq0}^{\widetilde{\seg}}$ in $\Z\left[\widetilde{\seg}\right]$, that is a \textit{multisegment}, Zelevinsky constructs a standard $H_{n_{\m},v}$-module $\zeta(\m)$. These modules form a $\Z$-basis for $\mathbf{R}$ and realize the isomorphism in \eqref{eq:bz-ring}. Each of them admits a unique simple quotient $Z(\m)$, and the resulting correspondence constitutes the cornerstone Zelevinsky classification of simple $H_{n,v}$-modules.

Since standard modules are comparatively well understood and readily constructed, character theory often boils down to understanding the expansion 
\[
[V] = \sum_{\m\in \Z_{\geq0}^{\widetilde{\seg}}\;:\; n_{\m}=n} m(V,\m) \cdot [\zeta(\m)]\in \mathbf{R}
\]
of a given $H_{n,v}$-module $V$ in terms of the standard basis. Such expansions, and in particular the integer coefficients $m(Z(\m),\zeta(\n))$, are known to admit geometric descriptions and are given by values (at $1$) of Kazhdan--Lusztig polynomials of finite type $A$.

\subsubsection{Indicator modules}

Let us recount that a composition $n= n_1+\ldots + n_r$ gives a parabolic embedding $H_{\underline{n},v}:=H_{n_1,v}\otimes \cdots\otimes H_{n_r,v}\hookrightarrow H_{n,v}$ of algebras, together with a corresponding adjoint pair of induction and restriction functors $\iind_{\underline{n}},\; \rres_{\underline{n}}$ between module categories.

For each standard module, we present (Section \ref{sect:indicators}, complemented by Proposition \ref{prop:non-canon-ch}) a straightforward recipe for a simple \textit{indicator module} $\zeta(\m)^{\otimes}$ over a suitable parabolic subalgebra $H_{\underline{n_{\m}},v}<H_{n_{\m},v}$, so that 
\[
\zeta(\m) = \iind_{\underline{n_{\m}}}(\zeta(\m)^{\otimes})
\]
holds, for each multisegment $\m$. 

We consequently define the reciprocal character of a $H_{n,v}$-module $V$ to be the polynomial
\begin{equation}\label{eq:rec-ch}
\ch^\otimes(V) = \sum_{\m\in \Z_{\geq0}^{\widetilde{\seg}}\;:\; n_{\m}=n} \left[ \rres_{\underline{n_{\m}}}(V)\,:\, \zeta(\m)^{\otimes}\right]\cdot \m \in \Z\left[\widetilde{\seg}\right]\;,
\end{equation}
which counts the Jordan--H\"older multiplicities of indicator modules inside the (possibly non semi-simple) parabolic restrictions of $V$.

The information encoded in the reciprocal character of a module has been immensely valuable in previous works of the first author \cite{gur-qinv,gur-jlms}, where it was used to analyze subtle features of the monoidal structure in $\mathbf{R}$. When transported to the setting of $p$-adic groups, restriction functors correspond to Jacquet functors, while the indicator representations considered in \cite[Definition 3.1]{gur-decomp} match the indicator modules appearing in our framework. The concept of indicator representations has recently featured \cite[Remark 3.6]{atobe-ming25} in the analysis of the unitary dual of reductive $p$-adic groups of classical type. 

It has also been established (see, e.g., \cite{gur-jems}) that the monoidality of the Schur--Weyl duality functors $\mathcal{F}_{N,n}$ provides an effective bridge for importing results from quantum affine representation theory - often derived using the $q$-character invariant - into spectral problems in the domain of $p$-adic groups. From this perspective, Theorem~\ref{thm:Aintro} may be viewed as a neat completion of this circle of ideas.

\subsection{Finite quantum group perspective}

In classical Lie theory, the positive part of the universal enveloping algebra $U(\mathfrak{sl}_k)^+$ is realized as a quotient of the free associative algebra $R_k$ on the simple $\mathfrak{sl}_k$-roots, modulo Serre relations. In the Drinfeld--Jimbo quantum group setting (Section \ref{sect:qgroups}), this picture deforms to a quotient $\mathbb{C}(q)\otimes R_k\twoheadrightarrow U_q(\mathfrak{sl}_k)^+$ by the quantum Serre relations that involve the formal parameter $q$. 


We recall some of the essentials of Lusztig’s theory of the canonical basis, for the case of $U_q(\mathfrak{sl}_{\infty})^+ = \varinjlim_k U_q(\mathfrak{sl}_k)^+$, ignoring routine issues related to the limit.

In this setting, let $R(q)$ denote the free $\mathbb{Q}(q)$-algebra on a countable set of generators. Thus, $R(q)=\mathrm{span}_{\mathbb{Q}(q)}\{w\}_{w\in \mathcal{W}}$ is an algebra spanned by a basis of words $\mathcal{W}$ in a countable alphabet $\{\alpha_i\}_{i\in \Z}$, with multiplication given by concatenation. A distinguished sub-$\mathbb{Z}[q,q^{-1}]$-algebra $R[q] \subset R(q)$ is generated by the divided power normalizations $w' = \kappa_w^{-1} w$ (see Section \ref{sect:qgroups-word}), for each $w \in \mathcal{W}$.

The quotient $R[q]\twoheadrightarrow \mathbf{f}$ by the ideal generated by the quantum Serre relations \eqref{eq:quantum-serre} (of type $A_\infty$) is known as \textit{Lusztig’s algebra}.

This resulting $\Z[q,q^{-1}]$-free module $\mathbf{f}$, together with its dual module $\mathbf{f}^\ast$, forms the natural setting for geometric categorifications and for the axiomatic construction of the canonical basis $\mathscr{B}\subset \mathbf{f}$. This basis also specializes to a $\mathbb{Z}$-basis of the abelian group 
\[
(\mathscr{B})_{q\to1}\subset (\mathbf{f})_{q\to 1}:= \mathbf{f}\otimes_{\Z[q,q^{-1}]} \mathrm{ev}_1\;,
\]
where $\mathrm{ev}_1=\Z$ is the $q=1$ evaluation $\Z[q,q^{-1}]$-module.

\subsubsection{Monomial basis}

One avenue of interest in this framework is a description of subsets $\mathcal{M} \subset \mathcal{W}$, for which $\{w'\}_{w \in \mathcal{M}}$ projects to a $\mathbb{Z}[q,q^{-1}]$-basis of $\mathbf{f}$. Such bases are called monomial bases, reflecting the interpretation of words as monomials in $R(q)$.

A systematic construction of monomial bases, depending on a choice of reduced decomposition of the longest Weyl group element, was already given in \cite[Section 7.8]{lusztig-canon}, where it played a key role in the construction of the canonical $\mathscr{B}$. This monomial construction was later formalized by Reineke \cite{reineke-fei}, who clarified its quiver-theoretic origins and its tight connection to the Feigin map realization of quantum groups (\cite{ioh-mal,joseph-feigin}).


Among subsequent works, \cite{Antor23} further explicated an algorithmic approach in which monomial bases serve as an intermediate step between $\mathscr{B}$ and PBW-type bases of $\mathbf{f}$. In our context it is notable that Nakajima \cite{nakajima-annals} interpreted bases of simple and standard modules in quantum affine Grothendieck rings, such as $G[N]$, as an analogous basis transition problem, where the $q$-character map took the role of the intermediary.


For our purposes, we fix a single choice of a monomial basis, given by a fixed collection of words $\mathcal{M} = \{w(\m)\}_{\m} \subset \mathcal{W}$. 

Those words are indexed by integral multisegments, that is, by monomials $\m\in \Z_{\geq0}^{\seg(1)}$ in a subring $\mathbb{Z}[\seg(1)]<\Z\left[ \widetilde{\seg}\right]$, where $\seg(1)=\{[x,y]: x,y\in v^{2\Z}\}\subset \widetilde{\seg}$. Indeed, the integral segments naturally correspond to positive roots for the $A_{\infty}$ Cartan datum.

Notably, our choice of a monomial basis traces back to the Berenstein--Zelevinsky treatment \cite[Corollary 3.6(b)]{MR1237826} of string bases in quantum groups. 

The projection of (the normalizations of) $\mathcal{M}$ provides a $\Z[q,q^{-1}]$-basis $\mathscr{M}=\{m_{\m}\}_{\m\in \Z_{\geq0}^{\seg(1)}}$ for $\mathbf{f}$, and thus a dual monomial $\Z[q,q^{-1}]$-basis $\{m_{\m}^\ast\}_{\m\in \Z_{\geq0}^{\seg(1)}}$ for $\mathbf{f}^\ast$. We take note of the resulting coordinate map
\[
C_{\mathscr{M}}: \mathbf{f}^\ast  \overset{\sim}{\rightarrow} \Z[q,q^{-1}]\otimes\Z[\seg(1)]\;,\quad C_{\mathscr{M}}(m_{\m}^\ast) = 1\otimes\m\;,\forall\m \in \Z_{\geq0}^{\seg(1)}\;.
\]

\subsubsection{Affine Hecke categorification}
Another key feature in type $A$ studies is the categorification of the $q=1$ specialization of Lusztig’s algebra via the representation theory of $H_{n,v}$. This phenomenon stems from underlying geometric parallels between Schubert varieties and quiver representation varieties, and was established through different approaches by Ariki \cite{ariki-decomp} or Grojnowski \cite{grojn}. An additional perspective is provided by quiver Hecke algebra categorifications (\cite{kl09,rouq12,vv11}) and their identification with affine Hecke algebras in type $A$ (see \cite[Section 3.2.1]{gur-jlms}).

This categorification is encapsulated by a canonical injection
\begin{equation}\label{eq:categor}
\Phi: (\mathbf{f}^\ast)_{q\to1} \hookrightarrow \mathbf{R}
\end{equation}
from the specialization $(\mathbf{f}^\ast)_{q\to1} =\mathbf{f}^\ast\otimes_{\Z[q,q^{-1}]}\mathrm{ev}_1$ of the $\mathbb{Z}[q,q^{-1}]$-module into the Bernstein–Zelevinsky ring. The map $\Phi$ enjoys several favorable properties: It identifies the specialization of the dual canonical basis $(\mathscr{B}^\ast)_{q\to1}\subset (\mathbf{f}^\ast)_{q\to1} $ with the isomorphism classes of simple modules in the integral blocks of $\mathbf{R}$. Likewise, it sends a specialization of the dual of a certain PBW-type basis $\mathscr{E} = \{e_{\m}\}\subset \mathbf{f}$ (see Section \ref{sect:pbw}), again indexed by integral multisegments $\m\in \Z_{\geq0}^{\seg(1)}$, to the set of integral standard modules, i.e. $\Phi(e_{\m}^\ast\otimes 1) = [\zeta(\m)]$.

Furthermore, $\Phi$ is multiplicative with respect to a natural ring structure on $\mathbf{f}^\ast$, although this feature will not be used in our treatment.


\begin{theoremintro}\label{thm:introB}[Theorem \ref{thm:rec-ch}]
When specialized at $q=1$, the coordinate map $C_{\mathscr{M}}$ with respect to the dual monomial basis for $\mathbf{f}^\ast$ coincides with the reciprocal character map for affine Hecke algebras, under the canonical categorification of Lusztig's algebra.

 Namely, the diagram   
 \[
\begin{diagram}
\dgARROWLENGTH=2em
\node{ (\mathbf{f}^\ast)_{q\to 1} } \arrow{s,t}{\Phi }  \arrow{se,t}{ (C_{\mathscr{M}})_{q\to1}}\\
\node{\mathbf{R} } \arrow{e,t}{ \Large \ch^\otimes } \node{ \Z\left[\widetilde{\seg}\right]  }
\end{diagram}\;
\]
commutes.

In concrete terms, whenever a $H_{n,v}$-module $V$ corresponds $[V]= \Phi(a_V\otimes 1)$ to the specialization of a module homomorphism $a_V: \mathbf{f}\to \Z[q,q^{-1}]$ under the categorification map \eqref{eq:categor}, we have
\[
\ch^\otimes(V) = \sum_{\m\in \Z_{\geq0}^{\seg}} a_V(m_{\m})(1) \cdot \m\in \Z[\seg(1)]\;.
\]


\end{theoremintro}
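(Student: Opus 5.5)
Both sides of the claimed identity are additive in $a_V$. The plan is therefore to reduce to a statement about words and restriction functors, and then to identify the monomial basis element $m_{\m}$ with the indicator module $\zeta(\m)^{\otimes}$. Under the categorification $\Phi$, the pairing of $\mathbf{f}^\ast$ against a word (in divided-power normalization) $w'=\alpha_{i_1}^{(a_1)}\cdots\alpha_{i_r}^{(a_r)}$ corresponds at $q=1$ to a Jordan--H\"older multiplicity in a parabolic restriction. Concretely, for $[V]=\Phi(a_V\otimes 1)$ one has
\[
a_V(w')(1) = \bigl[\rres_{(a_1,\ldots,a_r)}(V) : L(v^{2i_1};a_1)\otimes\cdots\otimes L(v^{2i_r};a_r)\bigr],
\]
where $L(x;a)$ is the unique simple $H_{a,v}$-module on which all Jucys--Murphy/Bernstein generators act with the single generalized eigenvalue $x$. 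This is the standard character formula of Grojnowski/Ariki, and also the shuffle/quiver Hecke description of the dual of the free algebra: the coproduct is dual to restriction, and the divided powers correspond to the one-dimensional-up-to-multiplicity simple modules of $H_{a,v}$ with all eigenvalues equal. I would first record this as a lemma, checking it on words via the transitivity of restriction and $\dim L(x;a)$ versus $[a]_q!$. This is exactly where the normalization $\kappa_w$ enters.

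Second, I would compare the word $w(\m)$ with the indicator module. The monomial basis words $w(\m)$ of Berenstein--Zelevinsky type are built by reading the multisegment $\m$ in a fixed order, grouping equal letters into divided powers. I would check directly from the recipe of Section \ref{sect:indicators} that $\zeta(\m)^{\otimes}$, as a module over $H_{\underline{n_{\m}},v}$, is a tensor product of modules $L(x_j;a_j)$, with the same blocks $(x_j,a_j)$ as the letters of $w(\m)$. Proposition \ref{prop:non-canon-ch} should provide the freedom to pick the indicator compatible with the chosen word order, since it says $\iind(\zeta(\m)^{\otimes})=\zeta(\m)$ regardless of admissible choices. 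Combining with the lemma gives
\[
a_V(m_{\m})(1)=[\rres_{\underline{n_{\m}}}(V):\zeta(\m)^{\otimes}],
\]
which is exactly the coefficient of $\m$ in \eqref{eq:rec-ch}. The diagram then commutes on all of $(\mathbf{f}^\ast)_{q\to1}$ by linearity, since both maps are additive.

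The main obstacle is the second step: matching the specific monomial basis words $w(\m)$ with the indicator modules letter by letter, including the multiplicities $a_j$. This matters because different monomial bases give different coordinate maps, so the statement is sensitive to the exact choice. I would attempt it by induction on the number of segments, peeling off the segment with extremal endpoint, mirroring the inductive definition of $w(\m)$ in \cite{MR1237826}. A sanity check is that $\ch^\otimes(\zeta(\m))$ is unitriangular, which must match the triangularity of the pairing between $\mathscr{E}$ and $\mathscr{M}$. If the letter-level matching fails, a fallback is to compare both sides on the standard basis: prove $\langle e^\ast_{\n},m_{\m}\rangle(1)=[\rres_{\underline{n_{\m}}}\zeta(\n):\zeta(\m)^\otimes]$ via the Mackey/geometric lemma on both sides.
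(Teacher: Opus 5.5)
Your Step 1 is sound and matches what the paper uses. By Theorem \ref{thm:ch-quant}, $a_V(\overline{w'})(1)=[V:w]/\kappa_w(1)$. The only simple module with central character $a\alpha_i$ is $Z([i,i])^{\times a}$, whose character is $a!\,\alpha_i^a$. Hence this equals the multiplicity of $\boxtimes_j L(v^{2i_j};a_j)$ in the restriction to the letter-block composition.

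\textbf{The gap is Step 2, which is false as stated.} The indicator module $\zeta(\m)^{\otimes}=\zeta(\m[d_1])\boxtimes\cdots\boxtimes\zeta(\m[d_l])$ lives over $\underline{n_{\m}}=(|\beta^{\m}_1|,\ldots,|\beta^{\m}_l|)$. This composition is strictly coarser than the letter-block composition of $w(\m)=w_{\beta^{\m}_1}\cdots w_{\beta^{\m}_l}$ as soon as $\m$ contains a segment of length at least two. Each factor $\zeta(\m[d_i])=Z(\m_{\beta^{\m}_i})$ is a spherical module, and it is a single-eigenvalue module $L(x;a)$ only when $\m[d_i]$ consists of copies of $[d_i,d_i]$. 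Already for $\m=[0,1]$ you have $\zeta(\m)^{\otimes}=Z([0,1])$ over $H_2$, while $w(\m)=\alpha_1\alpha_0$ has two blocks of size one.

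Proposition \ref{prop:non-canon-ch} does not give you freedom to refine. It only regroups factors from distinct cosets of $\C^\times/\langle v^2\rangle$. Inside $\mathbf{R}_1$, refining breaks the induction property: $\iind_{(1,1)}(Z([1,1])\boxtimes Z([0,0]))$ is two-dimensional, so it is not $\zeta([0,1])$. The induction you planned for the ``main obstacle'' therefore targets a statement that does not hold.

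What is actually needed is a comparison between word multiplicities and the coarse restriction multiplicity. By \eqref{eq:word-concat} for the composition $\underline{n_{\m}}$,
\[
[V:w(\m)]=\sum_{L_1,\ldots,L_l}[\rres_{\underline{n_{\m}}}(V):L_1\boxtimes\cdots\boxtimes L_l]\prod_{i}[L_i:w_{\beta^{\m}_i}]\;.
\]
So for a simple $L$ with central character $\beta$ you must show two things. First, $[L:w_\beta]=0$ unless $L\cong Z(\m_\beta)$. Second, $[Z(\m_\beta):w_\beta]=\kappa_{w_\beta}(1)$. You also need $\kappa_{w(\m)}=\prod_i\kappa_{w_{\beta^{\m}_i}}$, which holds because consecutive descending blocks cannot merge.

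The vanishing is the real representation-theoretic input; it is Proposition \ref{prop:descending-spherical}.
\begin{itemize}
\item If $[L:w_\beta]\neq0$, Frobenius reciprocity makes $L$ a quotient of $\iind(\chi_{w_\beta})$.
\item A symmetry of induction then makes $L$ a submodule of $\iind(\chi_{w_\beta^\vee})$.
\item Since $w_\beta$ is descending, $\iind(\chi_{w_\beta^\vee})$ is a standard module.
\item Standard modules have the unique simple submodule $Z(\m^\circ)$ by Lemma \ref{lemma:m_circ}.
\end{itemize}

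The count is where your peeling induction belongs. Write $w_\beta=\alpha_b^s w_\gamma$ and use Mackey theory to get $[M(\m_\beta):w_\beta]=s!\,[M(\m_\gamma):w_\gamma]$.

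Your fallback runs into the same issue. By Theorem \ref{thm:ch-quant} and Proposition \ref{prop:kl}, $e^\ast_{\n}(m_{\m})(1)=[\zeta(\n):w(\m)]/\kappa_{w(\m)}(1)$. Matching this with $[\rres_{\underline{n_{\m}}}(\zeta(\n)):\zeta(\m)^{\otimes}]$ requires exactly the same word-versus-indicator identity.
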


The combination of Theorems \ref{thm:Aintro} and \ref{thm:introB} provides the following direct link between the dominant part of the $q$-character and a monomial basis in Lusztig's algebra. For sake of exposition, let us formulate the corollary in a form that is independent of the Schur--Weyl duality functor construction.

\begin{theoremintro}\label{thm:introC}

Let $W$ be either a simple, or a standard, finite-dimensional type $1$ module over $U_{v}\!\left(\widehat{\mathfrak{sl}_{N+1}}\right)$, and $\m_0\in \Z_{\geq0}^{\seg(1)}$ a choice of an integral multisegment, for which $P_N(\m_0)\in \Z_{\geq0}^{\mathscr{D}_N}$ is the parameterizing Drinfeld polynomial of $W$.

Let $b_0\in \mathscr{B}\subset\mathbf{f}$ be the canonical basis element, whose dual $b_0^\ast: \mathbf{f}\to \Z[q,q^{-1}]$ satisfies $\Phi(b_0^\ast\otimes 1) = Z(\m_0)$ through the Zelevinsky classification and the categorifcation map \eqref{eq:categor}.

Let us write the full transition matrices of Laurent polynomials 
\[
h_{\m,b}, s_{\m,\n}\in \Z[q,q^{-1}]\;, \mbox{ with } \;m_{\m} = \sum_{b\in \mathscr{B}} h_{\m,b}\,b=\sum_{\n\in \Z_{\geq0}^{\seg(1)}} s_{\m,\n}\,e_{\n}\in \mathbf{f}\;,\forall \m \in \Z_{\geq0}^{\seg(1)}\;,
\]

between the monomial $\mathscr{M}$, canonical $\mathscr{B}$ and PBW-type $\mathscr{E}$ bases of the free module $\mathbf{f}$.

Then, 
\begin{equation}\label{eq:C1}
\chi^N_+(W) = \left\{ \begin{array}{lc}\sum_{\m\in \Z_{\geq0}^{\seg(1)}} h_{\m,b_0}(1) \cdot P_N(\m) & \mbox{simple } W \\
\sum_{\m\in \Z_{\geq0}^{\seg(1)}} s_{\m,\m_0}(1) \cdot P_N(\m) & \mbox{standard } W
\end{array}\right.\;\in \Z[\mathscr{D}_N]
\end{equation}
holds.

Moreover, whenever a multisegment $\m \in \Z_{\geq0}^{\seg(1)}$ has $P_N(\m)\neq0$, the formula
\begin{equation}\label{eq:C2}
\dim(W_{P_N(\m)}) =\left\{ \begin{array}{lc} h_{\m,b_0}(1)  & \mbox{simple } W \\
 s_{\m,\m_0}(1)  & \mbox{standard } W
\end{array}\right.\;,\;\forall \m\in\Z_{\geq0}^{\seg(1)}
\end{equation}
describes the dimensions of all dominant generalized eigenspaces that are counted by the $q$-character invariant of $W$.

\end{theoremintro}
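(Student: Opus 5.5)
The plan is to combine Theorem \ref{thm:Aintro} and Theorem \ref{thm:introB}, using Schur--Weyl duality to realize $W$ on the Hecke side.

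\textbf{Step 1: realize $W$ on the Hecke side.} Set $V=Z(\m_0)$ when $W$ is simple and $V=\zeta(\m_0)$ when $W$ is standard. By Theorem \ref{thm:commute-qasw}, $P_N$ agrees with the semisimplification of $\bigoplus_n\mathcal{F}_{N,n}$. Since $\m_0$ is integral, $n=n_{\m_0}$ is large enough for the duality to apply. The standard facts for $\mathcal{F}_{N,n}$ then show that $\mathcal{F}_{N,n}(V)$ is the simple, respectively standard, module with Drinfeld polynomial $P_N(\m_0)$. If $\m_0$ contains segments of length $N+1$, which $P_N$ sends to $1$, the trivial factors must be absorbed. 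If it contains longer segments, $P_N(\m_0)=0$, which is excluded by hypothesis.

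One caveat concerns the simple case. The functor sends $Z(\m_0)$ either to the simple module or to zero, so nonvanishing must be checked. I would use the classical criterion of Chari--Pressley: $\mathcal{F}_{N,n}(Z(\m))\neq 0$ exactly when $\m$ has no segments of length greater than $N+1$. The hypothesis that $P_N(\m_0)$ is the Drinfeld polynomial of $W$ gives this. Because $\chi^N_+$ and every term $P_N(\m)$ depend only on classes in $G[N]$, it suffices to match classes: $[W]=[\mathcal{F}_{N,n}(V)]$.

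\textbf{Step 2: pass to the monomial basis.} Theorem \ref{thm:Aintro} gives $\chi^N_+(W)=P_N(\ch^\otimes(V))$. Now $[V]=\Phi(a_V\otimes1)$, where $a_V=b_0^\ast$ in the simple case and $a_V=e_{\m_0}^\ast$ in the standard case; these are the stated properties of $\Phi$. Theorem \ref{thm:introB} then gives $\ch^\otimes(V)=\sum_{\m}a_V(m_{\m})(1)\cdot\m$. Expanding $m_{\m}=\sum_b h_{\m,b}\,b$ and pairing with the dual basis gives $b_0^\ast(m_{\m})=h_{\m,b_0}$. Likewise $e_{\m_0}^\ast(m_{\m})=s_{\m,\m_0}$. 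Applying $P_N$ yields \eqref{eq:C1}.

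\textbf{Step 3: read off coefficients.} For \eqref{eq:C2}, the coefficient of the monomial $P_N(\m)$ in $\chi^N_+(W)$ is $\dim W_{P_N(\m)}$. In \eqref{eq:C1}, however, several multisegments $\m'$ may have $P_N(\m')=P_N(\m)$. This happens through segments of length $N+1$, which are sent to $1$, and through any $\m'$ with the same short segments. I expect this to be the main obstacle.

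To handle it, I would show that all but one of the colliding terms vanish. Only terms with $n_{\m'}=n$ appear, so the total degree is fixed. The segments of length $\le N$ are determined by the monomial $P_N(\m)$, and the remaining degree must be filled by $\{[av^{-N},av^{N}]\}$-segments. Their spectral parameters are still free, so collisions are not ruled out by degree alone.

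I would therefore use weight or central-character constraints. The generalized central character of $V$ fixes the multiset of endpoints (the cuspidal support) of any $\m'$ with $[\rres:\zeta(\m')^\otimes]\neq0$. Given the short segments, this support determines the multiset of cuspidal points left over for the length-$(N+1)$ segments. I would then argue that these remaining points decompose into length-$(N+1)$ segments uniquely, which is plausible because the indicator-module recipe of Section \ref{sect:indicators} orders the segments. If that uniqueness fails, I would fall back on reading \eqref{eq:C2} as a sum over the fiber of $P_N$, and prove that at most one fiber element carries a nonzero coefficient using the tableau formulas of Theorem \ref{thm:introD}.
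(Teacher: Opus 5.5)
Your proposal follows the paper's proof. Step 2 is exactly Corollary \ref{cor:last}, i.e.\ Theorems \ref{thm:Aintro} and \ref{thm:introB} with $a_V=e_{\m_0}^\ast$ or $a_V=b_0^\ast$. For \eqref{eq:C2} the paper also combines a support constraint with a uniqueness statement. It phrases the constraint as homogeneity of the transition matrices in $\mathbf{f}$, namely $e_{\m_0}^\ast(m_{\n})=b_0^\ast(m_{\n})=0$ unless $\supp(\n)=\supp(\m_0)$. This is equivalent to your central-character argument via Remark \ref{rem:chars-cent}. Your caveat about $\mathcal{F}_{N,n}(Z(\m_0))\neq0$ in the simple case is a point the paper treats only briefly, and the nonvanishing criterion you invoke is correct and settles it.

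Two things need fixing in Step 3.

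\textbf{The uniqueness step.} What you call ``plausible'' is the paper's closing lemma. It has nothing to do with the ordering in the indicator recipe, so the fallback to Theorem \ref{thm:introD} is unnecessary. Suppose $\mathfrak{r},\mathfrak{r}'$ consist of segments of length $N+1$ and $\supp(\mathfrak{r})=\supp(\mathfrak{r}')$. Let $d_0$ be the largest point of this support and $r_0$ its multiplicity. Every segment containing $d_0$ must end at $d_0$, hence equals $[d_0-N,d_0]$. So $\mathfrak{r}[d_0]=r_0[d_0-N,d_0]=\mathfrak{r}'[d_0]$, and induction on the height finishes.

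\textbf{The scope of \eqref{eq:C2}.} Your argument shows that the only element of the fibre of $P_N$ that can contribute is the one with support $\supp(\m_0)$. That element equals $\m$ only when $\supp(\m)=\supp(\m_0)$, so \eqref{eq:C2} must be read with this restriction. Without it the formula fails. For example, $\m=\m_0+[c-N,c]$ has $P_N(\m)=P_N(\m_0)$, hence $\dim W_{P_N(\m)}=1$. Yet $h_{\m,b_0}=s_{\m,\m_0}=0$ for degree reasons. The paper's proof carries the same implicit restriction, since its lemma assumes equal supports.
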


Let us remark that the integrality condition on the multisegment $\m_0$ in the formulation of Theorem \ref{thm:introC} is imposed for clarity of statement and entails no essential loss of generality. Indeed, any type~$1$ $U_{v}\!\left(\widehat{\mathfrak{sl}_{N+1}}\right)$-module, together with its $q$-character, admits a standard decomposition (Section \ref{sect:qa-integral}) into a tensor product of its integral components.

We finally point out that the identity in \eqref{eq:C1} admits a natural
deformation on both sides. On the quantum affine side, the $q$-character is refined by the so-called $q,t$-character,\footnote{As before, we follow the standard terminology, although the ``$t$-part'' of the invariant corresponds to the parameter denoted by $q$ in our notation.} whose dominant part lies in
$\Z[\mathscr{D}_N,q,q^{-1}]$. On the quantum group side, the transition
coefficients appearing in Theorem \ref{thm:introC} may be retained before
specialization at $q=1$. Thus, it may be conjectured that the present result is a $q=1$ shadow of a richer compatibility between these two deformations.

This perspective also suggests deformed counterparts of Theorems
\ref{thm:Aintro} and \ref{thm:introB}. A natural framework for such questions
may be provided by the graded quiver Hecke algebras and their Schur--Weyl duality functors,
as in \cite{kkk-inentiones}.

A different perspective on the same quantum affine Grothendieck rings is provided by the cluster-algebra approach of Hernandez--Leclerc \cite{HL10,HL16}.  The work of Casbi--Li \cite{CL23}, for example, studies the $q$-character through this celebrated categorification. The relations between the cluster-theoretic picture and the monomial-basis coordinates studied in the present work are a frontier for exploration.

\subsubsection{Example}

We illustrate Theorem \ref{thm:introC} on an example case that often features in the literature, as in \cite[Section 11.4]{zel-main}, \cite[Example 4]{lnt}, \cite[Example 5.2]{Antor23}.

We consider the $\Z[q,q^{-1}]$-module 
\[
\Lambda= \mathrm{span}_{\Z[q,q^{-1}]}\{\overline{w}_j\}_{j=1}^5<\mathbf{f}
\]
that is generated by the projections of the $5$ elements 
\[
w_1= w[1223], w_2= w[2123], w_3 = w[2213], w_4 = w[1232], w_5 =w[1322]\in R[q]\;.
\] 
Here $w[i_1\ldots i_t] = (\alpha_{i_1}\cdots\alpha_{i_t})'\in R(q)$ denotes the divided power normalizations of the corresponding words.

The quantum Serre relations imply $\overline{w_2} = \overline{w_1} + \overline{w}_3$, and $\overline{w_4} = \overline{w_1} + \overline{w}_5$, so that $\Lambda$ is free of rank $3$.

Now consider the multisegments
\[
\begin{array}{l}
\m = [v^{2\cdot1},v^{2\cdot1}] + 2[v^{2\cdot2},v^{2\cdot2}] + [v^{2\cdot3},v^{2\cdot3}]\;, \\
\m' = [v^{2\cdot1},v^{2\cdot2}] + [v^{2\cdot2},v^{2\cdot2}] + [v^{2\cdot3},v^{2\cdot3}]\;, \\
\m'' = [v^{2\cdot1},v^{2\cdot1}] + [v^{2\cdot2},v^{2\cdot2}] + [v^{2\cdot2},v^{2\cdot3}]
\end{array}\; \in \Z_{\geq0}^{\seg(1)}\;.
\]
The PBW basis elements
\[
\begin{array}{l}
e_{\m} = \overline{w[1223]} =  \overline{w}_1\;,\\
e_{\m'} = \left(\overline{w[2]}\cdot \overline{w[1]} - q^{-1}\overline{w[1]}\cdot\overline{w[2]} \right)\overline{w[23]} = \overline{w_2} - q^{-1}(q+q^{-1}) \overline{w_1}, \\
e_{\m''} = \overline{w[12]}\left(\overline{w[3]}\cdot \overline{w[2]} - q^{-1}\overline{w[2]}\cdot\overline{w[3]} \right) = \overline{w_4} - q^{-1}(q+q^{-1}) \overline{w_1}
\end{array}\;\in \mathscr{E}\;,
\]
constitute a basis for $\Lambda$. Subsequently, the elements
\[
\left\{\begin{array}{l}
b_{\m} = e_{\m} =  \overline{w_1}\;,\\
b_{\m'} =   e_{\m'}  + q^{-2}e_{\m} = \overline{w_2} - \overline{w_1} = \overline{w_3} , \\
b_{\m''} =   e_{\m''}  + q^{-2}e_{\m} = \overline{w_4} - \overline{w_1} = \overline{w_5} \\
\end{array}\right\}\; = \Lambda\cap \mathscr{B}\;,
\]
form the corresponding part of the canonical basis, while our chosen monomial basis gives
\[
\left\{\begin{array}{l}
m_{\m} = \overline{w_1}= b_{\m} = e_{\m}\;,\\
m_{\m'} =  \overline{w[221]}\cdot \overline{w[3]} = \overline{w_3} = b_{\m'} = e_{\m'}  + q^{-2}e_{\m} , \\
m_{\m''} =  \overline{w[1]}\cdot \overline{w[2]}\cdot \overline{w[32]}=  \overline{w_4} =   \overline{w_5} + \overline{w_1} =  b_{\m''}+ b_{\m} =   e_{\m''}+ (1+q^{-2})e_{\m} \\
\end{array}\right\}\; = \Lambda\cap \mathscr{M}\;.
\]

We now specialize to the quantum affine setting with $N=1$, where the map $P_1$ sends the above multisegments to the following Drinfeld monomials:
\[
P_1(\m) = Y(1,v^4)^2 Y(1,v^2) Y(1,v^6),\; P_1(\m') = Y(1,v^4) Y(1,v^6),\; P_1(\m'') = Y(1,v^4)Y(1,v^2)\in \Z[\mathscr{D}_1]\;.
\]
The corresponding standard modules $S, S',S''\in \mathcal{C}_1$ admit the $q$-characters
\[
\begin{array}{l}
\chi^{1}\bigl(S\bigr)
= \bigl(Y(1,v^4) + Y(1,v^6)^{-1}\bigr)^{\!2}\,
   \bigl(Y(1,v^2) + Y(1,v^4)^{-1}\bigr)\,
   \bigl(Y(1,v^6) + Y(1,v^8)^{-1}\bigr), \\
\chi^{1}\bigl(S'\bigr)
=  \bigl(Y(1,v^4) + Y(1,v^6)^{-1}\bigr)\,
   \bigl(Y(1,v^2) + Y(1,v^4)^{-1}\bigr), \\
\chi^{1}\bigl(S''\bigr)
= \bigl(Y(1,v^4) + Y(1,v^6)^{-1}\bigr)\,
   \bigl(Y(1,v^6) + Y(1,v^8)^{-1}\bigr)
\end{array}\; \in \Z[\mathscr{D}_1^{\pm1}]\;,
\]
which implies that their dominant parts are given as
\[
\begin{array}{ll}
\chi^{1}_+\bigl(S\bigr)&
= P_1(\m) + P_1(\m') + 2P_1( \m'') + 2= \\
 &=  s_{\m,\m}(1) P_1(\m) +  s_{\m',\m}(1) P_1(\m') +s_{\m'',\m}(1) P_1(\m'')+2   \;,\\
\chi^{1}_+\bigl(S'\bigr) &
=  P_1(\m') + 1 = s_{\m',\m'}(1) P_1(\m')+1\;,\\
\chi^{1}_+\bigl(S''\bigr)
&=  P_1(\m'') +1 = s_{\m'',\m''}(1) P_1(\m'')+1
\end{array}\; \in \Z[\mathscr{D}_1]\;.
\]

The constant terms in the dominant $q$-characters arise from the multisegment $\m'''= [v^{2\cdot1}, v^{2\cdot 2}]+ [v^{2\cdot2}, v^{2\cdot 3}]\in \Z_{\geq0}^{\seg}$, with $P_1(\m''')=1$, that provides the monomial basis element $m_{\m'''} = \overline{w[2132]}\in \mathbf{f}$, and the PBW basis element 
\[
\begin{array}{ll}
e_{\m'''} &= \left( \overline{w[21]} - q^{-1}\overline{w[12]} \right)\left( \overline{w[32]} - q^{-1}\overline{w[23]} \right)\\
&= m_{\m'''}- q^{-1}\overline{w_2} - q^{-1}\overline{w_4} + q^{-2}(q+q^{-1})\overline{w_1} \\
&= m_{\m'''} - q^{-1}e_{\m'}  -q^{-1}e_{\m''}  - (q^{-1}+ q^{-3})e_{\m}\;
\end{array}\;.
\]
Thus, $s_{\m''',\m''}(1) =s_{\m''',\m'}(1) =1$ and $s_{\m''',\m}(1)=2$ hold, and a full confirmation of the statement of Theorem \ref{thm:introC} is visible.

We also observe that the identity $m_{\m''} = b_{\m''}+b_{\m}$ implies, by Theorem \ref{thm:introC}, that the simple module $W=\mathcal{F}_N(Z(\m))\in \mathcal{C}_N$ is \textit{non-special}, in the sense of \cite[Definition 10.1]{nakajima-annals}: Besides its highest weight $P_N(\m)$, a dominant weight space $W_{P_N(\m'')}\neq\{0\}$ appears. Indeed, a direct computation may show that $\chi^N_+(W) = P_N(\m)+P_N(\m'')$.



\subsection{Combinatorial weight counting for standard modules}

In the analysis of the introduced map $\ch^\otimes$, we obtain an effective formula for the matrix of integer coefficients
\[
A(\m,\n):= [\rres_{\underline{n_{\n}}}(\zeta(\m))\,:\, \zeta(\n)^\otimes]\;,
\]
indexed by pairs of integral multisegments $\m,\n\in \Z_{\geq0}^{\seg(1)}$. This is the matrix that encodes the full reciprocal characters of standard $H_{n,v}$-modules.

By Theorem \ref{thm:introB}, these coefficients coincide with the specialization at $q=1$ of the (transposed) transition coefficients $s_{\m,\n}\in \Z[q,q^{-1}]$ between the bases $\mathscr{M}$ and $\mathscr{E}$, as defined in the statement of Theorem \ref{thm:introC}, that is, $A(\m,\n) = s_{\n,\m}(1)$.

Our approach rests on the claim that essentially the same formula governs multiplicities in the dominant part of $q$-characters of standard modules in the quantum affine setting. More precisely, Theorem \ref{thm:Aintro} follows, when we prove the identity
\[
\chi_+^N(\mathcal{F}_{N,n_{\m}}(\zeta(\m))) 
= \sum_{\n\in \Z_{\geq0}^{\seg}} A(\m,\n)\, P_N(\n),
\qquad 
\forall \m\in \Z_{\geq0}^{\seg(1)},\ \forall N\geq1.
\]

The coefficients $A(\m,\n)$, as well as their $q$-deformation $s_{\n,\m}$, arise naturally in contexts where monomial bases are employed (\cite{Antor23,shoji2025}). Thus, we consider the combinatorial description obtained here to be of independent interest. We now proceed to describe it in detail.

\subsubsection{Mackey tableaux}\label{sect:intro-tabl}

Suppose now that $\m\in \Z_{\geq0}^{\seg(1)}$ is an integral multisegment, given by a monomial $[v^{2x_1},v^{2y_1}],\ldots,[v^{2x_l},v^{2y_l}]\in \seg(1)$ of segments (on which we fix an arbitrary order). We define the finite set of \textit{bi-tableaux} $\mathcal{A}(\m)$, so that each member $\mathcal{Q}= (a^i_j, b^i_j)_{(i,j)\in I_{\mathcal{Q}}}\in \mathcal{A}(\m)$ consists of an array of integers $a^i_j\in \{x_1,\ldots,x_l\}$, $b^i_j\in \{y_1,\ldots,y_l\}$, that are indexed by $I_{\mathcal{Q}} = \{(i,j)\in \Z\times \Z\:: 1\leq i \leq l,\; 1\leq j\leq k_i\}$ and satisfy
\[
x_i = a^i_{k_i} < \cdots<a^i_1\leq y_i\leq  b^i_{1} < \cdots<b^i_{k_i}\;.
\]
For each such bi-tableau $\mathcal{Q}\in \mathcal{A}(\m)$ and a segment $\Delta = [v^{2x},v^{2y}]\in \seg(1)$, we take note of the statistics
\[
\begin{array}{cll}
t(\mathcal{Q}, \Delta)& = &\#\{(i,j)\in I_{\mathcal{Q}}\::\: (a^i_j,b^i_j)= (x,y)\}\;,
\\
\overleftarrow{t}(\mathcal{Q}, \Delta) &=& \#\{(i,j)\in I_{\mathcal{Q}}\::\: y_i<y,\, (a^i_{j-1},b^i_j)= (x,y)\}\;,
\end{array}
\]
where $a^i_0=y_i+1$ is assumed.

We say that $\mathcal{Q}\in \mathcal{A}(\m)$ is a \textit{dominant} bi-tableau, when the inequalities $t(\mathcal{Q}, \Delta)\geq \overleftarrow{t}(\mathcal{Q}, \Delta)$ hold, for all segments $\Delta\in \Seg$, and denote the set $\mathcal{A}(\m)_+\subseteq \mathcal{A}(\m)$ of dominant bi-tableaux.

For an integral multisegment $\n\in \Z_{\geq0}^{\seg(1)}$, we further denote the power $r(\n,\Delta)\geq0$ with which the segment appears in $\n$, as a monomial in segments. 

\begin{theoremintro}\label{thm:introD}
For any pair of multisegments $\m,\n\in \Z_{\geq0}^{\seg(1)}$, the corresponding reciprocal multiplicity is given by the formula 
\[
A(\m,\n) = \#\{\mathcal{Q}\in \mathcal{A}(\m)_+\;:\: r(\n,\Delta) = t(\mathcal{Q},\Delta) - \overleftarrow{t}(\mathcal{Q},\Delta)\;, \forall \Delta\in \seg\}\;,
\]
which counts the number of dominant bi-tableaux in $\mathcal{A}(\m)$ that satisfy a set of matching conditions that are determined by $\n$.

\end{theoremintro}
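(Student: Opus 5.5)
The plan is to compute the restriction $\rres_{\underline{n_{\n}}}(\zeta(\m))$ via the Geometric Lemma (Mackey formula), and then count the occurrences of the indicator module $\zeta(\n)^\otimes$ among the resulting subquotients.

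First, $\zeta(\m)$ is parabolically induced from the tensor product of the one-dimensional segment modules $\zeta([v^{2x_i},v^{2y_i}])$, taken in the standard order. Its restriction to any parabolic $H_{\underline{n'},v}$ therefore has a filtration, indexed by Mackey double cosets, whose subquotients are inductions of tensor products of restrictions of the segment modules. A segment module restricts to a composition $(c_1,\ldots,c_k)$ as the tensor product of its consecutive sub-segments. Concretely, one cuts $[x_i,y_i]$ into pieces from the top.

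Second, I would use Proposition \ref{prop:non-canon-ch} and Section \ref{sect:indicators}. The indicator $\zeta(\n)^\otimes$ is a tensor product of simple modules living on blocks of the composition $\underline{n_{\n}}$; I expect these to be the ``ladder/column'' pieces $Z$ of sub-multisegments grouped by a fixed ordering, for instance by end point. A Mackey subquotient contributes to the multiplicity $[\,\cdot:\zeta(\n)^\otimes]$ only if, on each block, the glued pieces form a module whose composition factors include the relevant indicator piece. Encoding a Mackey cut of each segment $i$ into pieces $[a^i_j, a^i_{j-1}-1]$ (left cuts) together with the way pieces are reassembled into segments ending at $b^i_j$ yields exactly the bi-tableau data. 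The chains $a^i_{k_i}<\cdots<a^i_1\le y_i\le b^i_1<\cdots$ record, respectively, the start points of cut pieces and the end points of the target segments they are absorbed into. Under this dictionary, a pair $(a^i_j,b^i_j)=(x,y)$ creates a segment $[x,y]$ in the target block. A pair $(a^i_{j-1},b^i_j)$ with $y_i<y$ instead records a ``linked'' gluing, where a piece of an induced product is not a standard but a Langlands quotient. Such a gluing subtracts one copy of $[x,y]$, which explains the net count $t-\overleftarrow{t}$.

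Third, I would compute each block's contribution. The tensor factors of a Mackey subquotient restricted to a block are products of segments. By Zelevinsky's classification, the Jordan--Hölder multiplicity of the block indicator in such a product should be $0$ or $1$, and it should equal $1$ exactly when the combinatorial matching holds. Dominance ($t\ge\overleftarrow{t}$) expresses that no negative multiplicity is required, that is, that the needed segments actually occur. To make this rigorous, I would argue block by block, using that the indicator pieces are characterized by their cuspidal support together with a ``highest'' Jacquet module (Proposition \ref{prop:non-canon-ch}). I would then show the bijection between surviving Mackey terms and $\{\mathcal{Q}\in\mathcal{A}(\m)_+\}$ with the prescribed statistics.

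The main obstacle is this third step. Standard segment products are not semisimple, and the indicator may appear inside a product of linked segments. Verifying multiplicity exactly one, and showing that the bookkeeping via $\overleftarrow{t}$ correctly accounts for these linked contributions without double counting across Mackey terms, requires care. I would first try an induction on the number of segments $l$: remove the segment with the largest $y_i$ (which bi-tableau rows treat last), and use a transitivity of restriction to peel off its contribution, namely the row $i$ of $\mathcal{Q}$. The induction step then reduces to a single-segment-times-indicator computation, which I would verify by the explicit Jacquet module of a segment.
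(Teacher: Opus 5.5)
There is a genuine gap, and it sits exactly at what you call the main obstacle. Step~3 is asserted (``should be $0$ or $1$'', ``should equal $1$ exactly when the matching holds'') rather than proved, and the mechanisms you propose would not deliver it.

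\begin{itemize}
\item The statistic $\overleftarrow{t}$ does not record a ``linked gluing'' in which a factor becomes a Langlands quotient.
\item Proposition~\ref{prop:non-canon-ch} only concerns the non-canonical grouping of indicators across different $v^{2\Z}$-cosets. It gives no Jacquet-module characterization; the characterization you have in mind is Proposition~\ref{prop:descending-spherical}.
\item The induction that peels off the row with the largest $y_i$ fails, because dominance couples different rows. Take $\m=[1,1]+[2,2]$ and $\n=[1,2]$. The unique contributing bi-tableau has rows $(1,2)$ and $(2,2)$. It is dominant only because $t(\mathcal{Q},[2,2])=1$, coming from the second row, compensates $\overleftarrow{t}(\mathcal{Q},[2,2])=1$, coming from the first. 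Deleting the last row leaves non-dominant data, so the count does not factor as a count for the smaller multisegment times a single-segment computation.
\end{itemize}
Conversely, your worries about non-semisimple subquotients and double counting are not real issues. Proposition~\ref{prop:mackey} is an identity in the Grothendieck group, so each Mackey term can be examined separately.

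The missing idea is sphericity.
\begin{itemize}
\item Each factor $\zeta(\n[d])$ of the indicator equals $M(\n[d])=Z(\n[d])$ with $\n[d]$ right-aligned, hence spherical.
\item In a Mackey term $M(\m_{\mathcal{Q},1})\boxtimes\cdots\boxtimes M(\m_{\mathcal{Q},l})$, the $e$-th factor has central character $\supp(\m_{\mathcal{Q},e})$. A spherical multisegment is determined by its support, so the only spherical simple that can occur there is $Z(\m^\circ_{\mathcal{Q},e})$. It occurs exactly once by Lemma~\ref{lemma:m_circ}\eqref{it:mc3}.
\item Hence each Mackey term contributes $1$ to the single $\n$ with $\n[d_e]=\m^\circ_{\mathcal{Q},e}$ for all $e$, provided every $\m^\circ_{\mathcal{Q},e}$ is right-aligned at $d_e$; otherwise it contributes $0$.
\end{itemize}

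What remains is a support computation. By Proposition~\ref{prop:crit-right}, $\m^\circ_{\mathcal{Q},e}$ is right-aligned at $d_e$ iff $\delta(\m_{\mathcal{Q},e})-\epsilon'_{d_e}(\m_{\mathcal{Q},e})\in\Z_{\geq0}^{\mathcal{I}}$. In that case, the coefficient of $[x,d_e]$ in $\m^\circ_{\mathcal{Q},e}$ is the $x$-coefficient of this difference. A piece $[a^i_j,a^i_{j-1}-1]$ sent to the block with end point $d=b^i_j$ has the same support as $[a^i_j,d]-[a^i_{j-1},d]$. The subtracted segment is empty exactly when $j=1$ and $b^i_1=y_i$. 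So that coefficient is precisely $t-\overleftarrow{t}$, and dominance is its nonnegativity.

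To land in $\mathcal{A}(\m)$ you still need two further facts about contributing terms:
\begin{itemize}
\item Their $b$-entries lie in $E(\m)$ with $y_i\le b^i_1$; this is the argument of Lemma~\ref{lem:Em-in-En}.
\item Their $a$-entries lie in $\{x_1,\ldots,x_l\}$. This follows by iterating $(i,j)\mapsto\tau(i,j+1)$ for a connection $\tau$. The iteration preserves the $a$-value and strictly increases the block index, so it terminates at some last entry $a^{i'}_{k_{i'}}=x_{i'}$ (compare Proposition~\ref{prop:+0}).
\end{itemize}
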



\section{Preliminaries}

\subsection{General constructions}

Given a set $A$, we write $\Z^A$ for the set of functions $A\to \Z$ with \textit{finite} support. We treat it as an additive free abelian group whose generator set is $A$. That is, elements will be written as formal combinations $\sum_{a\in A} c_a a\in \Z^A$, with $c_a\in \Z$. In particular, we often treat $A \cong \{1\cdot a\in \Z^{A}\}_{a\in A}$ as a subset of $\Z^A$.

The submonoid $\Z_{\geq0}^A\subset \Z^A$ may be viewed as the collection of multisets of elements in $A$.

We write $\mathcal{W}(A) = \bigsqcup_{k\geq0} A^k$ for the set of words in $A$, treating it as a free (non-commutative) multiplicative monoid with respect to word concatenation. A convention $\{e\} = A^0 \subset\mathcal{W}(A)$ is imposed here.

Let us denote the monoid homomorphism $\mathcal{W}(A)\to \Z_{\geq0}^A$, $w\mapsto\langle w\rangle$, obtained by assigning $\langle (a_1,\ldots,a_k)\rangle:= a_1+\ldots + a_k$, for a word $(a_1,\ldots,a_k)\in A^k$.

For example, with $A=\{a_1, a_2, a_3\}$ and a word $w=(a_1, a_2, a_1, a_3)\in A^4$, we have $\langle w \rangle = 2 a_1 + a_2 + a_3$.

\subsubsection{Tableaux}
We treat elements of $\tab(A):=\mathcal{W}(\mathcal{W}(A))$ as \textit{tableaux in} $A$. A given element $\mathcal{P} = \mathcal{P}_1\cdots \mathcal{P}_r\in \tab(A)$ with $\mathcal{P}_i = x^i_1\cdots x^i_{k_i}\in \mathcal{W}(A)$, $i=1,\ldots, r$, may equivalently be considered as a function
\[
\mathcal{P}: I_{\mathcal{P}}\to A\;,\quad \mathcal{P}(i,j) = x^i_j\;,
\]
where $I_{\mathcal{P}} = \{(i,j)\in \{1,\ldots,r\}\times\Z\,:\, 1\leq j\leq k(i)\}$.

It will be useful to denote the auxillary index set $\widehat{I}_{\mathcal{P}} = \{(i,j)\in I_{\mathcal{P}} \,:\, 2\leq j\}$.

We may also write $\mathcal{P} =(x^i_j)\in \tab(A)$ directly.

We refer to elements of $\Tab(\Z\times\Z)$ as \textit{bitableaux}.

For $A = \{a_1, a_2\}$, an example of a tableau is $\mathcal{P}=((a_1,a_2),(a_1,a_2,a_2),(a_2))\in \tab(A)$, with $I_{\mathcal{P}}=\{(1,1),(1,2),(2,1),(2,2),(2,3),(3,1)\}$ and $\widehat{I}_{\mathcal{P}}=\{(1,2),(2,2),(2,3)\}$.

\subsubsection{Graded multisets}

We say that a set $A$ is \textit{graded} (respectively, \textit{positively graded}), when a fixed decomposition $A = \bigsqcup_{i\in \Z} A_i$ (respectively, $A = \bigsqcup_{i=0}^\infty A_i$) is assumed. For a graded or positively graded set $A$, we have a corresponding group decomposition $\Z^A = \bigoplus_i\Z^{A_i}$ of groups.

For a positively graded set $A$, we also write $A_{\leq N} = \sqcup_{i=0}^N A_i$.

\subsubsection{Integer constructions}

Let $\mathcal{I} = \{\alpha_i\}_{i\in \Z}$ be a formal set indexed by integers, and consider its finite subsets $\mathcal{I}_{\leq k} = \{\alpha_i\}_{i=-k}^k$, for all integers $k\geq0$.

There are natural inclusions $\Z^{\mathcal{I}_{\leq k}}<\Z^{\mathcal{I}}$ and $\mathcal{W}(\mathcal{I}_{\leq k})<\mathcal{W}(\mathcal{I})$.

For an element $\beta = \sum_{i\in \Z} c_i \alpha_i\in \Z^{\mathcal{I}}$, we denote its \textit{height} $|\beta|= \sum_{i\in \Z}c_i\in \Z$, and write $\beta(i) = c_i$.

We denote the sets of words 
\[
\mathcal{W}_n:=\{w\in \mathcal{W}(\mathcal{I}) \::\: |\langle w \rangle|=n\}\;,\quad \mathcal{W}_n^k:=\mathcal{W}_n\cap\mathcal{W}(\mathcal{I}_{\leq k})\;, \forall k\geq0
\]
of length $n$.

These give a natural structure of a positively graded set on $\mathcal{W}(\mathcal{I})$ and on $\mathcal{W}(\mathcal{I}_{\leq k}
)$, for each $k\geq0$.

\subsection{Rings}

\subsubsection{Polynomial rings}

For a set $A$, we will write $\mathscr{Y}(A)$ for the \textit{multiplicative} free abelian group whose generator set is $A$. Thus, a formal group isomorphism $\exp: \Z^{A}\to \mathscr{Y}(A)$ is in place, giving a bijection $a\in A \mapsto \exp(a)\in \mathscr{Y}(A)$ on generators.

Setting $\mathscr{Y}_+(A) :=\exp(\Z_{\geq0}^A)< \mathscr{Y}(A)$, allows us to consider 
\[
\Z[A]:=\Z^{\mathscr{Y}_+(A)}
\]
as a polynomial ring in the formal variables $\{\exp(a)\}_{a\in A}$\footnote{We note that the slightly less formal discussion in the introduction section omitted the $\exp$ notation for ease of narrative.}, while 
\[
\Z[A^{\pm1}]:=\Z^{\mathscr{Y}(A)}
\]
being the larger ring of Laurent polynomials in same variables.

We write
\[
\theta = \sum_{\m\in \Z^{A}} c_\m \exp(\m)\in \Z[A^{\pm1}]\quad \mapsto\quad \theta_+:=\sum_{\m\in \Z_{\geq0}^{A}} c_\m \exp(\m)\in\Z[A]
\]
for the natural truncation map (which is not a ring homomorphism).

We similarly define a group homomorphism $\theta\mapsto \theta_+$ from $\mathrm{Maps}(\mathscr{Y}(A),\Z)$ into $\mathrm{Maps}(\mathscr{Y}_+(A),\Z)$, when dealing with functions of infinite support.

\subsubsection{Free rings}

The group 
\[
\mathscr{Fr}(A):=\Z^{\mathcal{W}(A)}
\]
has a natural structure of a free non-commutative associative ring, in the variables given by $A$. Similarly, we take note of the \textit{quantized} $\mathbb{Q}(q)$-algebra  
\[
\mathscr{Fr}_q(A):=\mathbb{Q}(q)^{\mathcal{W}(A)}\;,
\]
where $\mathbb{Q}(q)$ is the field of formal rational functions.

We shortcut notation to $R:= \mathscr{Fr}(\mathcal{I})$, $R(q):= \mathscr{Fr}_q(\mathcal{I})$ and write their subrings $R\{k\}:=\mathscr{Fr}(\mathcal{I}_{\leq k}) $, $R(q)\{k\}:=\mathscr{Fr}_q(\mathcal{I}_{\leq k}) $, for each $k\geq0$.

When setting 
\[
R_n:= \Z^{\mathcal{W}_n}\,,\; R_n(q):= \mathbb{Q}(q)^{\mathcal{W}_n}\,,\; R_n\{k\}:=\Z^{\mathcal{W}^k_n}\,,\;R_n(q)\{k\}:=\mathbb{Q}(q)^{\mathcal{W}^k_n}\;,
\]
we see gradings on the ring $R$ and the $\mathbb{Q}(q)$-algebra $R(q)$.

We may naturally view $R = \varinjlim_k R\{k\}$ and $R(q) = \varinjlim_k R(q)\{k\}$ as direct limits of graded (finitely generated) rings.

\subsection{Multisegments}

The formal notion of \textit{segments} will consists of ordered pairs of integers
\[
    \seg = \left\{ [a,b] \mid a, b \in \mathbb{Z},\; a\leq b \right\}\;.
\]
We refer to elements of $\Z_{\geq0}^{\seg}$ as \textit{multisegments}, and to elements of $\mathcal{W}(\seg)$ as \textit{ordered multisegments}.

We say that $\underline{\m}\in \mathcal{W}(\seg)$ is an \textit{ordering} of a multisegment $\m\in \Z_{\geq0}^{\seg}$, when $\langle \underline{\m} \rangle = \m$.

For instance, $\underline{\m}=[1,3][0,1][1,3]$ is a possible ordering of the multisegment $\m = [0,1] + 2[1,3]$.

\subsubsection{Integral decomposition of rings}
For each number $\eta\in \C^\times$, we see a copy of $\seg$ embedded into $\widetilde{\seg}$, the set of segments as defined in the introduction section, in the following manner:
\[
\seg(\eta):= \{ [\eta v^{2a}, \eta v^{2b}]\in \widetilde{\seg}\}_{[a,b]\in \seg}\;.
\]
The disjoint decomposition
\[
\widetilde{\seg} = \bigsqcup_{\eta\in \C^\times/\langle v^2\rangle} \seg(\eta) 
\]
according to multiplicative cosets of the group generated by $v^2$, induces a natural decomposition 
\begin{equation}\label{eq:decomp-ring}
\Z\left[\widetilde{\seg}\right] = \bigotimes_{\eta\in \C^\times/\langle v^2\rangle} \Z[\seg(\eta)]
\end{equation}
of polynomial rings. We write $\alpha_\eta:\Z[\seg] \xrightarrow{\sim} \Z[\seg(\eta)]< \Z\left[\widetilde{\seg}\right]$ for the resulting ring embeddings.

\subsubsection{Gradings}
When setting
\[
\seg_i =\{[a,b]\in \seg\,:\, b-a+1= i\}\;,\quad \seg^i =\{[a,b]\in \seg\,:\, b= i\}\;, 
\]
we see the graded structures $\seg = \bigsqcup_{i\geq1} \seg_i = \bigsqcup_{i\in \Z} \seg^i$.



We write $\m = \sum_{i\in \Z} \m[i]\in \Z^{\Seg}$, with $\m[i]\in \Z^{\Seg^i}$, according to the latter grading.

For $\m\in \Z^{\seg}$, we write $E(\m)= \{i\in \Z\,:\, \m[i]\neq0\}$ for the finite set of end points of segments in $\m$.

We say that a multisegment $\m \in \Z_{\geq0}^{\Seg}$ is \textit{right-aligned}, when $|E(\m)|\leq1$, that is, when $\m = \m [i_0]$ holds, for a single $i_0\in \Z$.

We say that an ordering $\underline{\m}\in \mathcal{W}(\seg)$ of a multisegment $\m\in \Z_{\geq0}^{\seg}$ is \textit{admissible}, when 
\[
\underline{\m} = \underline{\m[d_1]}\cdot \ldots \cdot \underline{\m[d_r]}
\]
holds, for some orderings $\underline{\m[d_i]}\in \mathcal{W}(\seg)$ of $\m[d_i]$, with $E(\m) = \{d_1<\ldots<d_r\}$.

For the multisegment \(\m=[0,2]+[2,2]+2[1,3]\), we have
\(\m[2]=[0,2]+[2,2]\), \(\m[3]=2[1,3]\), and \(E(\m)=\{2,3\}\).
For instance, \([2,2][0,2][1,3][1,3]\) is an admissible ordering of \(\m\), whereas \([1,3][0,2][2,2][1,3]\) is not.

\subsubsection{Support functions}

We denote the maps
\[
\supp,\, \delta,\, \epsilon: \seg \to \Z_{\geq0}^{\mathcal{I}}\,,
\]
\[
\;\supp([a,b]) = \alpha_{a} + \alpha_{a+1} + \ldots + \alpha_{b}\;,\; \delta([a,b])= \alpha_a\;,\; \epsilon([a,b])=\alpha_b
\]
and extend it to group homomorphisms $\supp,\delta,\epsilon: \Z^{\seg} \to \Z^{\mathcal{I}}$, all of which clearly map the monoid $\Z_{\geq0}^{\seg}$ into $\Z_{\geq0}^{\mathcal{I}}$.

We write \[
\seg\{k\} := \{[a,b]\in \seg\,:\,-k\leq a, b\leq k\} = \supp^{-1}(\Z^{\mathcal{I}_{\leq k}})\cap\seg\;,
\]
for $k\geq0$.

Let us take note of the positively graded set structures on the monoids of multisegments 
\[
\Z_{\geq0}^{\seg} = \bigsqcup_{n=0}^\infty (\Z_{\geq0}^{\seg})_n\;,\quad\Z_{\geq0}^{\seg\{k\}} = \bigsqcup_{n=0}^\infty (\Z_{\geq0}^{\seg\{k\}})_n\;, k\geq0\;,
\]
that arise when defining $(\Z_{\geq0}^{\Seg})_n:= \{\m\in \Z_{\geq0}^\seg\,:\, | \supp(\m) | = n\}$ and $(\Z_{\geq0}^{\Seg\{k\}})_n= (\Z_{\geq0}^{\Seg})_n\cap \Z_{\geq0}^{\Seg\{k\}}$. 

We may write a grading
\[
\Z[\seg] = \bigoplus_{n=0}^\infty \Z[\seg]_n
\]
on the polynomial ring, by setting $\Z[\seg]_n:=\Z^{\exp((\Z_{\geq0}^{\seg})_n)}$.

In these terms, we have
\[
E(\m) = \{d\in \Z\::\: \epsilon(\m)(d)\neq0\}\;,
\]
for $\m\in \Z_{\geq0}^{\seg}$, and we similarly write
\[
D(\m) = \{d\in \Z\::\: \delta(\m)(d)\neq0\}
\]
for the set of begin points of segments in $\m$.

\begin{lemma}\label{lem:monotone}
    For any $\m\in \Z^{\seg}$ and $i\in \Z$, we have
    \[
    \supp(\m)(i + 1) - \supp(\m)(i) = \delta(\m)(i+1) - \epsilon(\m)(i)\;.
    \]
\end{lemma}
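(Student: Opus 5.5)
Both sides of the identity are additive in $\m\in\Z^{\seg}$, because $\supp$, $\delta$ and $\epsilon$ are group homomorphisms and evaluation at a fixed index is linear. So I will reduce to the case where $\m$ is a single generator $[a,b]\in\seg$. The general case then follows by summing with the integer coefficients of $\m$.

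For a single segment $[a,b]$ with $a\le b$, the function $\supp([a,b])$ is the indicator function of the integer interval $\{a,\ldots,b\}$. Its discrete derivative $\supp([a,b])(i+1)-\supp([a,b])(i)$ is therefore $+1$ exactly when $i+1=a$ (entering the interval), $-1$ exactly when $i=b$ (leaving it), and $0$ otherwise.

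On the other side, $\delta([a,b])(i+1)=1$ precisely when $i+1=a$, and $\epsilon([a,b])(i)=1$ precisely when $i=b$. Both are zero otherwise, so the right-hand side has the same values. The two cases $i+1=a$ and $i=b$ cannot happen simultaneously, since that would force $b=a-1<a$; hence no cancellation issue arises. (Even if they did coincide, both sides would simply add the two contributions in the same way.)

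There is no real obstacle here. The only care needed is to check the boundary cases $i+1=a$ and $i=b$, including the degenerate segment $a=b$, where $\supp$ is supported at a single point and both contributions still appear at distinct indices $i=a-1$ and $i=a$.
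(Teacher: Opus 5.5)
Your proof is correct and follows the same route as the paper: by additivity of $\supp$, $\delta$, $\epsilon$ you reduce to a single segment $[a,b]$ and compute both sides directly. Your case check, noting that $i+1=a$ and $i=b$ cannot both hold since $a\le b$, simply writes out the computation the paper describes as easy.
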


\begin{proof}
    By additivity it is enough to assume that $\m = \Delta\in \seg$. In this case, both sided of the equality are easily computed.
\end{proof}

\subsubsection{Spherical multisegments}

We say that a segment $[a,b]\in \seg$ \textit{precedes} another segment $[c,d]\in \seg$, when $a<c\leq b+1< d+1$ holds.

We say that a multisegment $\m\in \Z_{\geq0}^{\seg}$ is \textit{spherical}, when in any, or one, ordering $\underline{\m}= \Delta_1\cdot\ldots \cdot \Delta_r\in \mathcal{W}(\seg)$ of $\m$, $\Delta_i$ does not precede $\Delta_j$, for any $1\leq i,j\leq r$.

Clearly, right-aligned multisegments are spherical.

It is easy to verify that for any $\beta\in \Z_{\geq0}^{\mathcal{I}}$, there is a unique spherical multisegment $\m_\beta\in \Z_{\geq0}^{\seg}$ with $\supp(\m_\beta)= \beta$.

For any multisegment $\m\in \Z_{\geq0}^{\seg}$, we denote $\m^\circ\in \Z_{\geq0}^{\seg}$ to be the spherical multisegment with $\supp(\m^\circ) = \supp(\m)$.

For example, when $\m = [0,1] + [2,2] + 2[1,3]$, we have $\supp(\m) = \alpha_0 + 3\alpha_1 + 3\alpha_2 + 2\alpha_3$ and $\m^\circ = [0,3] + [1,3]+ [1,2]$.

We will need the following criterion for detecting whether $\m^\circ$ is right-aligned.

\begin{proposition}\label{prop:crit-right}
     For a multisegment $\m\in \Z_{\geq0}^{\seg}$ and $b\in \Z$, consider the shifted multiset
        \[
        \epsilon'_b(\m) = \sum_{b\neq i\in \Z} \epsilon(\m)(i) \,\alpha_{i+1}\in \Z_{\geq0}^{\mathcal{I}}\;,
        \]

Then, the spherical multisegment $\m^\circ$ is right-aligned, if and only if, $b\in \Z$ exists, for which $\delta(\m) - \epsilon'_b(\m)\in \Z_{\geq0}^{\mathcal{I}}$.

When the above condition holds, we have
\[
\m^\circ = \m^\circ[b] = \sum_{a\in \Z} \left(\delta(\m)(a) - \epsilon'_b(\m)(a)\right)  [a,b]\;,
\]
and $\delta(\m^\circ) = \delta(\m) - \epsilon'_b(\m)$.
\end{proposition}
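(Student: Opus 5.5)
The plan is to reduce everything to the support function $\beta:=\supp(\m)=\supp(\m^\circ)$, using Lemma \ref{lem:monotone} applied to both $\m$ and $\m^\circ$. Since $\m$ and $\m^\circ$ have the same support, the lemma gives, for every $i$,
\[
\delta(\m)(i+1)-\epsilon(\m)(i) = \beta(i+1)-\beta(i) = \delta(\m^\circ)(i+1)-\epsilon(\m^\circ)(i)\;.
\]
In other words, $\delta(\m)-\sum_i\epsilon(\m)(i)\alpha_{i+1} = \delta(\m^\circ)-\sum_i \epsilon(\m^\circ)(i)\alpha_{i+1}$, and this difference is an invariant of the support alone. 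The whole argument is bookkeeping around this identity.

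For the forward direction, suppose $\m^\circ=\m^\circ[b]$ is right-aligned. Then $\epsilon(\m^\circ)$ is supported at $b$, so the identity at every $i\neq b$ reads $\delta(\m)(i+1)-\epsilon(\m)(i)=\delta(\m^\circ)(i+1)\geq 0$. At $i=b$, the summand $\epsilon(\m)(b)$ is omitted from $\epsilon'_b$, so the relevant coordinate $\alpha_{b+1}$ is just $\delta(\m)(b+1)\ge0$. Hence $\delta(\m)-\epsilon'_b(\m)\in\Z_{\geq0}^{\mathcal{I}}$. The same computation also yields $\delta(\m^\circ)=\delta(\m)-\epsilon'_b(\m)$ in all coordinates except possibly $b+1$. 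There, $\delta(\m^\circ)(b+1)=0$, since all segments of $\m^\circ$ end at $b$. We also need $\delta(\m)(b+1)=0$. I would get this from $\beta(b+1)=0$: the identity at $i=b$ gives $\delta(\m)(b+1)-\epsilon(\m)(b)=-\epsilon(\m^\circ)(b)=-\beta(b)$. Alternatively, one can just observe that $\beta$ vanishes above $b$ and is nondecreasing up to $b$, so no segment of $\m$ can start at $b+1$. This gives the displayed formula, since a right-aligned multisegment is determined by its begin points.

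For the converse, suppose $\gamma:=\delta(\m)-\epsilon'_b(\m)\geq0$ for some $b$, and set $\n=\sum_a \gamma(a)[a,b]$. First we must check that $\n$ is a genuine multisegment, i.e. that $\gamma(a)=0$ for $a>b$. This is the main obstacle. The route I would try first is a telescoping sum. Summing the lemma over $i\geq j$ for $j>b$ gives
\[
-\beta(j)=\sum_{i\ge j}\bigl(\delta(\m)(i+1)-\epsilon(\m)(i)\bigr)=\sum_{a>j}\gamma(a)-\epsilon(\m)(j)\cdot[\text{correction}]\;,
\]
which is more cleanly written as $\sum_{a>b}\gamma(a) = \sum_{a>b}\delta(\m)(a)-\sum_{i\geq b+1}\epsilon(\m)(i)\leq 0$. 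The inequality holds because every segment starting above $b$ also ends above $b$, and it forces $\gamma(a)=0$ for $a>b$ by nonnegativity.

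Next, $\n$ is right-aligned and hence spherical. Its support satisfies the lemma with $\delta(\n)=\gamma$ and $\epsilon(\n)=|\gamma|\alpha_b$. Comparing with $\m$ via the lemma, $\supp(\n)$ and $\supp(\m)$ have the same successive differences at all $i\ne b$. Both vanish for $i$ large, by the argument just given. At $i=b$ the differences also agree, since the total height matches: $|\gamma|=|\delta(\m)|-|\epsilon(\m)|+\epsilon(\m)(b)$, which equals $\epsilon(\m)(b)$ up to the sign convention checked against $\beta(b)$. Thus $\supp(\n)=\supp(\m)$. By uniqueness of the spherical multisegment with given support, $\n=\m^\circ$, which yields both claims.
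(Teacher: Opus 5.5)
Your argument is correct. The forward direction is essentially the paper's: the paper uses that $\supp(\m^\circ)$ is nondecreasing at every $i\neq b$ and feeds this into Lemma \ref{lem:monotone}, while you compare $\m$ with $\m^\circ$ through the same lemma, which also gives you $\delta(\m^\circ)=\delta(\m)-\epsilon'_b(\m)$ directly.

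The converse is where the routes diverge. The paper fixes $b\geq \max E(\m)$ and works with the virtual multisegment $\m_b=\sum_a(\delta(\m)(a)-\epsilon'_b(\m)(a))[a,b]\in\Z^{\seg}$. It proves the positivity-free identity $\supp(\m_b)=\supp(\m)$ by additivity, checking it on a single segment $[a,c]$, where $\m_b=[a,b]-[c+1,b]$. Positivity of $\delta(\m)-\epsilon'_b(\m)$ is then used only to see that $b=\max E(\m)$ is forced, so that $\m_b$ is a genuine right-aligned multisegment and hence equals $\m^\circ$.

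You never pin down $b$. Your counting inequality $\sum_{a>b}\gamma(a)=\sum_{a>b}\delta(\m)(a)-\sum_{i>b}\epsilon(\m)(i)\leq 0$ shows directly that $\n$ is a genuine multisegment, and you then match supports by comparing successive differences via Lemma \ref{lem:monotone}. The paper's route is shorter, because the support identity is checked once on generators. Yours uses only honest multisegments and the local lemma, at the cost of some bookkeeping.

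On that bookkeeping, your treatment of $i=b$ (the ``sign convention checked against $\beta(b)$'' remark) is vague, but it is easy to make exact:
\begin{itemize}
\item $|\gamma|=\epsilon(\m)(b)$ exactly, because $|\delta(\m)|=|\epsilon(\m)|$;
\item $\delta(\m)(b+1)=\gamma(b+1)=0$, by your vanishing result above $b$.
\end{itemize}
The step is in fact unnecessary. Agreement of differences at all $i\neq b$ makes $\supp(\n)-\supp(\m)$ constant on $(-\infty,b]$ and on $[b+1,\infty)$. Since it is finitely supported, it vanishes identically.
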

\begin{proof}

For a multisegment $\m\in  \Z_{\geq0}^{\seg}$ and $\max E(\m)\leq b\in \Z$, let us consider 
\[
\m_b = \sum_{a\in \Z} \left(\delta(\m)(a) - \epsilon'_{b}(\m)(a)\right)  [a,b]\in \Z^{\seg}\;.
\]
Let us note that $\supp(\m) = \supp(\m_b)$ holds. 

Indeed, when $\m = [a,c] \in \seg$ is a segment with $c\leq b $, we have $\m_b = [a,b] - [c+1,b]$ (with $[b+1,b]= 0$), and the equality is visible. The general case now follows by additivity.

Suppose now that $0\neq \m\in \Z_{\geq0}^{\seg}$ is given with $\delta(\m) - \epsilon'_{b_0}(\m)\in \Z_{\geq0}^{\mathcal{I}}$.

It is evident that $\delta(\m)-\epsilon'_b(\m)$ cannot belong to $\Z_{\geq0}^{\mathcal{I}}$ when $b\neq \max E(\m)$. Thus $b_0 = \max E(\m)$.

Then, $\m_{b_0}\in \Z_{\geq0}^{\seg}$ becomes a right-aligned multisegment with $\supp (\m_{b_0}) = \supp(\m)$. Since right-aligned mutlisegments are spherical, we have $\m_{b_0} = \m^\circ$.

Conversely, suppose that $\m\in \Z_{\geq0}^{\seg}$ has $\m^\circ = \m^\circ[b]$. That implies $\supp(\m^\circ)(i)\leq \supp(\m^\circ)(i+1)$, and consequently, $\supp(\m)(i)\leq \supp(\m)(i+1)$, for all $b\neq i\in \Z$. 

The last condition now implies $\delta(\m) - \epsilon'_{b}(\m)\in \Z_{\geq0}^{\mathcal{I}}$ by Lemma \ref{lem:monotone}.

\end{proof}

\section{Representation theory of type $A$ affine Hecke algebras}

For a complex associative algebra $H$, we denote its abelian category of finite-dimensional modules as $\Rep(H)$ and often treat it as its collection of objects.

We write $\Irr(H)$ for the set of isomorphism classed of simple modules in $\Rep(H)$. 

We denote $G(H)= \Z^{\Irr(H)}$ to be the Grothendieck group of $\Rep(H)$, and write $[V]\in G(H)$ for the semisimplified class of a module $V\in \Rep(H)$.

For a module $V\in \Rep(H)$, writing $[V] = \sum_{L\in \Irr(H)} c_{V,L} [L]$, we take note of the multiplicities 
\[[V:L]:= c_{V,L}
\]
of simple modules in the Jordan--H\"older series of $V$. We say that $L\in \Irr(H)$ \textit{appears} in $[V]$, when $L$ is isomorphic to a subquotient of the module $V$, or equivalently, $[V:L]\neq0$.

\subsection{Hecke algebras}

Let $v\in \mathbb{C}$ be a fixed non-root of unity.

Given $n\in \mathbb{Z}_{>0}$, the $GL_n$ root datum gives rise to the (extended) \textit{affine Hecke algebra} $H_{n}= H_{n,v}$: This is the complex associative algebra generated by $T_1,\ldots, T_{n-1}$ and invertible $y_1,\ldots,y_n$, subject to the relations
\[
\begin{array}{ll}
T_i T_{i+1} T_i = T_{i+1} T_i T_{i+1},\; & \forall 1\leq i\leq n-2\\

(T_i -v^2)(T_i+1)=0,\;& \forall 1\leq i \leq n-1\\

T_iT_j = T_j T_i,\;  & \forall |j-i|>1\\

y_iy_j = y_jy_i,\;& \forall 1\leq i,j\leq n\\

T_i y_iT_i = v^2 y_{i+1},\; &\forall 1\leq i\leq n-1\\

T_i y_j = y_jT_i,\; &\forall j\neq i, i+1\;.
\end{array}\;
\]

We often treat $H_0=\C$ as the trivial algebra.

\subsubsection{Restriction and induction}
For a composition $\underline{n} = (n_1, \ldots ,n_r)$ of $n$, the algebra $H_{\underline{n}}: =  H_{n_1}\otimes \cdots \otimes H_{n_r}$ is naturally embedded as
\[
\iota_{\underline{n}}: H_{\underline{n}} \to H_n\;,
\]
by sending the generators $\tilde{T}_i,\tilde{y}_i$ of $H(n_j,q)$ to $T_{n_1+ \ldots + n_{j-1} + i}, y_{n_1+ \ldots + n_{j-1} + i}$ in $H_n$.

This embedding gives rise to a restriction functor
\[
\rres_{\underline{n}}: \Rep(H_n) \to \Rep(H_{\underline{n}})\;.
\]
The exact induction functor
\[
\iind_{\underline{n}}: \Rep(H_{\underline{n}}) \;\to\; \Rep(H_n)\;,\quad
\iind_{\underline{n}}(V) = H_n\otimes_{\iota_{\underline{n}}(H_{\underline{n}})}V
\]
is left-adjoint to $\rres_{\underline{n}}$.

Given modules $V_i\in \Rep(H_{n_i})$, for $i=1,\ldots,r$, we denote
\[
V_1\times\cdots \times V_r := \iind_{\underline{n}}(V_1\boxtimes \ldots \boxtimes V_r)\in \Rep(H_n)\;.
\]
\begin{remark}\label{rem:ss-order}
It is known (e.g. \cite[Theorem 1.9]{zel-main}) that an equality
\[
[V_1\times\cdots \times V_r] = [V_{\sigma(1)}\times\cdots \times V_{\sigma(r)}]
\]
holds in $G_1(H_n)$, for any permutation $\sigma$ of the index set $\{1,\ldots,r\}$.
\end{remark}

\subsubsection{Weight decomposition and central characters}
The composition $(1,\ldots,1)$ of $n$ gives a subalgebra $P_n =\mathbb{C}[y_1^{ \pm 1},\ldots y_n^{ \pm 1}]=\iota_{(1,\ldots,1)}(H_{(1,\ldots,1)})< H_n$ of Laurent polynomials.

Restricting to $P_n$ provides a weight space decomposition for each module $V\in \Rep(H_n)$ of the form
\[
V= \oplus_{\chi\in (\mathbb{C}^\times)^n} V_\chi\;,
\]
where $V_\chi$, for $\chi = (\chi_1,\ldots,\chi_n)$, denotes the common generalized eigenspace of $P_n$ in $M$, given by the character $y_i \mapsto \chi_i$.

Recall that the center subalgebra of $H_n$ is given by the symmetric Laurent polynomials in $P_n$ (\cite[Proposition 3.11]{lusz-affine}). In particular, the complex central characters of $H_n$ are given by orbits of the action of $S_n$ (symmteric group) on $(\mathbb{C}^{\times})^n$.

Hence, the full subcategory $\Rep_{\overline{\chi}}(H_n)$, for a given $\overline{\chi} \in (\mathbb{C}^\times)^n/S_n$, of modules $V\in \Rep(H_n)$ that decompose as $V = \oplus_{ \chi\in \overline{\chi}} V_\chi$, is a Serre subcategory.




\subsubsection{Integral blocks}\label{sect:integral-block}

For $\eta\in \C^\times$, we denote by $\Rep_{\eta}(H_n)$ the Serre subcategory of $\Rep(H_n)$ consisting of modules in which $y_i$'s all act with eigenvalues of the form $\eta v^{2i}$, with $i\in \Z$. In other words,
\[
\Rep_{\eta}(H_n) = \oplus_{\overline{\chi}\in(\eta v^{2\mathbb{Z}})^n/S_n} \Rep_{\overline{\chi}}(H_n)\;.
\]
We take the convention of $\Rep_\eta(H_0) = \Rep(H_0)$.

We also write $\Irr_{\eta}(n)$ for the set of classes of simple modules in $\Rep_{\eta}(H_n)$, and 
\[
G_{\eta}(H_n) = \Z^{\Irr_{\eta}(n)}< G(H_n)
\]
for the corresponding Grothendieck group of $\Rep_{\eta}(H_n)$.

We similarly write $\Rep_{\eta}(H_{\underline{n}})$ and $G_{\eta}(H_{\underline{n}})$, for the analogous construction with a composition $\underline{n}$.

There is an evident family of algebra automorphisms $\sigma_{\eta}:H_n\to H_n$, for $\eta\in \C^\times$, that is given by $\sigma_{\eta}(T_i) = T_i$ and $\sigma_{\eta}(y_j) = \eta y_j$, on generators. Pulling back through it, provides equivalences of abelian categories
\begin{equation}\label{eq:shift-eta}
\sigma_{\eta}^\ast: \Rep_1(H_n)\overset{\sim}{\rightarrow}  \Rep_{\eta}(H_n)\;.
\end{equation}

Thus, the full category $\Rep(H_n)$ may naturally be viewed as a direct sum of a continuum of copies the integral block $\Rep_1(H_n)$, indexed by the cosets of $\C^\times/\langle v^2\rangle$.

The assignment
\[
\chi^{\eta}_w =(\eta v^{2i_1}, \ldots, \eta v^{2i_n})\in (\eta v^{2\mathbb{Z}})^n \;\mapsto\; w=\alpha_{i_1}\cdot\ldots\cdot\alpha_{i_n}\in \mathcal{W}_n
\]
sets a natural bijection between the $P_n$-weights appearing in a module $V\in \Rep_{\eta}(H_n)$ and the set $\mathcal{W}_n$ of words of length $n$ with letters in $\mathcal{I}$. We also write $\chi^1_{w} = \chi_w$.

The projection $( \eta v^{2\mathbb{Z}})^n \to (\eta v^{2\mathbb{Z}})^n/S_n$ onto orbits corresponds under this bijection to the map $w \mapsto \langle w\rangle$. Thus, we may identify the central characters that are admitted by modules in $\Rep_{\eta}(H_n)$ with elements of $\Z_{\geq0}^{\mathcal{I}}$ of height $n$.

In this sense, for a simple module $L\in \Irr_{\eta}(n)$, we write $\overline{\chi^{\eta}}_L\in \Z_{\geq0}^{\mathcal{I}}$ for its central character.
\begin{remark}\label{rem:chars-cent}
    For any $L_i \in \Irr_{\eta}(n_i)$, $i=1,2$, we have $L_1\times L_2\in \Rep_{\overline{\chi^{\eta}}_{L_1}+\overline{\chi^{\eta}}_{L_2}}(H_{n_1+n_2})$. In particular, any simple subquotient $L$ of $L_1\times L_2$ admits a central character $\overline{\chi^{\eta}}_L = \overline{\chi^{\eta}}_{L_1} + \overline{\chi^{\eta}}_{L_2}$.

Dually, whenever $L_1\boxtimes L_2$ is a simple subquotient of a module of the form $\rres_{(n_1,n_2)} (V)$, for $V\in \Rep_{\overline{\chi^{\eta}}}(H_{n_1+n_2})$, we must have $\overline{\chi^{\eta}}_{L_1}+\overline{\chi^{\eta}}_{L_2} = \overline{\chi^{\eta}}$.

\end{remark}


In particular, the group
\[
\mathbf{R_{\eta}}:= \bigoplus_{n=0}^\infty G_{\eta}(H_n)
\]
is equipped with a ring structure that arises from the induction product. It is a commutative ring by Remark \ref{rem:ss-order}.

The decomposition described in the following proposition is a well-known property.

\begin{proposition}\label{prop:bzring-decomp}
The parabolic induction maps
\[
([V_1], \ldots, [V_r])\in G_{\eta_1}(H_{n_1})\times \cdots\times G_{\eta_r}(H_{n_r})\;\mapsto\; [\iind(V_1\boxtimes\cdots \boxtimes V_r)]\in G(H_{n_1+\ldots + n_r})\;,
\]
for $\eta_1,\ldots,\eta_r\in \C^\times$ with $\eta_i/\eta_j\notin\{v^{2k}\}_{k\in \Z}$, for all $i,j$, extend into a ring isomorphism
\[
\bigotimes_{\eta\in \C^\times/\langle v^2\rangle} \mathbf{R}_{\eta} \;\cong \;\mathbf{R}:= \bigoplus_{n=0}^\infty G(H_n)\;,
\]
which preserves irreducibility.

Moreover, for all $\eta\in \C^\times$, the equivalences in \eqref{eq:shift-eta} combine into a ring isomorphism 
\[
\sigma_{\eta}^\ast: \mathbf{R}_1 \overset{\sim}{\rightarrow}  \mathbf{R}_{\eta}\;.
\]
Thus, the full Grothendieck ring $\mathbf{R}$ has a canonical decomposition as a tensor product of a continuum of rings, each of which is isomorphic with the integral Grothendieck ring $\mathbf{R}_1$.

\end{proposition}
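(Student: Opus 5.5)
We need three things: a well-defined multiplicative map on the tensor product, its bijectivity on bases of simple classes (which also gives "preserves irreducibility"), and the shift isomorphisms $\sigma^\ast_\eta$.

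\textbf{Step 1: the map is a ring homomorphism.} Each $G_{\eta}(H_{n})$ sits inside $G(H_n)$, so the stated induction maps are well defined. By exactness of induction they are multi-additive. Transitivity of parabolic induction, $\iind_{\underline{n}}\circ(\iind\boxtimes\iind)\cong\iind$, together with Remark \ref{rem:ss-order} (commutativity up to semisimplification) shows that they assemble into a ring homomorphism
\[
\bigotimes_{\eta}\mathbf{R}_\eta\to\mathbf{R}.
\]
The tensor product is the restricted one, with almost all factors equal to $1=[H_0]$.

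\textbf{Step 2: injectivity and irreducibility of products.} Let $L_i\in\Irr_{\eta_i}(n_i)$ with pairwise inequivalent cosets $\eta_i$. We show that $L:=L_1\times\cdots\times L_r$ is simple.

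We use the Mackey formula for $\rres_{\underline{n}}\circ\iind_{\underline{n}}$, or more simply the weight decomposition. The $P_n$-weights of $L$ are shuffles of weights of the $L_i$. The subspace of $L$ on which $(y_1,\ldots,y_{n_1})$ have eigenvalues in $\eta_1v^{2\Z}$, $(y_{n_1+1},\ldots)$ in $\eta_2 v^{2\Z}$, and so on, is exactly $1\otimes(L_1\boxtimes\cdots\boxtimes L_r)$. It is a direct summand of $\rres_{\underline{n}}(L)$, since distinct cosets force central-character separation as in Remark \ref{rem:chars-cent}.

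Now take a nonzero submodule $U\subseteq L$. By Frobenius reciprocity applied to a simple quotient, the component of $\rres_{\underline{n}}(U)$ in this coset-pattern block is nonzero. Since $L_1\boxtimes\cdots\boxtimes L_r$ is simple, that component is all of it. Because that summand generates $L$, we get $U=L$. The same coset-pattern summand recovers each $L_i$ from $L$, so nonisomorphic tuples give nonisomorphic products. Thus distinct tuples go to distinct simple classes, and the map is injective.

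\textbf{Step 3: surjectivity (the main obstacle).} Let $L\in\Irr(H_n)$. Decompose its weights by coset pattern: the central character of $L$ is a single $S_n$-orbit, which determines multiplicities $n_\eta$ for each coset.

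Choose the composition $\underline{n}$ that groups the cosets. The summand of $\rres_{\underline{n}}(L)$ with the ordered coset pattern is nonzero, because some weight can be reordered into that pattern. This uses that $S_n$ acts transitively on the weights of the orbit only at the level of central characters, so we must justify that every ordering of the orbit actually occurs as a weight of $L$. This follows from the intertwiners: $\tau_i=(y_i-y_{i+1})T_i+\ldots$ is invertible on weight spaces where $y_i/y_{i+1}\neq 1,v^{\pm2}$, which holds across distinct cosets.

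Pick a simple submodule $L_1\boxtimes\cdots\boxtimes L_r$ of that summand. Frobenius reciprocity then gives a nonzero map $\iind(L_1\boxtimes\cdots\boxtimes L_r)\to L$. The source is simple by Step 2, so this map is an isomorphism.

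\textbf{Step 4: the shift isomorphisms.} The automorphism $\sigma_\eta$ is the identity on the $T_i$. It preserves the parabolic subalgebras and is compatible with $\iind$, i.e. $\sigma_\eta^\ast\iind\cong\iind(\sigma_\eta^\ast\boxtimes\cdots)$. Hence $\sigma_\eta^\ast$ is a ring isomorphism $\mathbf{R}_1\to\mathbf{R}_\eta$.
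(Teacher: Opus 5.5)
\textbf{Verdict: repairable gap in Step 2.} The paper gives no proof of this proposition; it is quoted as a well-known property, so your argument has to stand on its own. Its architecture is the standard one: a shuffle/Mackey description of the coset-pattern component, intertwiners, and Frobenius reciprocity. Several parts are fine: Step 1, Step 4 (via $h\otimes x\mapsto\sigma_\eta^{-1}(h)\otimes x$), the identification of the forward coset-pattern component of $L=\iind_{\underline{n}}(V)$, $V=L_1\boxtimes\cdots\boxtimes L_r$, with $1\otimes V$ (only the trivial shuffle produces that pattern), and the Frobenius argument closing Step 3. The gap is in Step 2 as written. There you must show that a nonzero submodule $U\subseteq L$ has nonzero component in the forward coset-pattern block, and ``Frobenius reciprocity applied to a simple quotient'' does not give this. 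The adjunction $\iind\dashv\rres$ only controls homomorphisms \emph{out of} $L$, so it tells you that every nonzero quotient of $L$ contains $V$ in its restriction. But $U$ is a submodule, and a simple quotient of $U$ is only a subquotient of $L$, not a quotient, so nothing forces its forward-pattern component to be nonzero.

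\textbf{The fix.} The missing input is exactly the intertwiner lemma you postpone to Step 3; state it first, for arbitrary finite-dimensional modules. With the paper's relations, put $\Phi_i=(y_{i+1}-y_i)T_i-(v^2-1)y_{i+1}$. Then $\Phi_i f=(s_if)\Phi_i$ for all $f\in P_n$, and $\Phi_i^2=(v^2y_i-y_{i+1})(v^2y_{i+1}-y_i)$. Hence for any module $M$ and any weight $\chi$ with $\chi_i/\chi_{i+1}\notin v^{2\Z}$, $\Phi_i$ restricts to an injection $M_\chi\hookrightarrow M_{s_i\chi}$ of generalized weight spaces. Apply this to $U$. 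Start from any weight of $U$ and stably sort its letters by coset, using only swaps of adjacent letters from different cosets; this reaches a weight of the forward pattern. So $U\cap(1\otimes V)\neq 0$, hence $U\supseteq 1\otimes V$ by simplicity of $V$, and $U=L$ because $1\otimes V$ generates $L$. With the lemma moved ahead of Step 2, both Step 2 and Step 3 go through.

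\textbf{A false sentence in Step 3.} The claim that ``every ordering of the orbit actually occurs as a weight of $L$'' is false: a segment module $Z([a,b])$ is one-dimensional with a single weight. The intertwiners only give closure under swaps of adjacent entries lying in distinct cosets, i.e.\ that every arrangement of the cosets occurs. That weaker statement is all you actually use.
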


In light of that decomposition, much of our discussion will be reduced to dealing with the integral blocks $\Rep_1(H_n)$, and their Grothendieck ring $\mathbf{R}_1$.

\subsubsection{Segment modules}
For a segment $\Delta=[a,b]\in \seg$, we write the $1$-dimensional module $Z(\Delta)\in \Irr_1(b-a+1)$, given on generators by $T_i \mapsto -1$ and $y_i\mapsto v^{2(b-i+1)}$.

For an ordered multisegment $\underline{\m} = \Delta_1\cdot\ldots\cdot \Delta_r\in \mathcal{W}(\seg)$, we now define the module
\[
M(\underline{\m}):= Z(\Delta_1)\times\cdots\times Z(\Delta_r)\in \Rep_1(H_n)\;,
\]
where $\langle \underline{\m}\rangle\in (\Z_{\geq0}^{\seg})_n$.

For a multisegment $\m\in (\Z_{\geq0}^{\seg})_n$, we may choose an ordering $\underline{\m}$ for it and define the semisimplified class of representations
\[
M(\m):= [ M(\underline{\m})]\in G_1(H_n)\;.
\]
This construction is well-defined, that is, independent of choice of ordering, due to Remark \ref{rem:ss-order}.

We can furthermore define a ring homomorphism by
\begin{equation}\label{eq:zel-map}
 \underline{\zeta} :\; \Z[\seg]\; \overset{\sim}{\to}\; \mathbf{R_1}\;,\quad \underline{\zeta}(\exp(\m)) = M(\m) \;.
\end{equation}

\subsubsection{Zelevinsky classification}\label{sect:zel}
For $n\geq0$ and an admissible ordering $\underline{\m} \in \mathcal{W}(\seg)$ of a multisegment $\m\in (\Z_{\geq0}^{\seg})_n$ we set $\zeta(\m):= M(\underline{\m})$. It is known that the isomorphism class of the representation is independent of a choice of an admissible ordering.

Modules of the form $\zeta(\m)\in \Rep_1(H_n)$, for $\m\in (\Z_{\geq0}^{\seg})_n$, are the \textit{standard}\footnote{In the context of $p$-adic groups, this class is often named `Zelevinsky-standard'. } representations.

Those modules, while possibly reducible, admit a unique simple quotient module $\zeta(\m)\to Z(\m)$.
\begin{proposition}

\begin{enumerate}
    \item (\cite[Theorem 2.6]{soll-survey}\cite[Theorem 6.5]{zel-main})
The resulting map
\[
\m\in \Z_{\geq0}^{\seg} \;\mapsto\; Z(\m)\in \Irr_1:= \bigsqcup_{n\geq0}\Irr_1(n)
\]
is a bijection.

\item (\cite[Corollary 7.5]{zel-main})
The map $\underline{\zeta}$ is a ring isomorphism. In particular, the ring $\mathbf{R_1} = \Z^{\Irr_1}$ is polynomial.

\end{enumerate}
\end{proposition}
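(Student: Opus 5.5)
Both parts are classical and are quoted from \cite{zel-main,soll-survey}. The plan is to reproduce the standard argument inside the integral block $\Rep_1(H_n)$, using only parabolic induction and restriction and weight-space bookkeeping. Via Proposition \ref{prop:bzring-decomp} and the equivalences \eqref{eq:shift-eta}, nothing is lost by working with $\eta=1$.

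\textbf{Step 1: unique simple quotient.} Fix $\m\in(\Z_{\geq0}^{\seg})_n$ and an admissible ordering $\underline{\m}$. Let $w_{\m}\in\mathcal{W}_n$ be the concatenation of the words of the one-dimensional modules $Z(\Delta_i)$, i.e.\ the descending runs $\alpha_{b}\alpha_{b-1}\cdots\alpha_a$. By the Mackey formula (Bernstein--Zelevinsky geometric lemma), the $P_n$-weights of $\zeta(\m)$ are the shuffles of these runs. I would show that admissibility (end points increasing along the ordering) forces the weight $\chi_{w_{\m}}$ to occur in $\zeta(\m)$ with multiplicity one. The generator $1\otimes(Z(\Delta_1)\boxtimes\cdots\boxtimes Z(\Delta_r))$ lies in that weight space and generates $\zeta(\m)$. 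Hence every proper submodule avoids the weight $\chi_{w_{\m}}$, so the sum of all proper submodules is proper. This gives a unique simple quotient $Z(\m)$, and $[\zeta(\m):Z(\m)]=1$.

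\textbf{Step 2: bijectivity.} For injectivity, I would recover $\m$ from $Z(\m)$ as follows. The weight $\chi_{w_{\m}}$ survives in $Z(\m)$, and applying Frobenius reciprocity to $\rres_{(n-|\Delta_r|,|\Delta_r|)}$ peels off the last segment $\Delta_r$ inductively. Concretely, $\Delta_r$ is the longest segment ending at the maximal end point for which $Z(\m)\hookrightarrow Z(\m')\times Z(\Delta_r)$ (after dualizing), and $\m'=\m-\Delta_r$ is then recovered by induction.

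For surjectivity, take $L\in\Irr_1(n)$ and choose a weight $\chi_w$ of $L$ that is maximal in a suitable lexicographic order. The corresponding simple constituent of $\rres_{(1,\ldots,1)}(L)$ glues, via the quadratic relation for $T_i$, into segment modules. Frobenius reciprocity then gives a nonzero map $M(\underline{\m})\to L$ for some ordering $\underline{\m}$. The main obstacle is arranging that this ordering can be taken admissible. I would handle this by choosing the maximal weight so that its descending runs have weakly increasing last letters, and by using that $M(\underline{\m})\cong\zeta(\m)$ for all admissible orderings. The latter follows because $Z(\Delta)\times Z(\Delta')\cong Z(\Delta')\times Z(\Delta)$ whenever neither segment precedes the other.

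\textbf{Step 3: ring isomorphism.} The map $\underline{\zeta}$ is a ring homomorphism by construction: induction is transitive and the product is commutative by Remark \ref{rem:ss-order}. It preserves the grading by $n$ and by central character $\supp(\m)$, and each graded piece is finite. By Step 1 and a Mackey argument,
\[
M(\m)=[Z(\m)]+\sum_{\n<\m}c_{\m,\n}[Z(\n)],
\]
where $<$ is the Zelevinsky order (obtained by replacing linked pairs $\Delta,\Delta'$ with $\Delta\cup\Delta',\Delta\cap\Delta'$) and $\supp(\n)=\supp(\m)$. This triangularity holds because any constituent other than $Z(\m)$ lacks the weight $\chi_{w_{\m}}$ and has a strictly smaller maximal weight. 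Unitriangularity on each finite graded piece, combined with Step 2 (the $[Z(\m)]$ form a $\Z$-basis of $\mathbf{R}_1$), shows that $\underline{\zeta}$ sends the monomial basis $\{\exp(\m)\}$ to a basis. Hence $\underline{\zeta}$ is an isomorphism, and $\mathbf{R}_1$ is polynomial on the classes $Z(\Delta)$, $\Delta\in\seg$.
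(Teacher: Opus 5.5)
\textbf{Gap in Step 1.} Your Step~1 rests on a false claim: the weight $\chi_{w_{\m}}$ need not occur with multiplicity one in $\zeta(\m)$.

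\begin{itemize}
\item For $\m=2[0,0]$, the module $\zeta(\m)=Z([0,0])\times Z([0,0])$ is $2$-dimensional with $\ch(\zeta(\m))=2\,\alpha_0\alpha_0$. More generally, a segment occurring $k$ times in $\m$ forces multiplicity at least $k!$ for every ordering, because permuting the identical runs inside a shuffle returns the same word.
\item Even without repeated segments, the claim depends on which admissible ordering you take. For $\underline{\m}=[1,1]\cdot[0,1]$ (admissible, since both segments end at $1$) one gets $\ch(\zeta(\m))=\alpha_1\alpha_0\alpha_1+2\,\alpha_1\alpha_1\alpha_0$. So $w_{\m}=\alpha_1\alpha_1\alpha_0$ has multiplicity $2$.
\end{itemize}

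Once the generalized weight space is larger than the line through the generator $1\otimes(Z(\Delta_1)\boxtimes\cdots\boxtimes Z(\Delta_r))$, nothing stops a proper submodule from meeting it. So the uniqueness of the simple quotient and $[\zeta(\m):Z(\m)]=1$ are not established. Step~3 inherits the problem, since you derive "every other constituent lacks $\chi_{w_{\m}}$" from the same multiplicity-one statement.

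\textbf{What is missing.} The paper gives no proof here; it cites Zelevinsky, whose argument replaces the single weight by the coarser composition $\underline{n_{\m}}$ and the module $\zeta(\m)^{\otimes}=\zeta(\m[d_1])\boxtimes\cdots\boxtimes\zeta(\m[d_l])$. The argument runs as follows.

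\begin{itemize}
\item By Proposition~\ref{prop:mackey}, the only Mackey bitableau whose blocks have supports $\supp(\m[d_e])$ is the trivial one. You see this by peeling off blocks in increasing order of end point.
\item Hence $\zeta(\m)^{\otimes}$ occurs exactly once in $\rres_{\underline{n_{\m}}}(\zeta(\m))$.
\item The subspace $1\otimes\zeta(\m)^{\otimes}$ is an $H_{\underline{n_{\m}}}$-submodule that generates $\zeta(\m)$. So every proper submodule $U$ has $[\rres_{\underline{n_{\m}}}(U):\zeta(\m)^{\otimes}]=0$.
\item This gives the unique simple quotient and $[\zeta(\m):Z(\m)]=1$.
\end{itemize}

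This only works if $\zeta(\m)^{\otimes}$ is simple. That requires products of segment modules with a common end point, in particular $Z(\Delta)^{\times k}$, to be irreducible. This is Lemma~\ref{lemma:m_circ}\eqref{it:mc1}, essentially Zelevinsky's irreducibility theorem for pairwise unlinked segments. It is the genuinely missing ingredient. It also underlies the isomorphism $Z(\Delta)\times Z(\Delta')\cong Z(\Delta')\times Z(\Delta)$ that you invoke without proof in Step~2.

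\textbf{Further points in Step 2.}
\begin{itemize}
\item The surjectivity argument is only gestured at ("glues via the quadratic relation", choice of a maximal weight). Yet this is where the real content lies, namely ensuring the resulting ordering is admissible.
\item The injectivity criterion ("$\Delta_r$ is the longest segment \ldots") is asserted, not proved.
\item The run of $[a,b]$ is $\alpha_b\cdots\alpha_a$, so admissibility concerns the \emph{first} letters of the runs, not the last.
\end{itemize}
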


The isomorphism $\underline{\zeta}$ is easily extendable to an isomorphism 
\[
\widetilde{\underline{\zeta}}: \Z\left[\widetilde{\seg}\right] \xrightarrow{\sim} \mathbf{R}
\]
of rings, by decomposing both sides according to \eqref{eq:decomp-ring} and Proposition \ref{prop:bzring-decomp}, and setting
\begin{equation}\label{eq:intertwHeta}
 \sigma_\eta^\ast \circ \underline{\zeta} \circ\alpha_\eta^{-1} :\Z[\seg(\eta)] \xrightarrow{\sim} \mathbf{R}_\eta
\end{equation}
on each tensor factor.


\subsection{Mackey theory}

The semisimplified modules $[\rres_{\underline{n}}(M(\m))]\in G_1(H_{\underline{n}})$ that arise from restriction of segments modules have a well-known description in terms of the appropriate version of Mackey theory. We will now set the notations for that description.

For a segment $[x,y]\in \seg$ and an integer $s\geq 1$, let us consider the set of words
\[
S'(x,y,s) = \left\{
(a_{j}, e_{j})_{j=1}^{k}\in \mathcal{W}(\Z\times\Z) \,\middle|\,
\begin{array}{l}
x = a_{k} < \cdots < a_1 \leq y, \\
1\leq  e_1 < \cdots < e_{k} \leq s
\end{array}
\right\}.
\]

For an ordered multisegment $\underline{\m} = [x_1,y_1]\cdots[x_r,y_r]\in \mathcal{W}(\seg)$, we consider the set of bitableaux
\[
\mathcal{K}(\underline{\m},s) = \{\mathcal{Q}_1\cdots \mathcal{Q}_r\in \tab(\Z\times \Z) \,:\, \mathcal{Q}_i\in S'(x_i,y_i,s),\,\forall\,1\leq i\leq r\}\;.
\]

Given an bitableau $\mathcal{Q} = (a^i_j,e^i_j)_{(i,j)\in I_{\mathcal{Q}}}\in \KK $, let us write a decomposition
\[
I_{\mathcal{Q}} = I_{\mathcal{Q}}^1 \sqcup\cdots \sqcup I_{\mathcal{Q}}^s\;,
\]
so that $e^i_j =e$, for all $1\leq e\leq s$ and $(i,j)\in I_{\mathcal{Q}}^e$.

Subsequently, we define the multisegment 
\[
\m_{\mathcal{Q},e}:= \sum_{(i,j)\in I_{\mathcal{Q}}^e} [a^i_j, a^i_{j-1}-1]\in \Z_{\geq0}^{\seg}\;,
\]
with $a^i_0=y_i+1$ being assumed.

In these terms we denote the semisimplified class of representations
\[
M(\mathcal{Q}):= M\left(\m_{\mathcal{Q},1}\right)\boxtimes \cdots\boxtimes M\left(\m_{\mathcal{Q},s}\right)\in G_1(H_{\underline{n}(\mathcal{Q})})\;,
\]

for all $\mathcal{Q}\in \KK$. Here, $\underline{n}(\mathcal{Q})=(n_{\m_{\mathcal{Q},1}},\ldots, n_{\m_{\mathcal{Q},s}})$.

\begin{proposition}\label{prop:mackey}

Let $\underline{\m}$ be any ordering of a given multisegment $\m \in (\Z_{\geq0}^{\seg})_n$.

When considering the set of compositions 
\[
C(n, s) = \left\{(n_1,\ldots,n_s)\in (\mathbb{Z}_{\geq0})^{s}:\;\sum_{i=1}^s n_i=n\right\}\;,
\]
we have an equality
    \[
\sum_{\underline{n}\in C(n,s)}\rres_{H_{\underline{n}}}
\bigl(M(\m)\bigr)= \sum_{\mathcal{Q}\in \KK} M(\mathcal{Q})
\]

    in the corresponding sum of Grothrendieck groups $\bigoplus_{\underline{n}\in C(n,s)} G_1(H_{\underline{n}})$. 
\end{proposition}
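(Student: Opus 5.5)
We reduce the statement to the one-segment case and then to the Mackey (Bernstein--Zelevinsky geometric lemma) formula for induced modules.

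\textbf{Step 1 (one segment).} For $\Delta=[x,y]$, the module $Z(\Delta)$ is one-dimensional, with $T_i\mapsto -1$ and $y_i\mapsto v^{2(y-i+1)}$. So its restriction to any $H_{\underline n}$ with $\underline n\in C(y-x+1,s)$ is the outer tensor product of the one-dimensional modules on consecutive blocks of the string $v^{2y},v^{2(y-1)},\ldots,v^{2x}$. Reading from the top, block $e$ carries the segment $[a,a'-1]$, where the blocks cut the segment into consecutive pieces. The nonempty pieces are listed by their left endpoints $a_1>\cdots>a_k=x$, labelled by the indices $e_1<\cdots<e_k$ of the parts that receive them. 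The remaining parts are empty and contribute $H_0=\C$. This is exactly a word in $S'(x,y,s)$, and $\m_{\mathcal{Q},e}$ records the piece in block $e$. Summing over all $\underline n$ gives the formula for $r=1$.

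\textbf{Step 2 (geometric lemma).} Fix the ordering $\underline{\m}=\Delta_1\cdots\Delta_r$, so that $M(\underline{\m})=\iind_{(n_{\Delta_1},\ldots,n_{\Delta_r})}(Z(\Delta_1)\boxtimes\cdots\boxtimes Z(\Delta_r))$. We apply the Mackey formula for affine Hecke algebras of type $\mathrm{GL}$, the analogue of Zelevinsky's geometric lemma \cite{zel-main}, which holds for $H_n$ via the Bernstein presentation. It gives, in the Grothendieck group,
\[
[\rres_{\underline n}\iind_{\underline m}(V)]=\sum_{w}\bigl[\iind^{\underline n}\circ w\circ\rres_{\underline m\cap w^{-1}\underline n}(V)\bigr],
\]
where $w$ runs over minimal double coset representatives, equivalently over $r\times s$ matrices of nonnegative integers $(n_{ie})$ with row sums $n_{\Delta_i}$ and column sums $n_e$. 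For each such matrix, $\rres$ restricts each $Z(\Delta_i)$ to the composition $(n_{i1},\ldots,n_{is})$. By Step 1 this gives a word $\mathcal{Q}_i\in S'(x_i,y_i,s)$. Recombining along the columns $e$ and inducing inside $H_{n_e}$ yields $\times_{i}Z(\text{piece of }\Delta_i\text{ in block }e)$, whose class is $M(\m_{\mathcal{Q},e})$ by Remark~\ref{rem:ss-order}. The Weyl twist by $w$ only permutes tensor factors, and it does not alter the segment modules, since the $y$-eigenvalues are carried along unchanged. The pairs (matrix, choice of cuts) therefore biject with $\mathcal{Q}\in\KK$. Indeed, the entries $n_{ie}$ are determined by $\mathcal{Q}_i$, with $n_{ie}=0$ exactly when $e$ does not occur among the $e^i_j$. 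Summing over $\underline n\in C(n,s)$ gives the claim. Independence of the chosen ordering follows from Remark~\ref{rem:ss-order}.

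\textbf{Main obstacle.} The main obstacle is justifying the geometric lemma for $H_n$ in the Grothendieck group, including that the twisted module produced by $w$ is genuinely the outer product of the pieces. I would cite it from the literature, for example \cite{zel-main} transported via the Borel--Casselman equivalence, or Kato/Grojnowski's version for affine Hecke algebras. I would check the twist on generators $y_i$, where $w$ acts by a permutation of the $y$'s and $T$-relations are preserved up to the filtration. Once this lemma is granted, the remaining bookkeeping is routine.
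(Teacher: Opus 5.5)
Your proposal is correct and follows essentially the same route as the paper. The paper computes the restriction of the one-dimensional module $Z([x,y])$, which cuts the segment into consecutive pieces $[a_j,a_{j-1}-1]$, and then invokes the standard semisimplified Mackey theory for $H_n$, citing \cite[Section 2.2]{lapid-minguez-parab}. Your matrices with prescribed row sums, giving the bijection with $\KK$, spell out the bookkeeping that the paper leaves implicit.
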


\begin{proof}
For a segment $[x,y]\in \seg_n$ and $n_1+\ldots+ n_s=n$, the restricted module 
\[
\rres_{(n_1,\ldots,n_s)}(Z([x,y]))= L_1\boxtimes \cdots \boxtimes L_s
\]
is $1$-dimensional, and evidently, $L_j\cong Z([a_j, a_{j-1}-1])$, for all $1\leq j\leq s$, where $a_j = y+1- (n_1+\ldots +n_{j})$.

The statement now follows from a standard semisimplified version of Mackey theory in this setting, as explicated in \cite[Section 2.2]{lapid-minguez-parab}.

\end{proof}


\subsection{Characters}\label{sect:ch}

A homomorphism $\chi_w:H_{(1,\ldots,1)}\cong P_n=\mathbb{C}[y_1^{\pm1},\ldots,y_n^{\pm1}]\to \mathbb{C}$, as constructed in Section \ref{sect:integral-block}, for a given word $w\in \mathcal{W}_n$, may be viewed as an element of $\Irr_1(H_{(1,\ldots,1)})$. Thus, we may write a group isomorphism
\[
\bar{\tau}:=\sum_{w\in \mathcal{W}_n} c_ww\in R_n= \Z^{\mathcal{W}_n}\quad\leftrightarrow\quad \tau=\sum_{w\in \mathcal{W}_n} c_w[\chi_w]\in G_1(H_{(1,\ldots,1)})\;.
\]
The resulting additive map that is given by
\[
\mathrm{ch}: G_1(H_n)\to R_n\,,\quad \mathrm{ch}([V]) = \overline{[\rres_{(1,\ldots,1)}(V)]}\;,\; V\in \Rep_1(H_n)\;,
\]
is the \textit{character} map for affine Hecke algebra modules.

It is known (e.g. \cite[Theorem 5.3.1]{klsh-book})  to be an injective map, which we rewrite explicitly as 
\[
\mathrm{ch}([V]) = \sum_{w\in \mathcal{W}_n} \dim(V_{\chi_w})\,w\;,
\]
for any module $V\in \Rep_1(H_n)$.

For convenience, we will denote the multiplicities in the last equation as \[
\dim(V_{\chi_w}) = [V:w]\;.
\]

It follows from Remark \ref{rem:chars-cent}, that whenever a module $V\in \Rep_{\overline{\chi}}(H_n)$ has $[V:w]\neq0$, for a word $w\in \mathcal{W}_n$, we must have $\langle w \rangle = \overline{\chi}$.

For example, the $1$-dimensional segment $H_2$-module $Z([0,1])$ has
\(\ch(Z([0,1]))=\alpha_0\alpha_1\), while the standard $H_2$-module $M=Z([0,0])\times Z([1,1])$ is $2$-dimensional and has
\(\ch(M)=\alpha_0\alpha_1+\alpha_1\alpha_0\).

\subsubsection{Restriction}

It is clear from definitions, that for any composition $(n_1,\ldots,n_r)$ of $n$, a formula

\begin{equation}
    \mathrm{ch}([V]) = \sum_{L_1,\ldots, L_r} [\rres_{(n_1,\ldots,n_r)} (V): L_1\boxtimes \cdots \boxtimes L_r] \,\mathrm{ch}(L_1)\cdot\ldots\cdot \mathrm{ch}(L_r)\in R_n
\end{equation}
holds in the free ring $R$, for any module $V\in \Rep_1(H_n)$, where the summation is over tuples of simple modules $L_i \in \Irr_1(n_i)$, $i=1,\ldots,r$.

In particular, for any word $w\in \mathcal{W}_n$, we may write
\begin{equation}\label{eq:word-concat}
[V:w]=  \sum_{L_1,\ldots, L_r} [\rres_{(n_1,\ldots,n_r)} (V): L_1\boxtimes \cdots \boxtimes L_r] [L_1:w_1]\cdot\ldots \cdot [L_r:w_r]\;,
\end{equation}
where $w = w_1\cdot\ldots\cdot w_r$ is the unique decomposition with $w_i\in \mathcal{W}_{n_i}$.

\subsubsection{Spherical modules and descending words}

Let us recall some known properties that are exhibited by spherical multisegments in the Zelevinsky classification.

\begin{lemma}\label{lemma:m_circ}
\begin{enumerate}

\item\label{it:mc1} For a spherical multisegment $\m = \m^\circ\in \Z_{\geq0}^{\seg}$, the module $M(\m)$ is simple. In particular, $[\zeta(\m)]= M(\m)= Z(\m)$.
    
\item\label{it:mc3} For any multisegment $\m\in \Z_{\geq0}^{\seg}$, the multiplicity $[M(\m):Z(\m^\circ)]$ equals $1$.

\item\label{it:mc33} For any multisegment $\m\in \Z_{\geq0}^{\seg}$, the standard representation $\zeta(\m)$ has a unique simple submodule, which is isomorphic to $Z(\m^\circ)$.
    
\end{enumerate}
\end{lemma}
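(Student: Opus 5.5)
We would prove the three items in order. Item (1) is the classical irreducibility criterion of Zelevinsky: a product $Z(\Delta_1)\times\cdots\times Z(\Delta_r)$ is irreducible whenever no two of the $\Delta_i$ are linked. Sphericity, as defined above through the precedence relation, says exactly that no pair is linked. We would quote \cite[Theorem 4.2 / Proposition 4.6]{zel-main} (transported to $H_n$ through the standard equivalence), or reprove it by the character argument sketched for (2) below. Once $M(\m)$ is simple, it is its own unique quotient, so $[\zeta(\m)]=M(\m)=Z(\m)$.

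For item (2), the plan is to find a single weight that detects $Z(\m^\circ)$ with multiplicity one, using the injective character map $\ch$. Write $\beta=\supp(\m)$ and $c_i=\beta(i)$. Let $w_\beta\in\mathcal{W}_n$ be the \emph{descending word} in which all copies of $\alpha_i$ precede all copies of $\alpha_{i-1}$, for every $i$. Each $\ch(Z(\Delta))$, with $\Delta=[a,b]$, is the single word $\alpha_b\alpha_{b-1}\cdots\alpha_a$. By the Mackey formula (Proposition \ref{prop:mackey} with $s=n$, equivalently \eqref{eq:word-concat}), $\ch(M(\underline{\m}))$ is the shuffle product of these segment words.

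\begin{itemize}
\item A shuffle producing $w_\beta$ amounts to choosing, independently for each $i$, a bijection between the $c_i$ positions of $\alpha_i$ in $w_\beta$ and the $c_i$ segments (counted with multiplicity as labeled factors) that contain $i$.
\item The decreasing order inside each segment word is then automatic, since $w_\beta$ is itself descending.
\item Hence $[M(\m):w_\beta]=\prod_i c_i!$, a number that depends only on $\beta$.
\end{itemize}

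Apply this to $\m^\circ$. By (1), $M(\m^\circ)=Z(\m^\circ)$, so $[Z(\m^\circ):w_\beta]=\prod_i c_i!>0$. On the other hand, $[M(\m):w_\beta]=\sum_L [M(\m):L]\,[L:w_\beta]$ with all terms nonnegative, and $M(\m)$ contains $Z(\m^\circ)$ at least once (we would take this from Zelevinsky's ordering result, that $Z(\m^\circ)$ is the minimal element occurring in every $M(\n)$ with $\supp(\n)=\beta$). Comparing the two equal totals $\prod_i c_i!$ gives $[M(\m):Z(\m^\circ)]=1$. It also shows that no other constituent $L$ of $M(\m)$ has $w_\beta$ as a weight.

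For item (3), let $L\hookrightarrow\zeta(\m)$ be any simple submodule. By (2), it suffices to show $[L:w_\beta]\neq0$. Indeed, then $L\cong Z(\m^\circ)$ by the exclusivity just proved, and uniqueness follows from the multiplicity being $1$.

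To get $[L:w_\beta]\neq 0$, we would use the second adjointness for $H_n$ (induction is also right adjoint to restriction twisted by the longest element of the parabolic). An embedding $L\hookrightarrow Z(\Delta_1)\times\cdots\times Z(\Delta_r)$ then yields a nonzero map from the opposite restriction of $L$ to $Z(\Delta_r)\boxtimes\cdots\boxtimes Z(\Delta_1)$. Consequently $L$ has as a weight the concatenation of the segment words taken in \emph{reversed} order. For an admissible ordering, this places segments with larger endpoints first.

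This is the step we expect to be the main obstacle: the reversed concatenation is generally not yet $w_\beta$. To close the gap, we would use the intertwining relations inside $L$. For adjacent letters $\alpha_j\alpha_i$ with $|i-j|\neq1$, the operator $T_k$ (suitably normalized) moves a weight vector of weight $\cdots\alpha_j\alpha_i\cdots$ to one of weight $\cdots\alpha_i\alpha_j\cdots$ without vanishing. We would apply these moves step by step, sorting the word into descending order while staying inside $L$. The endpoint ordering in an admissible arrangement is what should guarantee that every required swap is between non-adjacent letters. If this direct sorting argument becomes messy, the fallback is to pass through the contragredient duality $\zeta(\m)^\vee\cong$ the product in reversed order, which converts the submodule statement into a statement about the unique quotient of a costandard module.
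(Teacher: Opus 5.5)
Your items (1) and (2) are fine. For (1) you invoke the same Zelevinsky irreducibility criterion that the paper imports; the paper proves all three items by transferring to $p$-adic $\mathrm{GL}_n$ and citing the literature. For (2), your count $[M(\m):w_\beta]=\prod_i c_i!$ is correct and gives a more hands-on argument, provided you import that $Z(\m^\circ)$ occurs in $M(\m)$.

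The gap is in (3), at the step you flagged, and it is not a bookkeeping issue. You claim that an admissible ordering makes every swap needed to turn the reversed concatenation $u$ into $w_\beta$ a swap of letters $\alpha_i,\alpha_j$ with $|i-j|\geq 2$. This is false. Any sorting must interchange every inverted pair of letter occurrences. The word $u$ has an $\alpha_i$ before an $\alpha_{i+1}$ whenever a segment $[a,b]$ is followed in $u$ by a segment $[a',b']$ with $a<b'$. Already for $\m=2[0,1]$ one has $u=\alpha_1\alpha_0\alpha_1\alpha_0$ for every admissible ordering. Every constituent of $\rres_{(1,2,1)}(L)$ carrying this weight is $Z([1,1])\boxtimes Z([0,0]+[1,1])\boxtimes Z([0,0])$, whose middle factor has the single weight $\alpha_0\alpha_1$. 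So no local intertwiner produces $\alpha_1\alpha_1\alpha_0\alpha_0$, even though $L=\zeta(\m)$ does have that weight. The same configuration occurs for non-spherical $\m$, e.g. $[0,0]+[1,1]+2[0,1]$. The contragredient fallback only restates (3) as the equivalent claim about the head of a costandard module. As written, (3) is unproved.

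A repair inside your own framework is to peel off the largest letter instead of sorting.

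\emph{Setup.} Let $d=\max E(\m)$ and $c=\supp(\m)(d)$. Then $\zeta(\m)\cong\zeta(\m-\m[d])\times Z(\m[d])$, with $Z(\m[d])$ simple by (1).

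\emph{A weight starting with $\alpha_d^c$.} With $k=|\supp(\m[d])|$, second adjointness gives a nonzero map $\rres_{(k,n-k)}(L)\to Z(\m[d])\boxtimes\zeta(\m-\m[d])$. Its image is $Z(\m[d])\boxtimes U$ with $U\neq 0$. Hence $L$ has a weight beginning with $\alpha_d^c$.

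\emph{The Mackey step.} Let $F$ take the generalized eigenspace of $\rres_{(c,n-c)}$ on which $y_1,\ldots,y_c$ all have eigenvalue $v^{2d}$; $F$ is exact. In the Mackey filtration of $\zeta(\m)$ only one layer survives $F$. This gives $F(\zeta(\m))\cong Z([d,d])^{\times c}\boxtimes\zeta(\m^-)$, where $\m^-$ deletes $d$ from the $c$ segments ending at $d$; the inherited order is still admissible.

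\emph{Induction.} Thus $0\neq F(L)=Z([d,d])^{\times c}\boxtimes U'$ with $U'\subseteq\zeta(\m^-)$. Induction on $n$, applied to a simple submodule of $U'$, gives $L$ the weight $\alpha_d^c\,w_{\beta-c\alpha_d}=w_\beta$.

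\emph{Removing the citation in (2).} Let $V$ be simple with $[V:w_\beta]\neq 0$. Then $V$ is a quotient of $\iind(\chi_{w_\beta})$. Also $[V:w_\beta]$ is a positive multiple of $\prod_i c_i!$, since every factor of the $w_\beta$-part of its restriction is $\boxtimes_i Z([i,i])^{\times c_i}$. But $[\iind(\chi_{w_\beta}):w_\beta]=\prod_i c_i!$, so such a $V$ is unique and hence equals $Z(\m^\circ)$. This gives the existence needed in (2) and identifies the socle in (3).
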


\begin{proof}
Using the standard categorical equivalences that originate in \cite{Bo} (see, for example, \cite[Section 3.2.2]{gur-jlms} for a detailed account), we may transfer the problem to the category of smooth representations of $\mathrm{GL}_n(F)$, where $F$ is a $p$-adic field. In this setting, the statements are well known, though often not stated explicitly in a single place.

A convenient reference is provided by \cite{ming-secher}: part~\eqref{it:mc1} follows from \cite[Theorem 3.10]{ming-secher}, while part~\eqref{it:mc3} is a consequence of \cite[Proposition 4.4]{ming-secher} together with \cite[Lemma 4.1]{ming-secher}.

Finally, part~\eqref{it:mc33} follows from the arguments in \cite[Proposition 2.3]{ky-invent}.
\end{proof}

We say that a word $w = \alpha_{i_1}\cdot\ldots\alpha_{i_k}\in \mathcal{W}(\mathcal{I})$ is \textit{descending}, when $i_1\geq  \ldots \geq i_k$.

It is easy to see that for a multisegment $\m\in (\Z_{\geq0}^{\seg})_n$, there is a unique descending word $\pi(\m)\in \mathcal{W}_n$ with $\langle \pi(\m)\rangle = \supp(\m)$. It is also clear that $\pi(\m) = \pi(\m^\circ)$, and we may denote 
\[
w_\beta = \pi(\m)\;,
\]
by choosing any $\m\in \Z_{\geq0}^{\seg}$ with $\supp(\m)=\beta\in \Z_{\geq0}^{\mathcal{I}}$.

Let us denote the multiplicity $r_{\beta}:=[Z(\m_\beta):w_\beta]$, for each $\beta\in \Z_{\geq0}^{\mathcal{I}}$.

\begin{proposition}\label{prop:descending-spherical}
For any $\beta\in \Z_{\geq0}^{\mathcal{I}}$, $r_\beta>0$ holds, and for any module $V\in \Rep_1(H_{|\beta|})$, we have an equality $[V: w_{\beta}] = r_{\beta}\cdot [V:Z(\m_\beta)]$ of multiplicities.
\end{proposition}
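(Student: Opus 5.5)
We must show two things: $r_\beta>0$, and that among the simple modules with central character $\beta$, the descending word $w_\beta$ occurs only in $Z(\m_\beta)$. Since $[V:w]$ is additive in $[V]$, the second claim reduces to the case of a simple module $V=L\in\Irr_1(|\beta|)$: we need $[L:w_\beta]=0$ whenever $L\not\cong Z(\m_\beta)$. The first claim will come from a direct construction.

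\textbf{Positivity of $r_\beta$.} By Lemma \ref{lemma:m_circ}\eqref{it:mc1}, $Z(\m_\beta)=M(\m_\beta)$. For a segment $[a,b]$, the module $Z([a,b])$ has character $\alpha_b\alpha_{b-1}\cdots\alpha_a$, which is a descending word. I will order the segments of $\m_\beta$ arbitrarily and apply Proposition \ref{prop:mackey} with all parts equal to $1$, i.e. the shuffle formula. It shows that $\ch(M(\m_\beta))$ is the sum of all shuffles of these descending words. The sorted descending shuffle $w_\beta$ is one of them, so $r_\beta\ge1$.

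\textbf{Vanishing on other simples.} Suppose $[L:w_\beta]\neq0$. Write $w_\beta=\alpha_{d_1}^{c_1}\cdots\alpha_{d_t}^{c_t}$ with $d_1>\dots>d_t$. Restrict $L$ to the composition $(c_1,\dots,c_t)$ and apply \eqref{eq:word-concat}. Some simple subquotient $L_1\boxtimes\cdots\boxtimes L_t$ of $\rres(L)$ must then have $[L_i:\alpha_{d_i}^{c_i}]\neq0$, so each $L_i$ has central character $c_i\alpha_{d_i}$. The only simple module with that central character is $Z([d_i,d_i])^{\times c_i}$, which is irreducible since $c_i[d_i,d_i]$ is spherical. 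By Frobenius reciprocity (or rather its Grothendieck-group shadow), $L$ is then a subquotient of $Z(c_1[d_1,d_1])\times\cdots\times Z(c_t[d_t,d_t])$.

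The intended argument is to get a nonzero map $L\to$ this induced module, using the second adjointness / Jacquet-module argument, or the version of Frobenius reciprocity where the exponent is extremal. This product, taken in decreasing order, is $M(\underline{\m})$ for $\m=\sum_i c_i[d_i,d_i]$, and this ordering is the reverse of admissible. Thus it is the contragredient-type standard module whose unique simple submodule, by Lemma \ref{lemma:m_circ}\eqref{it:mc33} applied through the usual duality, is $Z(\m^\circ)=Z(\m_\beta)$. This would force $L\cong Z(\m_\beta)$.

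\textbf{Main obstacle.} The hard part is justifying that $L$ actually embeds into (or is a quotient of) that standard module, rather than merely appearing as a subquotient. A plain subquotient statement is insufficient, since every simple with central character $\beta$ is a subquotient of $M(\m)$. My first attempt would use the extremality of the weight $w_\beta$. The weight $\chi_{w_\beta}$ is lexicographically extremal, so $P_n$-weight vectors of this weight in $L$ generate an $H_{(c_1,\dots,c_t)}$-submodule of $\rres(L)$, not just a subquotient. Frobenius reciprocity in its injective form then gives a nonzero, hence injective, map from $L$ into the coinduced module, which is isomorphic to the induced module in the opposite order. If this becomes technically messy, I would instead transfer to $p$-adic $\mathrm{GL}_n$ as in the proof of Lemma \ref{lemma:m_circ} and cite the known fact that the spherical/generic-type representation is the unique one whose Jacquet module contains the extremal exponent. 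Alternatively, I would use the geometric realization: $w_\beta$ corresponds to the open orbit, or equivalently to the sign-twisted spherical vector, whose multiplicity in $Z(\m)$ is zero unless $\m=\m_\beta$.
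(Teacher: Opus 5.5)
Your proof that $r_\beta>0$, via the shuffle expansion of $\ch(M(\m_\beta))$ with the sorted merge equal to $w_\beta$, is correct. It is also more direct than the paper's, which gets positivity by running its adjunction argument backwards.

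\textbf{The gap.} It sits in the vanishing part, exactly at the step you call the main obstacle, and the resolution you write down has sub and quotient reversed. You propose to embed $L$ into the decreasing-order product $Z(c_1[d_1,d_1])\times\cdots\times Z(c_t[d_t,d_t])$, and you assert that its unique simple submodule is $Z(\m^\circ)$. For this reverse-admissible product it is the other way round: $Z(\m^\circ)$ is its unique simple \emph{quotient}, and its socle is $Z(\m)$, where $\m=\sum_i c_i[d_i,d_i]$.

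Already for $\beta=\alpha_0+\alpha_1$ this fails. The module $Z([1,1])\times Z([0,0])=\iind_{(1,1)}(\chi_{\alpha_1\alpha_0})$ has head $Z([0,1])=Z(\m_\beta)$, by Frobenius reciprocity, since $Z([0,1])$ is the only simple module with weight $\alpha_1\alpha_0$. Its socle is $Z([0,0]+[1,1])$. So the embedding you aim for does not exist for $L=Z(\m_\beta)$, and if it did, it would identify $L$ with the wrong module.

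Your fallback paragraph also applies the wrong adjunction. A submodule $X\subseteq\rres(L)$ gives, via $\iind\dashv\rres$, a surjection $\iind(X)\twoheadrightarrow L$. It does not give an injection $L\hookrightarrow\mathrm{Coind}(X)$; that would need $X$ as a quotient of $\rres(L)$.

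\textbf{The missing idea.} No extremality is needed.
\begin{enumerate}
\item Restrict all the way to $P_n=H_{(1,\ldots,1)}$. Then $\rres_{(1,\ldots,1)}(L)$ is the direct sum of its generalized weight spaces, and each contains an honest eigenvector. So $[L:w_\beta]\neq0$ already gives $\chi_{w_\beta}\hookrightarrow\rres_{(1,\ldots,1)}(L)$.
\item Frobenius reciprocity then makes $L$ a quotient of $\iind_{(1,\ldots,1)}(\chi_{w_\beta})$, the decreasing-order product.
\item The paper then uses the symmetry of the induction product (\cite[Proposition 2.2]{gur-jems}), essentially the coinduction-versus-reversed-induction fact you allude to. This makes $L$ a submodule of $\iind_{(1,\ldots,1)}(\chi_{w_\beta^\vee})$, the increasing-order product, which is the standard module $\zeta(\m)$.
\item Lemma \ref{lemma:m_circ}\eqref{it:mc33} then gives $L\cong Z(\m^\circ)=Z(\m_\beta)$.
\end{enumerate}

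Your intermediate restriction to $(c_1,\ldots,c_t)$ can be made to work the same way. The central-character block of $\rres_{(c_1,\ldots,c_t)}(L)$ is a direct summand whose composition factors are all $X=Z(c_1[d_1,d_1])\boxtimes\cdots\boxtimes Z(c_t[d_t,d_t])$, so $X$ is both a submodule and a quotient. This buys nothing over the torus, though. In either version the bookkeeping must end with $L$ inside the admissible-order product, not the reversed one.
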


\begin{proof}
By additivity, for the second statement it is enough to show that whenever $V$ is simple and $[V:w_{\beta}]$ non-zero, we must have $[V]= Z(\m_\beta)$.

Indeed, when $[V:w_{\beta}]$ is non-zero, $\chi_{w_{\beta}}$ must appear as a submodule of $\rres_{(1,\ldots,1)}(V)$. By functor adjunction, if $V$ is assumed to be simple, we see that $V$ is isomorphic with a quotient module of $\iind_{(1,\ldots,1)}(\chi_{w_{\beta}})$.

Now, a known symmetry of the induction product (e.g. \cite[Proposition 2.2]{gur-jems}) then implies that $V$ is isomorphic to a submodule of $\iind_{(1,\ldots,1)}(\chi_{w^\vee_{\beta}})$, where $w^\vee_{\beta}\in \mathcal{W}_n$ denotes the reading of the word $w_{\beta}$ in the reversed order.

Note, that when writing $w_\beta= \alpha_{i_1}\cdots \alpha_{i_n}$, we see that $\iind_{(1,\ldots,1)}(\chi_{w^\vee_{\beta}})= M(\underline{\m})$, for the ordered multisegment 
\[
\underline{\m} = [i_n,i_n]\cdots [i_1,i_1]\in \mathcal{W}(\seg)\;.
\]
Moreover, since $w_\beta$ was a descending word, $\underline{\m}$ becomes an admissible ordering of $\langle \underline{\m}\rangle\in \Z_{\geq0}^{\seg}$. 

Thus, $M(\underline{\m})$ is a standard representation. By Lemma \ref{lemma:m_circ}\eqref{it:mc33}, we obtain $[V]= Z(\langle \underline{\m}\rangle^\circ) = Z(\m_\beta)$.

Finally, for the first statement, note that all preceding arguments may be reversed to produce $\chi_{w_\beta}$ as a submodule of $\rres_{(1,\ldots,1)}(Z(\m_\beta))$, and to imply that $r_\beta\neq0$.

\end{proof}

\subsubsection{Indicator modules and words}\label{sect:indicators}

For a multisegment $\m\in (\Z_{\geq0}^{\seg})_n$ with $E(\m) = \{d_1<\ldots< d_{l}\}$, let us write $\underline{n_{\m}} = (|\beta^{\m}_1|,\ldots,|\beta^{\m}_l|)$ for the composition of $n= n_{\m}$, where $\beta^{\m}_i= \supp(\m[d_i])$, $i=1,\ldots,l$, and $l_{\m}=l$.

A moment's reflection would show that the tuple $(\beta^{\m}_1,\ldots,\beta^{\m}_{l_{\m}})$ in fact determines the multisegment $\m$ uniquely.

Let us denote the \textit{indicator word} 
\[
w(\m):= w_{\beta^{\m}_1}\cdots w_{\beta^{\m}_{l_{\m}}}\in \mathcal{W}_{n_{\m}}\;,
\]
and the integer $r_{\m}:=r_{\beta^{\m}_1}\cdots r_{\beta^{\m}_{l_{m}}}$.

We define
\[
\zeta(\m)^{\otimes}:= \zeta(\m[d_1])\boxtimes \cdots \boxtimes \zeta(\m[d_{l_{\m}}]) \in \Rep_1(H_{\underline{n_{\m}}})
\]
to be the \textit{indicator module} of $\m$.

By Lemma \ref{lemma:m_circ}\eqref{it:mc1}, indicator modules are simple. It also follow from the construction of standard modules that
\[
\zeta(\m) = \iind_{\underline{n_{\m}}}(\zeta(\m)^{\otimes})
\]
holds.

Given a simple module $L= Z(\m)\in \Irr_1$, let us denote $\zeta(L):= \zeta(\m)$, $\zeta(L)^\otimes:= \zeta(\m)^{\otimes}$, $w(L):=w(\m)$, $r_L:=r_{\m}$ and $\underline{n_L}:= \underline{n_{\m}}$.

\begin{proposition}\label{prop:char-word}
For any simple module $L\in \Irr_1(n)$ and any module $V\in \Rep_1(H_{n})$, we have an equality 
\[
[V: w(L)] = r_{L}\cdot [\rres_{\underline{n_L}} (V) : \zeta(L)^\otimes]
\]
of multiplicities.

\end{proposition}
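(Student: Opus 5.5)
The plan is to apply the restriction formula \eqref{eq:word-concat} to the composition $\underline{n_L}=(|\beta_1|,\ldots,|\beta_l|)$, where $\beta_i=\beta^{\m}_i$ and $L=Z(\m)$, together with the word $w(L)=w_{\beta_1}\cdots w_{\beta_l}$. Since each $w_{\beta_i}\in\mathcal{W}_{|\beta_i|}$, the decomposition of $w(L)$ required in \eqref{eq:word-concat} is exactly $w(L)=w_{\beta_1}\cdot\ldots\cdot w_{\beta_l}$. This gives
\[
[V:w(L)]=\sum_{L_1,\ldots,L_l}[\rres_{\underline{n_L}}(V):L_1\boxtimes\cdots\boxtimes L_l]\,[L_1:w_{\beta_1}]\cdots[L_l:w_{\beta_l}],
\]
where the sum runs over $L_i\in\Irr_1(|\beta_i|)$.

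Next, I would evaluate each factor $[L_i:w_{\beta_i}]$ by applying Proposition \ref{prop:descending-spherical} to the simple module $V=L_i$. This yields $[L_i:w_{\beta_i}]=r_{\beta_i}[L_i:Z(\m_{\beta_i})]$, which equals $r_{\beta_i}$ if $L_i\cong Z(\m_{\beta_i})$ and vanishes otherwise. Consequently only the single term $L_i=Z(\m_{\beta_i})$ for all $i$ survives, and it carries the coefficient $r_{\beta_1}\cdots r_{\beta_l}=r_{\m}=r_L$.

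It then remains to identify $Z(\m_{\beta_1})\boxtimes\cdots\boxtimes Z(\m_{\beta_l})$ with $\zeta(L)^\otimes=\zeta(\m[d_1])\boxtimes\cdots\boxtimes\zeta(\m[d_l])$. Each $\m[d_i]$ is right-aligned, hence spherical, and has support $\beta_i$. By the uniqueness of the spherical multisegment with a given support we get $\m[d_i]=\m_{\beta_i}$. Lemma \ref{lemma:m_circ}\eqref{it:mc1} then gives $\zeta(\m[d_i])\cong Z(\m_{\beta_i})$; here $\zeta(\m[d_i])$ is an honest module equal to $M(\m[d_i])$, which is simple. Substituting this identification completes the equality.

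I do not expect any serious obstacle. The only delicate point is making sure that \eqref{eq:word-concat} is applied with the correct splitting of $w(L)$, which is forced by the lengths $|\beta_i|$, and that the simple modules appearing in the sum are indexed over the product $H_{\underline{n_L}}$ in the same way as $\zeta(L)^\otimes$.
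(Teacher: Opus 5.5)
Your proposal is correct and follows the paper's proof essentially step for step. You apply \eqref{eq:word-concat} to the composition $\underline{n_L}$ and use Proposition \ref{prop:descending-spherical} to isolate the single surviving term $Z(\m_{\beta_1})\boxtimes\cdots\boxtimes Z(\m_{\beta_l})$ with coefficient $r_L$. You then identify this term with $\zeta(L)^\otimes$ because each $\m[d_i]$ is right-aligned, hence spherical, exactly as in the paper.
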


\begin{proof}

Let us write $L=Z(\m)$, for a multisegment $\m\in \Z_{\geq0}^{\seg}$.

According to \eqref{eq:word-concat}, we have
\[
[V:w(L)]=  \sum_{L_1, \ldots, L_l} [\rres_{\underline{n_L}} (V): L_1\boxtimes \cdots \boxtimes L_r] [L_1:w_{\beta^{\m}_1}]\cdot\ldots \cdot [L_r:w_{\beta^{\m}_l}]\;.
\]
By Proposition \ref{prop:descending-spherical}, the right-hand side of the last equation equals 
\[
[\rres_{\underline{n_L}} (V): Z(\m_{\beta^{\m}_1})\boxtimes \cdots \boxtimes Z(\m_{\beta^{\m}_l})] r_{\beta^{\m}_1}\cdot\ldots\cdot r_{\beta^{\m}_l} \;.
\]

Recall that $\beta^{\m}_i= \supp(\m[d_i])$, for $i=1,\ldots,l$ with $E(\m)= \{d_1,\ldots,d_l\}$. Since $\m[d_i]$ are right-aligned, hence, spherical multisegments, we have $\m_{\beta^{\m}_i} = \m[d_i]$ and $Z(\m_{\beta^{\m}_i}) = \zeta(\m[d_i])$.

\end{proof}

\subsubsection{Reciprocal characters}

For a given $n\geq0$, let us take note of the \textit{reciprocal character} map
\[
\mathrm{ch}^\otimes: G_1(H_n) \to \Z[\seg]_n
\]
that is given by
\begin{equation}\label{eq:recip-def}
\mathrm{ch}^\otimes(V) = \sum_{\m\in (\Z_{\geq0}^{\seg})_n}[\rres_{\underline{n_{\m}}}(V)\,:\, \zeta(\m)^\otimes] \;\exp(\m)\;.
\end{equation}

\begin{remark}
    
By functor adjunction, since standard modules have unique simple quotients, we now have
\[
\dim \Hom_{H_{\underline{n_L}}}\left( \zeta(L)^\otimes, \rres_{\underline{n_L}}(L') \right) = \dim \Hom_{H_{n}}\left( \zeta(L), L') \right) = \left\{ \begin{array}{ll} 1 & L\cong L' \\ 0 & L\not\cong L' \end{array} \right.\;,
\]
for any $L,L'\in \Irr_1(n)$.

Yet, it is still possible for the indicator module $\zeta(L)^{\otimes}$ to appear as a subquotient of $\rres_{\underline{n_L}}(L')$, for non-isomorphic $L\not\cong L'$.

\end{remark}

Combining algebras of all ranks gives rise to a unified map
\[
\ch^\otimes: \mathbf{R_1}\to \Z[\seg]
\]
of same notation.

In light of Proposition \ref{prop:char-word}, we can alternatively present the reciprocal character in terms of an additive map 
\[
\widetilde{\mathrm{ch}}^\otimes: \mathbf{R_1}\to R
\]
that is given by
\[
\widetilde{\mathrm{ch}}^\otimes(V) = \sum_{L\in \Irr_1(n)} \frac{[V\,:\, \pi(L)]}{r_L} \; \pi(L)\;.
\]
The invariant $\widetilde{\mathrm{ch}}^\otimes(V)$ of a module $V$ is now visibly contains a partial information out of the full character $\mathrm{ch}(V)$.

\subsubsection{Integral decomposition of characters}\label{sect:nonintegral-qch}
The following discussion is included for completeness and can be skipped on a first reading.

Due to Proposition \ref{prop:bzring-decomp}, we may easily extend the reciprocal character map into an additive map
\[
\ch^\otimes: \mathbf{R} \to \Z\left[\widetilde{\seg}\right]\;,
\]
by setting $\alpha_\eta\circ \ch^\otimes\circ \sigma_{\eta^{-1}}^\ast: \mathbf{R}_\eta\to \Z[\seg(\eta)]$, on each tensor factor that is given by $\eta\in \C^\times$.

The observant reader may be wary at this point of the discrepancy with the definition of $\ch^\otimes$ supplied in the introduction section \eqref{eq:rec-ch}. Indeed, for the definitions to match, a notion of a standard representation $\zeta(\m)$, and of an indicator module $\zeta(\m)^{\otimes}$, should first be defined for any multisegment $\m\in \Z_{\geq0}^{\widetilde{\seg}}$.

Given such a monomial $\exp(\m) = \otimes_{i=1}^t\exp(\alpha_{\eta_i}(\m_i)) \in \Z\left[\widetilde{\seg}\right]$, with $0\neq \m_i\in \Z_{\geq0}^{\seg}$, for $\eta_1,\ldots,\eta_t\in \C^\times$, being the non-trivial factors of the decomposition in \eqref{eq:decomp-ring}, the induced module 
\[
\zeta(\m):= \sigma^\ast_{\eta_1}(\zeta(\m_1))\times\ldots \times \sigma^\ast_{\eta_t}(\zeta(\m_t))\in \Rep(H_n)
\]
is well-defined, and satisfies $\widetilde{\underline{\zeta}}(\m)= [\zeta(\m)]\in \mathbf{R}$, with respect to the Zelevinsky isomorphism of Section \ref{sect:zel}.

We may also write $\zeta(\m) = \iind_{\underline{n}}(\zeta^0(\m)^{\otimes})$, with the associated indicator module being defined as
\begin{equation}\label{eq:non-canon-ch}
\zeta^0(\m)^{\otimes} := \sigma_{\eta_1}^\ast(\zeta(\m_1)^\otimes)\boxtimes\cdots\boxtimes\sigma_{\eta_t}^\ast(\zeta(\m_t)^\otimes)\in \Rep(H_{\underline{n}})\;.
\end{equation}

It then follows, that for any $H_n$-module $V$, an equality of multiplicities
\begin{equation}\label{eq:prod-mult}
\left[  \rres_{\underline{n}}(V): \zeta^0(\m)^\otimes \right] = \prod_{i=1}^t [\rres_{\underline{n_{\m_i}}}(V_{\eta_i}):\zeta(\m_i)^{\otimes}]
\end{equation}
holds, where $[V] = \otimes_{\eta\in \C/\langle v^2\rangle} \sigma_{\eta}^\ast[V_{\eta}]\in \mathbf{R}$ is the decomposition of the module according to Proposition \ref{prop:bzring-decomp}.

Thus, our two definitions for the reciprocal character outside the integral case of $\mathbf{R}_1$ are visibly reconciled. Yet, it is worthwhile to note that the definition of an indicator module as in equation \eqref{eq:non-canon-ch} is to a large extent non-canonical, as explicated in the following proposition.

\begin{proposition}\label{prop:non-canon-ch}
Let $\m = \sum_{i=1}^t \alpha_{\eta_i}(\m_i)\in \Z_{\geq0}^{\widetilde{\seg}}$ be a multisegment given in the integral decomposition as above. Let us write 
\[
\zeta(\m_i)^\otimes = \zeta_{i,1}\boxtimes\cdots\boxtimes \zeta_{i,l_{\m_i}}\in \Rep_1(H_{\underline{n_{\m_i}}})
\]
for the integral indicator modules involved.

For any tableau $\rho = (\rho^i_j)\in \tab(\Z)$ that satisfies $I_{\rho} = \{(i,j): 1\leq i\leq t, 1\leq j\leq l_{\m_i}\}$ and $1\leq \rho^i_1<  \ldots < \rho^i_{l_{\m_i}}\leq K$, for all $1\leq i\leq t$, let us write the module
\[
\zeta^\rho(\m)^\otimes := 
\left(\bigtimes_{(i,j)\in I_{\rho}\::\; \rho^i_j=1} \sigma_{\eta_i}^\ast(\zeta_{i,j})\right)\boxtimes\cdots \boxtimes \left(\bigtimes_{(i,j)\in I_{\rho}\::\; \rho^i_j=K} \sigma_{\eta_i}^\ast(\zeta_{i,j})\right) \in \Rep(H_{\underline{n_{\rho}}})\;.
\]
Then, for any fixed choice of such a tableau $\rho =\rho(\m)$, we have $\zeta(\m) = \iind_{\underline{n_{\rho(\m)}}}(\zeta^{\rho(\m)}(\m)^{\otimes})$, and 
\[
\ch^\otimes(V) = \sum_{\m\in \Z_{\geq0}^{\widetilde{\seg}}}[\rres_{\underline{n_{\rho}}}(V)\,:\, \zeta^{\rho(\m)}(\m)^\otimes] \;\exp(\m) \in \mathbf{R}
\]
holds.

\end{proposition}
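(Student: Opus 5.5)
Two claims need proof: the induction identity $\zeta(\m)=\iind_{\underline{n_{\rho}}}(\zeta^{\rho}(\m)^\otimes)$, and the fact that the multiplicity $[\rres_{\underline{n_{\rho}}}(V):\zeta^{\rho}(\m)^\otimes]$ equals the product \eqref{eq:prod-mult}, hence the coefficient of $\exp(\m)$ in $\ch^\otimes(V)$.

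\emph{Induction identity.} This follows from transitivity of parabolic induction together with commutation of factors with unlinked central characters. The factors $\sigma_{\eta_i}^\ast(\zeta_{i,j})$ and $\sigma_{\eta_{i'}}^\ast(\zeta_{i',j'})$ with $i\neq i'$ lie in blocks whose eigenvalue cosets $\eta_i\langle v^2\rangle\neq\eta_{i'}\langle v^2\rangle$ differ. For such modules the induction product commutes up to isomorphism, not only in the Grothendieck group: the standard intertwining operator is an isomorphism in this case. Alternatively, Proposition \ref{prop:bzring-decomp} shows the product is simple when both factors are simple, and a generic-position argument handles the general case.

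Now first induce within each block $\{\rho=k\}$. Then rearrange the factors so that, for every $i$, the factors $\zeta_{i,1},\dots,\zeta_{i,l_{\m_i}}$ appear in increasing $j$. This is possible since $\rho^i_j$ is strictly increasing in $j$, so only factors from different $i$ are ever swapped. The result is $\bigtimes_i\sigma_{\eta_i}^\ast(\zeta(\m_i))=\zeta(\m)$.

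\emph{Multiplicity identity.} Decompose $V$ by its $P_n$-generalized eigenvalue cosets. Since restriction is exact, it suffices by additivity to take $V=V'$ simple, and by Proposition \ref{prop:bzring-decomp} we may write $V\cong\bigtimes_\eta\sigma_\eta^\ast(V_\eta)$ with $V_\eta\in\Rep_\eta$. Apply the Mackey formula of Proposition \ref{prop:mackey}, in its general Bernstein--Zelevinsky form, to $\rres_{\underline{n_\rho}}$ of this product. The subquotients are obtained by restricting each $V_\eta$ to a composition and reassembling the pieces, one per block of $\underline{n_\rho}$, by induction. A simple subquotient $L_1\boxtimes\cdots\boxtimes L_K$ matching $\zeta^\rho(\m)^\otimes$ must have, in its $k$-th block, exactly the central character of $\bigtimes_{\rho^i_j=k}\sigma_{\eta_i}^\ast(\zeta_{i,j})$. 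Cosets cannot mix, so the block-$k$ piece coming from $V_{\eta_i}$ must have the central character of $\zeta_{i,j}$ with $\rho^i_j=k$, and must be empty when no such $j$ exists. Hence the only Mackey terms that contribute are those in which $V_{\eta_i}$ is restricted along exactly the composition $\underline{n_{\m_i}}$, with its pieces placed in blocks $\rho^i_1<\dots<\rho^i_{l_{\m_i}}$. Each such term is a product of modules with distinct cosets, which is simple by Proposition \ref{prop:bzring-decomp}. Counting multiplicities therefore gives the product $\prod_i[\rres_{\underline{n_{\m_i}}}(V_{\eta_i}):\zeta(\m_i)^\otimes]$, as in \eqref{eq:prod-mult}.

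The main obstacle is making the Mackey filtration argument clean: a subquotient of a block-$k$ product with the right central character decomposes uniquely into its coset components. I would handle this through the tensor decomposition of Proposition \ref{prop:bzring-decomp}, applied to each Levi factor $H_{n_k}$.
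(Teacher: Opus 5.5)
Your proposal is correct and follows the paper's route. The paper's proof is a one-line appeal to the compatibility of restriction functors with the coset decomposition of Proposition \ref{prop:bzring-decomp}, giving \eqref{eq:prod-mult} with $\zeta^0(\m)^\otimes$ replaced by $\zeta^{\rho(\m)}(\m)^\otimes$. Your Mackey and central-character argument, using that a product of simple modules from distinct cosets is simple and determines its factors, is an explicit proof of that compatibility, and you also verify the induction identity $\zeta(\m)=\iind_{\underline{n_{\rho}}}(\zeta^{\rho}(\m)^\otimes)$, which the paper leaves implicit.
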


\begin{proof}
Since restriction functors are known to be compatible with the decomposition in Proposition \ref{prop:bzring-decomp}, the analogue of equation \eqref{eq:prod-mult} remains valid when $\zeta^0(\m)^{\otimes}$ is replaced with $\zeta^{\rho(\m)}(\m)^\otimes$.
\end{proof}

In light of this inherent non-canonicity, we opt to leave the notion of indicator modules flexible outside the integral case, that is, dependent on a choice of a tableau $\rho(\m)$ as in Proposition \ref{prop:non-canon-ch}. 

\subsection{Counting reciprocal multiplicities}

We would like to attain a combinatorial description for the reciprocal character of a given standard representation. Concretely, for given multisegments $\m,\n\in (\Z_{\geq0}^{\seg})_n$, we would like to describe the multiplicities
\[
A(\m,\n):= [\rres_{\underline{n_{\n}}}(\zeta(\m))\,:\, \zeta(\n)^\otimes]
\]
in terms of tableau counting.

\subsubsection{Adjusted indicator modules}\label{sect:adjust}
For a multisegment $\m\in (\Z_{\geq0}^{\seg})_n$, let us now assume that $\n \in  (\Z_{\geq0}^{\seg})_n$ is a multisegment that admits the inclusion $E(\n)\subseteq E(\m)$. In particular, $l_{\n}\leq l_{\m}$.

In this setup we write the simple \textit{adjusted} indicator module
\[
\zeta(\n)^{\otimes}_{(\m)}:= \zeta(\n[d_1])\boxtimes \cdots \boxtimes \zeta(\n[d_{l_{\m}}]) \in \Rep(H_{\underline{n_{\n,\m}}})\;,
\]
where $E(\m) = \{d_1<\ldots< d_{l_{\m}}\}$, and $\underline{n_{\n,\m}}\in C(n,l)$ (see Proposition \ref{prop:mackey}) equals the composition $\underline{n_{\n}}$ with possible addition of zero parts.

We still have $\zeta(\n) = \iind_{\underline{n_{\n,\m}}}(\zeta(\n)^{\otimes}_{(\m)})$, in similarity with the (non-adjusted) indicator module.

Furthermore, there is a clear identification of the algebra $H_{\underline{n_{\n,\m}}}$ with $H_{\underline{n_{\n}}}$, under which $\zeta(\n)^{\otimes}_{(\m)}$ corresponds to $\zeta(\n)^{\otimes}$. Yet, making this subtle distinction will assist us with the Mackey theory treatment.

For example, with $\m=[0,0]+[1,1]+[2,2]$ and $\n=[0,0]+[2,2]$, we have $\zeta(\n)^{\otimes}=Z([0,0])\boxtimes Z([1,1])\in \Irr(H_{(1,1)})$, while the adjusted indicator module is given as $\zeta(\n)^{\otimes}_{(\m)}=Z([0,0])\boxtimes \C \boxtimes Z([1,1])\in \Irr(H_{(1,0,1)})$.

\subsubsection{Mackey tableau counting}

\begin{lemma}\label{lem:Em-in-En}
    Whenever $A(\m,\n)$ is non-zero, we have $E(\n)\subseteq E(\m)$.
\end{lemma}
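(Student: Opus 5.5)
The plan is to turn the nonvanishing of $A(\m,\n)$ into the appearance of a specific word in the character of $\zeta(\m)$. Then a combinatorial look at that character will force every end point of $\n$ to be an end point of $\m$.

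\emph{Step 1: reduction to the character of $\zeta(\m)$.} Apply Proposition~\ref{prop:char-word} with $V=\zeta(\m)$ and $L=Z(\n)$. This gives
\[
[\zeta(\m):w(\n)] = r_{\n}\cdot A(\m,\n).
\]
Since $r_{\n}>0$ by Proposition~\ref{prop:descending-spherical}, the assumption $A(\m,\n)\neq 0$ implies that the word $w(\n)$ appears in $\ch(\zeta(\m))$.

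\emph{Step 2: the words in $\ch(\zeta(\m))$.} Write $\m=[x_1,y_1]+\ldots+[x_r,y_r]$. Apply Proposition~\ref{prop:mackey} with $s=n$ and restrict attention to the composition $(1,\ldots,1)$. Each segment $[x_i,y_i]$ then contributes the descending letters $\alpha_{y_i},\alpha_{y_i-1},\ldots,\alpha_{x_i}$, placed in increasing positions $e^i_1<\cdots<e^i_{k_i}$. Consequently, every word with $[\zeta(\m):w]\neq0$ is a shuffle of the descending segment words
\[
\alpha_{y_i}\alpha_{y_i-1}\cdots\alpha_{x_i}, \qquad i=1,\ldots,r.
\]
In any such shuffle, each occurrence of a letter $\alpha_c$ is of one of two kinds. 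Either it is the first letter of some segment word, and then $c=y_i\in E(\m)$. Or the letter $\alpha_{c+1}$ of the same segment word occurs at an earlier position.

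\emph{Step 3: locating the end points of $\n$ in $w(\n)$.} Let $E(\n)=\{d_1<\cdots<d_l\}$, so that
\[
w(\n)=w_{\beta_1}\cdots w_{\beta_l}, \qquad \beta_i=\supp(\n[d_i]).
\]
Because $\n[d_i]$ is right-aligned at $d_i$, the descending word $w_{\beta_i}$ begins with $\alpha_{d_i}$, and all of its letters are $\le d_i$. Look at the occurrence of $\alpha_{d_i}$ at the start of the block $w_{\beta_i}$. Every letter before it lies in $w_{\beta_1}\cdots w_{\beta_{i-1}}$, so it is $\alpha_c$ with $c\le d_{i-1}<d_i$. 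In particular, $\alpha_{d_i+1}$ does not occur before this position; for $i=1$ nothing precedes it at all. By the dichotomy of Step 2, this occurrence of $\alpha_{d_i}$ must be the first letter of some segment word. Hence $d_i=y_j$ for some $j$, so $d_i\in E(\m)$.

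The main point needing care is Step 2: the shuffle description of $\ch(\zeta(\m))$ must be extracted precisely from Proposition~\ref{prop:mackey}. In particular, one must check that the letters contributed by a single segment appear in descending order as the positions increase. This follows from the explicit one-dimensional restriction $Z([a_j,a_{j-1}-1])$ given in the proof of Proposition~\ref{prop:mackey}, together with the action $y_i\mapsto v^{2(b-i+1)}$ on $Z([a,b])$.
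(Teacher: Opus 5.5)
Your proof is correct, but it takes a different route from the paper's.

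\textbf{The paper's route.} The paper never passes to characters. It applies Proposition~\ref{prop:mackey} directly with $s=l_{\n}$ to obtain $\mathcal{Q}\in\mathcal{K}(\underline{\m},l_{\n})$ with $\zeta(\n)^{\otimes}$ appearing in $M(\mathcal{Q})$. Central characters (Remark~\ref{rem:chars-cent}) then give $\supp(\m_{\mathcal{Q},e})=\supp(\n[d_e])$ for each block $e$. Next it picks a segment $[x_i,y_i]$ whose piece in block $e$ covers $d_e$, so that $d_e\le y_i$. The top $y_i$ sits in block $e^i_1\le e$, whose support is bounded by $d_{e^i_1}\le d_e$, and this forces $d_e=y_i$.

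\textbf{How your route compares.} You run the same idea (the top of a segment must land in an earlier block whose letters are all at most the current end point) at the finest resolution. Once $A(\m,\n)\neq0$ is turned into $[\zeta(\m):w(\n)]\neq0$, blocks become single letters, and the support bookkeeping becomes your shuffle dichotomy, which is very transparent.

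The cost is the detour through Proposition~\ref{prop:char-word}. Its proof depends on Proposition~\ref{prop:descending-spherical}, and hence on Lemma~\ref{lemma:m_circ}\eqref{it:mc33} and the symmetry of induction. The paper's version uses only Mackey theory and central characters. It is also phrased in the bitableau language of $\mathcal{K}(\underline{\m},l_{\m})$, so the same inequality $y_i\le d_{e^i_1}$ gives the inclusion $\mathcal{K}(\underline{\m})_+\subseteq\mathcal{K}(\underline{\m})_0$ that is needed right afterwards.

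\textbf{Removing the extra dependence.} You can avoid Proposition~\ref{prop:char-word} entirely. You only need the inequality $[\zeta(\m):w(\n)]\ge A(\m,\n)\prod_i[\zeta(\n[d_i]):w_{\beta_i}]$, which follows from \eqref{eq:word-concat} by keeping a single nonnegative term. Each factor $[\zeta(\n[d_i]):w_{\beta_i}]$ is positive by your own shuffle description. Since all segments of $\n[d_i]$ end at $d_i$, taking the first letters of all their segment words, then all second letters, and so on, produces exactly the descending word $w_{\beta_i}$.
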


\begin{proof}

Let us write $\underline{\m} = [x_1,y_1]\cdots[x_l,y_l]\in \mathcal{W}(\seg)$ for a chosen ordering of $\m$, and $E(\n) = \{d_1<\ldots< d_{l_{\n}}\}$.

When $A(\m,\n)\neq0$, by Proposition \ref{prop:mackey} there must be a bitableau $(a^i_{j}, e^i_{j})_{i,j}=\mathcal{Q}\in \mathcal{K}(\underline{\m}, l_{\n})$ with $\zeta(\n)^{\otimes}$ appearing in $M(\mathcal{Q})\in \Rep(H_{\underline{n_{\n}}})$. In other words, the simple module $\zeta(\n[d_e])$ appears in $M(\m_{\mathcal{Q},e})$, for all $1\leq e\leq l_{\n}$. In particular, we see that $\supp(\m_{\mathcal{Q},e}) = \supp(\n[d_e])$.

Suppose now that $1\leq e\leq l_{\n}$ is fixed. The latter support condition implies that $\supp(\m_{\mathcal{Q},e})(d_e)>0$ and thus the existence of $(i,j)\in I_{\mathcal{Q}}^e$ with $d_e< a^i_{j-1}$ (when $a^i_0=y_i+1$ is taken). Since $a^i_{j-1}\leq a^i_0$ holds, we have $d_e\leq y_i$.

Since $y_i\in \supp(\m_{\mathcal{Q},e^i_1})$, we obtain that $y_i\leq d_{e^i_1}$. Noting that $e= e^i_j\geq e^i_1$, we now see that $d_{e^i_1}\leq d_e$, and $y_i\leq d_e$.

As a consequence, $d_e = y_i\in E(\m)$.

\end{proof}

For a given ordering $\underline{\m}=[x_1,y_1]\cdots[x_{l_{\m}},y_{l_{\m}}]\in \mathcal{W}(\seg)$ of a multisegment $\m\in \Z_{\geq0}^{\seg}$ with $E(\m)= \{d_1<\ldots <d_{l_{\m}}\}$, we define the sets of bitableaux
\[
\mathcal{K}(\underline{\m})_+ = \left\{\mathcal{Q}\in \mathcal{K}(\underline{\m},l_{\m})\::\:  \exists\n\in \Z_{\geq0}^{\seg},\; E(\n)\subseteq E(\m),\, \underline{n}_{\mathcal{Q}}= \underline{n_{\n,\m}} ,\,\left[M(\mathcal{Q})\;:\; \zeta(\n)_{(\m )}^\otimes\right] \neq 0\right\}\;,
\]
\[
\mathcal{K}(\underline{\m})_0 = \left\{ (a^i_j,e^i_j)_{i,j}=\mathcal{Q}\in \mathcal{K}(\underline{\m},l_{\m})\::\:  y_i\leq d_{e^i_1}, \forall1\leq i\leq l_{\m} \right\}\;.
\]

Through the same argument as in the proof of Lemma \ref{lem:Em-in-En}, it is easy to verify that $\mathcal{K}(\underline{\m})_+\subseteq \mathcal{K}(\underline{\m})_0$ holds.

\begin{lemma}\label{lem:Kunique}
For any bitableau $\mathcal{Q}\in \mathcal{K}(\underline{\m})_+$, there is a unique multisegment $\n\in\Z_{\geq0}^{\seg}$, for which the indicator module $\zeta(\n)_{(\langle\underline{\m}\rangle )}^\otimes$ is well-defined and appears in $M(\mathcal{Q})$.

We write $\n=\theta(\mathcal{Q})$, and thus obtain a map $\theta:\mathcal{K}(\underline{\m})_+\to \Z_{\geq0}^{\seg}$.

Moreover, $\left[M(\mathcal{Q})\;:\; \zeta(\theta(\mathcal{Q}))_{(\langle\underline{\m}\rangle )}^\otimes\right] =1$, for all $\mathcal{Q}\in \mathcal{K}(\underline{\m})_+ $.

\end{lemma}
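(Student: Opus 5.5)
Write $E(\m)=\{d_1<\ldots<d_{l}\}$, $l=l_{\m}$, and $\mathcal{Q}\in\mathcal{K}(\underline{\m})_+$. By definition of $M(\mathcal{Q})$, a simple module of the form $\zeta(\n)^\otimes_{(\m)}=\zeta(\n[d_1])\boxtimes\cdots\boxtimes\zeta(\n[d_l])$ appears in $M(\mathcal{Q})=M(\m_{\mathcal{Q},1})\boxtimes\cdots\boxtimes M(\m_{\mathcal{Q},l})$ exactly when, for every $1\leq e\leq l$, the simple module $\zeta(\n[d_e])$ appears in $M(\m_{\mathcal{Q},e})$. Multiplicities in an outer tensor product multiply, so $[M(\mathcal{Q}):\zeta(\n)^\otimes_{(\m)}]=\prod_e[M(\m_{\mathcal{Q},e}):\zeta(\n[d_e])]$. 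Both uniqueness and the multiplicity claim therefore reduce to a statement about a single factor. That statement is: for a multisegment $\mathfrak{p}=\m_{\mathcal{Q},e}$ and an integer $d$, there is at most one right-aligned multisegment $\mathfrak{r}$ with $\mathfrak{r}=\mathfrak{r}[d]$ (or $\mathfrak{r}=0$) such that $Z(\mathfrak{r})$ appears in $M(\mathfrak{p})$, and such an $\mathfrak{r}$ appears with multiplicity one.

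For the single-factor claim I would argue as follows. If $Z(\mathfrak{r})$ appears in $M(\mathfrak{p})$, then the central characters agree by Remark \ref{rem:chars-cent}, so $\supp(\mathfrak{r})=\supp(\mathfrak{p})$. A right-aligned multisegment is spherical, and a spherical multisegment is determined by its support, so $\mathfrak{r}=\mathfrak{m}_{\supp(\mathfrak{p})}=\mathfrak{p}^\circ$. This gives uniqueness of each factor $\n[d_e]$. It also forces $\mathfrak{p}^\circ$ to be right-aligned with endpoint $d_e$, or to be zero.

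Next I reconstruct $\n$ itself. Since $\n=\sum_e\n[d_e]$ and the pieces $\n[d_e]$ live in distinct gradings $\seg^{d_e}$, we get $\n=\sum_e(\m_{\mathcal{Q},e})^\circ$. This is the unique candidate, and existence is guaranteed by $\mathcal{Q}\in\mathcal{K}(\underline{\m})_+$. I also need to check that $\zeta(\n)^\otimes_{(\m)}$ is well defined for this $\n$, i.e.\ that $E(\n)\subseteq E(\m)$. This holds because each nonzero $(\m_{\mathcal{Q},e})^\circ$ has endpoint $d_e\in E(\m)$, and the composition matches $\underline{n}_{\mathcal{Q}}$ because the supports agree.

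The multiplicity claim then follows from Lemma \ref{lemma:m_circ}\eqref{it:mc3}: $[M(\mathfrak{p}):Z(\mathfrak{p}^\circ)]=1$. Moreover $\zeta(\n[d_e])\cong Z(\n[d_e])$ by Lemma \ref{lemma:m_circ}\eqref{it:mc1}, since $\n[d_e]$ is spherical. Hence every factor in the product equals $1$. When $\m_{\mathcal{Q},e}=0$ the factor is the trivial $H_0$-module and contributes $1$ as well.

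The main obstacle is the support/central-character step. Remark \ref{rem:chars-cent} is phrased for products, but $M(\mathfrak{p})$ is a product of segment modules, which lies in $\Rep_{\supp(\mathfrak{p})}$. I would make explicit that the central character of $Z(\mathfrak{r})$ is $\supp(\mathfrak{r})$. This holds because $Z(\mathfrak{r})$ is a quotient of $\zeta(\mathfrak{r})$, which is a product of segment modules $Z(\Delta)$ with central character $\supp(\Delta)$. Beyond that point the argument is formal bookkeeping.
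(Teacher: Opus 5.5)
Your proposal is correct and follows essentially the same route as the paper. The paper argues that equal supports force $\n[d_e]=\m_{\mathcal{Q},e}^\circ$, because right-aligned multisegments are spherical and a spherical multisegment is determined by its support, and it obtains multiplicity one from Lemma \ref{lemma:m_circ}\eqref{it:mc3}; you add only the factorwise multiplicativity and central-character bookkeeping that the paper leaves implicit.
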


\begin{proof}


Suppose that $\mathcal{Q}\in \mathcal{K}(\underline{\m})_+$ and $\n_1,\n_2\in\Z_{\geq0}^{\seg}$ are given with both $\zeta(\n_i)_{(\langle\underline{\m}\rangle )}^\otimes$, $i=1,2$, appearing in $M(\mathcal{Q})$. We have $\supp(\m_{\mathcal{Q},e}) = \supp(\n_i[d_e])$, for all $1\leq e\leq l$ and $i=1,2$.

Since right-aligned multisegments are spherical, we see that $\n_1[d_e] = \m_{\mathcal{Q},e}^\circ = \n_2[d_e]$, whenever those are non-zero. Uniqueness follows.

The multiplicity-one phenomenon follows from Lemma \ref{lemma:m_circ}\eqref{it:mc3}.

\end{proof}

\begin{corollary}\label{cor:Kcounting}
For any multisegments $\m,\n\in \Z_{\geq0}^{\seg}$ and an ordering $\underline{\m}$ of $\m$, we have
\[
A(\m,\n) = \#\left\{\mathcal{Q}\in \mathcal{K}(\underline{\m})_+ \;:\; \theta(\mathcal{Q}) = \n \right\} \;.
\]
\end{corollary}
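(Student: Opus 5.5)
We use the Mackey formula of Proposition \ref{prop:mackey}, with $s=l_{\m}$, and sort its terms by their composition. Fix an ordering $\underline{\m}$ of $\m$ and write $E(\m)=\{d_1<\ldots<d_{l_{\m}}\}$.

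If $E(\n)\not\subseteq E(\m)$, then $A(\m,\n)=0$ by Lemma \ref{lem:Em-in-En}. It remains to show that no $\mathcal{Q}\in\mathcal{K}(\underline{\m})_+$ has $\theta(\mathcal{Q})=\n$. By definition, $\theta(\mathcal{Q})$ is a multisegment $\n'$ with $E(\n')\subseteq E(\m)$, so $\theta(\mathcal{Q})\neq\n$ and both sides vanish.

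So assume $E(\n)\subseteq E(\m)$. Under the identification $H_{\underline{n_{\n,\m}}}\cong H_{\underline{n_{\n}}}$ of Section \ref{sect:adjust}, which only inserts trivial tensor factors $H_0=\C$, we get
\[
A(\m,\n)=[\rres_{\underline{n_{\n,\m}}}(\zeta(\m)):\zeta(\n)^\otimes_{(\m)}].
\]
Since $[\zeta(\m)]=M(\m)$, Proposition \ref{prop:mackey} with $s=l_{\m}$ expresses $[\rres_{\underline{n_{\n,\m}}}(\zeta(\m))]$ as the sum of $M(\mathcal{Q})$ over those $\mathcal{Q}\in\mathcal{K}(\underline{\m},l_{\m})$ with $\underline{n}(\mathcal{Q})=\underline{n_{\n,\m}}$. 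Taking the multiplicity of $\zeta(\n)^\otimes_{(\m)}$ then gives
\[
A(\m,\n)=\sum_{\mathcal{Q}\,:\,\underline{n}(\mathcal{Q})=\underline{n_{\n,\m}}}[M(\mathcal{Q}):\zeta(\n)^\otimes_{(\m)}].
\]

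We next identify the nonzero terms of this sum. A term is nonzero exactly when $\mathcal{Q}\in\mathcal{K}(\underline{\m})_+$, witnessed by this particular $\n$. For such $\mathcal{Q}$, the uniqueness part of Lemma \ref{lem:Kunique} forces $\theta(\mathcal{Q})=\n$, and its multiplicity-one part shows the term equals $1$.

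Conversely, suppose $\theta(\mathcal{Q})=\n$. Then $\zeta(\n)^\otimes_{(\m)}$ appears in $M(\mathcal{Q})$, so the module $M(\mathcal{Q})$ lives over $H_{\underline{n_{\n,\m}}}$. Hence $\underline{n}(\mathcal{Q})=\underline{n_{\n,\m}}$ and $\mathcal{Q}$ contributes to the sum. It follows that the sum counts exactly the set $\{\mathcal{Q}\in\mathcal{K}(\underline{\m})_+:\theta(\mathcal{Q})=\n\}$.

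The one step needing care is matching the compositions. One must check that $\underline{n}(\mathcal{Q})$ has exactly $l_{\m}$ parts, possibly including zeros, so that it can be compared with $\underline{n_{\n,\m}}$. One must also check that zero parts of $\underline{n_{\n,\m}}$ correspond to empty $\m_{\mathcal{Q},e}$. This holds because $M(0)=\C$ is the trivial $H_0$-module, which matches the factor $\zeta(\n[d_e])=\C$ when $\n[d_e]=0$.
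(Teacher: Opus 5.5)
Your proof is correct and follows the same route as the paper, whose proof simply cites Proposition \ref{prop:mackey} together with Lemmas \ref{lem:Em-in-En} and \ref{lem:Kunique}. You have made explicit the bookkeeping the paper leaves implicit: passing to the adjusted indicator module $\zeta(\n)^\otimes_{(\m)}$, extracting the $\underline{n_{\n,\m}}$-component of the Mackey sum with $s=l_{\m}$, and invoking the uniqueness and multiplicity-one parts of the definition of $\theta$.
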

\begin{proof}
Proposition \ref{prop:mackey} with Lemmas \ref{lem:Em-in-En} and \ref{lem:Kunique}.
\end{proof}

\subsubsection{Connection criterion}

We would like to state an explicit criterion for the identification of subset $\mathcal{K}(\underline{\m})_+$ within the full set $\mathcal{K}(\underline{\m}, l_{\langle \underline{\m}\rangle})$ of tableaux.

For a bitableau $(a^i_j,e^i_j)_{i,j}=\mathcal{Q}\in\mathcal{K}(\underline{\m},l_{\m})$, we make the standard convention of $a^i_0 = y_i+1$.

We write the set of indices 
\[
\widetilde{I}_{\mathcal{Q}} = \left\{(i,1)\in I_{\mathcal{Q}}\::\: y_i\neq d_{e^i_1}\right\}\;,
\]
and the shifted function $\widehat{\mathcal{Q}}: I_{\mathcal{Q}}\to \mathbb{Z}\times \mathbb{Z}$ that is given by $\widehat{\mathcal{Q}}(i,j) = (a^i_{j-1},e^i_j)$.

We say that an injective function $\tau: \widehat{I}_{\mathcal{Q}}\sqcup \widetilde{I}_{\mathcal{Q}}\to I_{\mathcal{Q}}$ is a \textit{Mackey connection} for $\mathcal{Q}$, when it satisfies $\mathcal{Q}\circ \tau = \widehat{\mathcal{Q}}$.

\begin{proposition}\label{prop:crit-mackey-conn}
Given an ordering $\underline{\m}$ of a multisegment $\m$ as above and a bitableau $\mathcal{Q}= (a^i_j, e^i_j)_{i,j}\in\mathcal{K}(\underline{\m},l_{\m})$, we have $\mathcal{Q}\in \mathcal{K}(\underline{\m})_+$, if and only if, a Mackey connection for $\mathcal{Q}$ exists.

When $\tau$ is a Mackey connection for $\mathcal{Q}$, we have 
   \[
    \theta(\mathcal{Q}) = \sum_{(i,j)\in I_{\mathcal{Q}} \setminus \tau \left(\widehat{I}_{\mathcal{Q}}\sqcup \widetilde{I}_{\mathcal{Q}}\right)} [a^i_j, d_{e^i_j}]\in \Z_{\geq0}^{\seg}\;.
    \]
\end{proposition}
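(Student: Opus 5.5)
The plan is to reduce membership in $\mathcal{K}(\underline{\m})_+$ to a block-by-block right-alignment condition, and then read that condition off Proposition \ref{prop:crit-right}.

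\textbf{Reduction to right-alignment.} Fix $\mathcal{Q}\in\mathcal{K}(\underline{\m},l_{\m})$. Then $M(\mathcal{Q})=M(\m_{\mathcal{Q},1})\boxtimes\cdots\boxtimes M(\m_{\mathcal{Q},l_{\m}})$. An adjusted indicator module $\zeta(\n)^{\otimes}_{(\m)}$ appears in $M(\mathcal{Q})$ if and only if, for every $e$, the simple module $\zeta(\n[d_e])=Z(\n[d_e])$ appears in $M(\m_{\mathcal{Q},e})$. In that case $\supp(\n[d_e])=\supp(\m_{\mathcal{Q},e})$. Since $\n[d_e]$ is right-aligned, hence spherical, it must equal $\m_{\mathcal{Q},e}^\circ$. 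Conversely, $Z(\m^\circ_{\mathcal{Q},e})$ always appears in $M(\m_{\mathcal{Q},e})$ by Lemma \ref{lemma:m_circ}\eqref{it:mc3}. Hence
\[
\mathcal{Q}\in\mathcal{K}(\underline{\m})_+ \iff \text{for every } e,\ \m_{\mathcal{Q},e}=0 \text{ or } \m^\circ_{\mathcal{Q},e}=\m^\circ_{\mathcal{Q},e}[d_e].
\]
By Proposition \ref{prop:crit-right}, together with its proof, which shows that the only possible $b$ is $\max E(\cdot)$, the latter condition is equivalent to
\[
\delta(\m_{\mathcal{Q},e})-\epsilon'_{d_e}(\m_{\mathcal{Q},e})\in\Z_{\geq0}^{\mathcal{I}} .
\]

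\textbf{Translation into a matching problem.} The segments of $\m_{\mathcal{Q},e}$ are $[a^i_j,a^i_{j-1}-1]$ for $(i,j)\in I^e_{\mathcal{Q}}$. Their begin points are $a^i_j=\mathcal{Q}(i,j)_1$, and their shifted end points are $a^i_{j-1}=\widehat{\mathcal{Q}}(i,j)_1$. Both carry the same second coordinate $e$, so they stay in block $e$. Thus the positivity of $\delta-\epsilon'_{d_e}$ in each block says that the multiset of values $\widehat{\mathcal{Q}}(i,j)$, over entries whose end point differs from $d_e$, is dominated by the multiset of values $\mathcal{Q}(i,j)$. For multisets this is the same as the existence of an injection $\tau$ with $\mathcal{Q}\circ\tau=\widehat{\mathcal{Q}}$ on that index set.

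It remains to check that this index set coincides with $\widehat{I}_{\mathcal{Q}}\sqcup\widetilde{I}_{\mathcal{Q}}$.
\begin{itemize}
\item For $j=1$ the end point is $y_i$, and excluding $y_i=d_{e^i_1}$ gives exactly $\widetilde{I}_{\mathcal{Q}}$.
\item For $j\geq2$ I must rule out end points equal to $d_e$. In the direction ($\Rightarrow$), I use $\mathcal{K}(\underline{\m})_+\subseteq\mathcal{K}(\underline{\m})_0$. If $a^i_{j-1}-1=d_{e^i_j}$ with $j\geq2$, then $d_{e^i_j}<a^i_{j-1}\leq y_i\leq d_{e^i_1}\leq d_{e^i_j}$, which is a contradiction.
\item In the direction ($\Leftarrow$), the connection's domain contains all indices counted by $\epsilon'_{d_e}$, and possibly more. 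So its existence still yields $\delta\geq\epsilon'_{d_e}$ blockwise, which is all that is needed.
\end{itemize}

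\textbf{Formula for $\theta$.} By Lemma \ref{lem:Kunique}, $\theta(\mathcal{Q})[d_e]=\m^\circ_{\mathcal{Q},e}$. Proposition \ref{prop:crit-right} gives
\[
\m^\circ_{\mathcal{Q},e}=\sum_a\bigl(\delta(\m_{\mathcal{Q},e})-\epsilon'_{d_e}(\m_{\mathcal{Q},e})\bigr)(a)\,[a,d_e].
\]
The coefficient at $a$ counts the begin points in block $e$ equal to $a$ that are not hit by $\tau$. This holds once we know the domain of $\tau$ equals the $\epsilon'$-index set, which was established above for $\mathcal{Q}\in\mathcal{K}(\underline{\m})_+$. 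Summing over $e$ gives the stated expression.

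The main obstacle is this bookkeeping between the domain of $\tau$ and the end-point exclusion in $\epsilon'_{d_e}$, and in particular the use of the $\mathcal{K}(\underline{\m})_0$ inequality $y_i\leq d_{e^i_1}$ to exclude $j\geq2$ entries ending at $d_e$.
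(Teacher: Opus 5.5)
Your proof is correct and follows essentially the same route as the paper. Both arguments reduce membership in $\mathcal{K}(\underline{\m})_+$ to blockwise right-alignment of $\m^\circ_{\mathcal{Q},e}$ at $d_e$ via Lemma~\ref{lemma:m_circ}\eqref{it:mc3} and uniqueness of spherical multisegments, then apply Proposition~\ref{prop:crit-right} and turn the blockwise positivity of $\delta-\epsilon'_{d_e}$ into an injective matching. Your version is more careful at one point. You use the containment argument in the converse direction, and $\mathcal{K}(\underline{\m})_+\subseteq\mathcal{K}(\underline{\m})_0$ for the $\theta$-formula, to justify identifying the $\epsilon'_{d_e}$-index set with $\widehat{I}_{\mathcal{Q}}\sqcup\widetilde{I}_{\mathcal{Q}}$; the paper takes this identification for granted when it writes $\beta(e,d_e)$ directly as a sum over that domain.
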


\begin{proof}
Let us write the set of indices $S = \{1\leq e\leq l_{\m}\::\; \m_{\mathcal{Q},e}\neq0\}$, and for each $e\in S$, $m_e =\max\{i\;:\; \supp(\m_{\mathcal{Q},e})(i)\neq0\}$.

By Lemma \ref{lemma:m_circ}\eqref{it:mc3}, $Z\left(\m^{\circ}_{\mathcal{Q},1}\right)\boxtimes \cdots\boxtimes Z\left(\m^{\circ}_{\mathcal{Q},s}\right)$ appears in $M(\mathcal{Q})$. Thus, by uniqueness of spherical multisegments of a given support, $\mathcal{Q}\in\mathcal{K}(\underline{\m})_+$ holds, if and only if, all $\{\m_{\mathcal{Q},e}^{\circ}\}_{e\in S}$ are right-aligned multisegments and the sequence $\{m_e\}_{e\in S}$ is ascending.

By Proposition \ref{prop:crit-right}, for each $e\in S$, $\m_{\mathcal{Q},e}^{\circ}$ is right-aligned, if and only if, $b_e\in \Z$ exists, for which the invariant
\[
\beta(e,b):=\sum_{(i,j)\in I^e_{\mathcal{Q}}} \alpha_{a^i_j} - \sum_{(i,j)\in I^e_{\mathcal{Q}} \;:\; a^i_{j-1}-1\neq b } \alpha_{a^i_{j-1}} \in \Z^{\mathcal{I}}
\]
satisfies the positivity condition $\beta(e,b_e) \in \Z_{\geq0}^{\mathcal{I}}$. Moreover, when the condition holds, we have $\m_{\mathcal{Q},e}^{\circ}=\m_{\mathcal{Q},e}^{\circ}[b_e]$ and consequently $b_e=m_e$.

Suppose now that a Mackey connection $\tau: \widehat{I}_{\mathcal{Q}}\sqcup \widetilde{I}_{\mathcal{Q}}\to I_{\mathcal{Q}}$ exists.

Then, clearly for each $e\in S$, $\tau$ preserves $I^e_{\mathcal{Q}}$ and we have
\[
\beta(e, d_e) = \sum_{(i,j)\in I^e_{\mathcal{Q}}   } \alpha_{a^i_j} - \sum_{(i,j)\in I^e_{\mathcal{Q}}\cap(\widehat{I}_{\mathcal{Q}}\sqcup \widetilde{I}_{\mathcal{Q}} )} \alpha_{a^i_{j-1}}=\sum_{(i,j)\in I^e_{\mathcal{Q}}  } \alpha_{a^i_j} -  \sum_{(i,j)\in I^e_{\mathcal{Q}} \setminus \mathrm{Im}(\tau)  } \alpha_{a^i_j}  \in \Z_{\geq0}^{\mathcal{I}}\;.
\]
Hence, $\m_{\mathcal{Q},e}^{\circ}$ is right-aligned and $m_e= d_e$, for all $e\in S$. Since $\{d_e\}_{e\in S}$ is ascending, we obtain $\mathcal{Q}\in\mathcal{K}(\underline{\m})_+$.

Moreover, since $\theta(\mathcal{Q}) = \sum_{e=1}^{l_{\m}} \m_{\mathcal{Q},e}^{\circ}$, Proposition \ref{prop:crit-right} implies
\[
\theta(\mathcal{Q}) = \sum_{e\in S} \sum_{a\in \Z} \beta(e,d_e)(a)\cdot[a,d_e] =  \sum_{e\in S} \sum_{(i,j)\in I^e_{\mathcal{Q}} \setminus \mathrm{Im}(\tau)} [a^i_j,d_e]\;,
\]
which is equivalent to the stated formula.

Conversely, let us assume that $\zeta(\n)_{(\m)}^{\otimes}$ appears in $M(\mathcal{Q})$, for a given multisegment $\n\in \Z_{\geq0}^{\seg}$. 

Then, $\m_{\mathcal{Q},e}^\circ =\n[d_e]$ and $d_e = m_e$, for all $e\in S$. Once again, it follows from Proposition \ref{prop:crit-right} that $\beta(e,d_e)\in \Z_{\geq0}^{\mathcal{I}}$ holds, for all $e\in S$.

We may rewrite
\[
\beta(e,d_e) = \sum_{a\in \Z} \left( \# B(a,e) - \# \overleftarrow{B}(a,e) \right)\alpha_a\;,
\]
where
\[
    B(a,e) = \left\{(i,j)\in I^e_{\mathcal{Q}} : a^i_j=a \right\}, \quad 
    \overleftarrow{B}(a,e) =\left\{(i,j)\in I^e_{\mathcal{Q}}\cap(\widehat{I}_{\mathcal{Q}}\sqcup \widetilde{I}_{\mathcal{Q}} ) : a^i_{j-1}=a\right\}.
\]
Thus, the positivity condition amount to inequalities $\#\overleftarrow{B}(a,e)\leq \#B(a,e)$, for all $a\in \Z$ and $e\in S$. In particular, injective functions $\tau_{a,e}:\overleftarrow{B}(a,e) \hookrightarrow B(a,e)$ may be chosen, and then glued together into an injective function $\tau = \sqcup_{a,e}\tau_{a,e}$, which evidently serves as a Mackey connection.

\end{proof}

\section{Monomial bases in quantum groups}\label{sect:qgroups}

\subsection{Algebraic quantization}

We write $\mathbb{A} = \Z[q,q^{-1}]< \mathbb{Q}(q)$ for the formal ring of Laurent polynomials within the field of rational functions.

We write $\mathrm{ev}_1=\Z$ to be the $\mathbb{A}$-module given by function evaluation at $q=1$. Namely, a polynomial $p\in \mathbb{A}$ acts on $t\in \mathrm{ev}_1$ by $p.t=p(1)t$.

For a $\mathbb{A}$-module $V$, we write $V_{q\to 1}:= \mathrm{ev}_1\otimes_{\mathbb{A}}V$ for the additive group produced by tensoring modules. 

For a $\mathbb{A}$-module $V$, we also write $V^\ast:=\Hom_{\mathbb{A}}(V,\mathbb{A})$ for its dual module. 

Similarly, for a $\mathbb{A}$-morphism $\varphi:V\to W$, we write $\varphi_{q\to 1}:V_{q\to1}\to W_{q\to1}$ for the resulting additive map, and $\varphi^\ast:W^\ast\to V^\ast$ for the dual $\mathbb{A}$-morphism.

\subsubsection{Word $\mathbb{A}$-algebra}\label{sect:qgroups-word}
Writing $[k]:=\frac{q^k - q^{-k}}{q-q^{-1}}\in \mathbb{A}$, and $[k]!: = [k]\cdot[k-1]\cdot\ldots \cdot [1]\in \mathbb{A}$, for integers $k\geq1$, we define the \textit{divided powers} $\alpha^{(k)}:= \frac{1}{[k]!}\alpha^k\in R(q)$, for each letter $\alpha\in \mathcal{I}$.

Given a word $e\neq w\in \mathcal{W}(\mathcal{I})$ in its unique decomposition as $w = \alpha_{i_1}^{m_1}\cdot\ldots\cdot \alpha_{i_r}^{m_r}$, with $i_s\neq i_{s+1}$ and $m_s>0$, for each $1\leq s< r$, we define the element 
\[
w':= \alpha_{i_1}^{(m_1)}\cdot\ldots\cdot \alpha_{i_r}^{(m_r)}\in R(q)\;.
\]
We have $w' = \kappa_w^{-1}w$, for $\kappa_w = [m_1]!\cdots[m_r]!\in \mathbb{A}$. The convention $e'=e\in R_0(q)$ is set.

Clearly, $\mathscr{W}:= \{w'\}_{w\in \mathcal{W}(\mathcal{I})}$ is a $\mathbb{Q}(q)$-basis for $R(q)$, with $\mathscr{W}\{k\}:= \{w'\}_{w\in \mathcal{W}(\mathcal{I}_{\leq k})}$ spanning $R\{k\}(q)$, for each $k\geq0$.

We denote 
\[
R[q]= \bigoplus_{n=0}^\infty R_n[q] < R(q)
\]
to be the free $\mathbb{A}$-module generated by $\mathscr{W}$. Here, $R_n[q]= \mathrm{span}_{\mathbb{A}}\{w'\}_{w\in \mathcal{W}_n}$.

Analogously, we denote the submodules $R\{k\}[q] = \oplus_n R\{k\}[q]< R[q]$, with $R\{k\}[q] = \mathrm{span}_{\mathbb{A}}\{w'\}_{w\in \mathcal{W}^k_n}$.

\subsection{Quantum groups}\label{sect:qgroups}
Evidently, $R[q]$ is a sub-$\mathbb{A}$-algebra of $R(q)$, and it is a direct limit of the sequence of finitely generated $\mathbb{A}$-algebras $R\{k\}[q]$, as $k\to \infty$.

For $k\geq0$, Lusztig's $\mathbb{A}$-algebra $\mathbf{f}_{\mathbb{A}}\{k\}$ is defined (\cite[Section 1.2]{lusztig-quantum-book}, \cite[Section 4.1]{ramklesh}) as the quotient of $R\{k\}[q]$ by the ideal $S^k$ that is $\mathbb{A}$-generated by the \textit{quantum Serre relations}
\begin{equation}\label{eq:quantum-serre}
\begin{array}{ll}(\alpha_i \alpha_i\alpha_{j})'- (\alpha_i\alpha_{j}\alpha_i)' + (\alpha_j\alpha_{i}\alpha_i)' & ,\,|i-j|=1 \\
(\alpha_i\alpha_{j})'-(\alpha_j\alpha_i)' & ,\,|i-j|>1\;
\end{array}
,\quad|i|,|j|\leq k\;.
\end{equation}
Since those relations are homogeneous, the ideal $S^k= \bigoplus_{n=0}^\infty S^k_n$ is positively graded and the quotient $\mathbf{f}_{\mathbb{A}}\{k\}= \bigoplus_{n=0}^\infty \mathbf{f}_n\{k\}$ remains a graded algebra, with $\mathbf{f}_n\{k\}:= R_n\{k\}[q]/S^k_n$ being a quotient $\mathbb{A}$-module.

It is known that the complexified algebra $\mathbb{C}\otimes(\mathbf{f}_{\mathbb{A}}\{k\})_{q\to 1}$ is isomorphic with the positive part  $U(\mathfrak{sl}_{2k+2})^+$ of the corresponding type $A$ universal enveloping algebra.

\subsubsection{$A_\infty$ root system}
In better proximity to our setting, we take $\mathbf{f}_{\mathbb{A}}$ to be the quotient algebra of $R[q]$ by the $\mathbb{A}$-ideal $S= \bigoplus_{n=0}^\infty S_n$ that is generated by all quantum Serre relations, that is, \eqref{eq:quantum-serre} when taken without restriction by $k$.

We obtain that $\mathbf{f}_{\mathbb{A}} = \varinjlim_k \mathbf{f}_{\mathbb{A}}\{k\}$ is a direct limit algebra that produces $\mathbb{C}\otimes(\mathbf{f}_{\mathbb{A}})_{q\to 1}\cong  \varinjlim _{k} U(\mathfrak{sl}_{2k+2})^+$, a complex algebra often denoted as $U(\mathfrak{sl}_\infty)^+$.

Again, 
\[
\mathbf{f}_{\mathbb{A}}= \bigoplus_{n=0}^\infty \mathbf{f}_n\;,
\]
holds, where $\mathbf{f}_n = R[q]/S_n = \varinjlim_k \mathbf{f}_n\{k\}$.

For an element $f\in R_n\{k\}[q]$, we set $\overline{f}\in \mathbf{f}_n\{k\}<\mathbf{f}_n<\mathbf{f}_{\mathbb{A}}$ to denote its projection.

\subsubsection{Monomial basis}

There are several known constructions of bases for Lusztig's algebras, which easily extend to the $\mathfrak{sl}_\infty$ setting.

One such construction identifies suitable subsets of words in $\mathcal{W}(\mathcal{I})$, whose scalar multiples, as elements of $R[q]$, remain linearly independent when projected into $\mathbf{f}_{\mathbb{A}}$. Since such scalar multiples may be viewed as monomials in the free non-commutative $\mathbb{Q}(q)$-algebra $R(q)$, such bases are known as \textit{monomial bases}.

The following is derived out of one instance of monomial bases.

\begin{proposition}
The indicator words provide a basis 
\[
\mathscr{M}=\left\{m_{\m}:=\overline{w(\m)'}\in \mathbf{f}_{\mathbb{A}}\;:\;\m\in \Z_{\geq0}^{\seg}\right\}
\]
to $\mathbf{f}_{\mathbb{A}}$, as a free $\mathbb{A}$-module, that is labelled by the monoid of multisegments.
\end{proposition}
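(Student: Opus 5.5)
We need to show that $\{\overline{w(\m)'}\}_{\m}$ is an $\mathbb{A}$-basis of $\mathbf{f}_{\mathbb{A}}$. The plan is to identify this family, up to relabelling, with a known monomial basis attached to a reduced decomposition of the longest element (Lusztig \cite[Section 7.8]{lusztig-canon}, Reineke \cite{reineke-fei}, Berenstein--Zelevinsky \cite[Corollary 3.6(b)]{MR1237826}), and then to double-check the basis property directly by a triangularity argument against the PBW basis. Everything can be done inside a finite rank piece $\mathbf{f}_n\{k\}$, since the family is homogeneous and compatible with the direct limit.

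\textbf{Step 1: grouping the indicator word.} Write $E(\m)=\{d_1<\dots<d_l\}$. The indicator word $w(\m)=w_{\beta_1}\cdots w_{\beta_l}$ is a concatenation of descending words. Each $w_{\beta_i}$, being descending, collapses to $\alpha_{d_i}^{(c_{d_i})}\alpha_{d_i-1}^{(c_{d_i-1})}\cdots$, where $c_j=\supp(\m[d_i])(j)$. Hence $m_{\m}$ is a product of divided powers
\[
\prod_{i=1}^{l}\ \prod_{j\le d_i}^{\longrightarrow}\alpha_j^{(c^{(i)}_j)},
\]
ordered by blocks of decreasing letters, with ascending endpoints across blocks. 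Here $c^{(i)}_{d_i}\ge c^{(i)}_{d_i-1}\ge\cdots$ encodes the right-aligned multisegment $\m[d_i]$. This is exactly the shape of the Lusztig/Reineke monomials for the reduced word of $w_0$ in $S_{2k+2}$ adapted to the quiver orientation in which segments are sorted by endpoint. I would match the exponents with the Berenstein--Zelevinsky string-parametrization combinatorics. There are two checks: the labelling by multisegments is bijective (which follows from the remark that $(\beta_1,\dots,\beta_l)$ determines $\m$), and the graded ranks agree with the number of multisegments of each support.

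\textbf{Step 2: spanning and independence via triangularity.} As a self-contained alternative, expand $m_{\m}$ in the PBW basis $\mathscr{E}$ (or its $q=1$ shadow). Using the Serre relations, a product $\alpha_{d}^{(c_d)}\cdots\alpha_{a}^{(c_a)}$ with nonincreasing exponents equals the PBW monomial of $\m[d]$ plus terms with strictly smaller multisegments in the degeneration order. Concatenating blocks then gives
\[
m_{\m}=e_{\m}+\sum_{\n<\m}s_{\m,\n}e_{\n}.
\]
Here $<$ is a partial order refined by a lexicographic order on the tuple $(\beta_1,\dots)$, and the leading coefficient is $1$. This is consistent with the worked example, where $s_{\m'',\m''}=1$. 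Unitriangularity over $\mathbb{A}$ in each finite-rank weight space yields the basis property.

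\textbf{Main obstacle.} The hard part is establishing the leading term. One has to show that the product of the block monomials has coefficient exactly $1$ on $e_{\m}$ and that all other terms are lower. I would attempt this after applying $\Phi$ at $q=1$, where it reduces to a statement about Hecke modules. By Proposition~\ref{prop:char-word} and adjunction, $[\zeta(\m):w(\m)]=r_{\m}[\rres(\zeta(\m)):\zeta(\m)^{\otimes}]$, and this is nonzero. For the triangularity, Lemma~\ref{lem:Em-in-En} and the tableau criterion of Proposition~\ref{prop:crit-mackey-conn} should show that $\zeta(\m)^{\otimes}$ appears in $\rres\zeta(\n)$ only when $\n\ge\m$, with multiplicity one when $\n=\m$ (Lemma~\ref{lem:Kunique}).

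Passing from this $q=1$ unitriangularity to the $\mathbb{A}$-level requires one more input: the ranks of $\mathbf{f}_n\{k\}$ are finite, and $\mathbf{f}_n\{k\}$ is free with $(\mathbf{f}_n\{k\})_{q\to1}$ equal to the corresponding weight space of $U(\mathfrak{sl})^+$. A family that is unitriangular modulo $(q-1)$ in a free module of the correct rank is a basis over the local ring. Over all of $\mathbb{A}$ I would instead rely on the Lusztig identification from Step 1.
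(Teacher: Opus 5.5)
Your Step 1 is, in substance, the paper's proof. The paper cites Reineke (Theorem 4.2 together with the remark after his Lemma 4.5), which gives exactly the words $w(\m)$, $\m\in\Z_{\geq0}^{\seg\{k\}}$, as an $\mathbb{A}$-basis of $\mathbf{f}_{\mathbb{A}}\{k\}$, and then passes to the direct limit in $k$. Note that your two checks (bijective labelling and matching graded ranks) only reduce the problem to spanning, or to independence plus a unit determinant. All of the content lies in the precise identification with Reineke's monomials, not in a match of ``shape''.

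Your Step 2 should not be presented as an independent verification, because it has gaps beyond the one you flag.
\begin{enumerate}
\item[(i)] Unitriangularity modulo $(q-1)$ only makes the transition determinant equal to $\pm1$ at $q=1$. That does not make it a unit of $\mathbb{A}$. The missing ingredient is positivity. Products of divided powers expand in $\mathscr{B}$ with coefficients $h_{\m,b}\in\Z_{\geq0}[q,q^{-1}]$ (Lusztig's geometric positivity). Hence $h_{\m,b}(1)=0$ forces $h_{\m,b}=0$, and $h_{\m,b_{\m}}(1)=1$ forces $h_{\m,b_{\m}}$ to be a power of $q$ (in fact $1$, by bar-invariance). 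So you should work with $\mathscr{B}$ rather than $\mathscr{E}$. Combine this with $h_{\m,Z(\n)}(1)=[\rres_{\underline{n_{\m}}}(Z(\n)):\zeta(\m)^\otimes]\le A(\n,\m)$; the coefficient identity in the proof of Theorem \ref{thm:rec-ch} does not use the basis property, so this is not circular. That would give a complete argument, once (ii) is settled.
\item[(ii)] The $q=1$ triangularity is not delivered by the results you cite. Lemma \ref{lem:Em-in-En} only yields $E(\m)\subseteq E(\n)$, which is far from a partial order on multisegments. Lemma \ref{lem:Kunique} gives multiplicity one inside each single $M(\mathcal{Q})$. By Corollary \ref{cor:Kcounting}, however, $A(\m,\m)=1$ requires that exactly one $\mathcal{Q}\in\mathcal{K}(\underline{\m})_+$ has $\theta(\mathcal{Q})=\m$. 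This, and the vanishing of $A(\n,\m)$ outside a fixed order, need a genuine combinatorial argument on the Mackey bitableaux. You also need to fix the order first: your Step 2 (off-diagonal terms with $\n<\m$) and your obstacle paragraph (nonvanishing only when $\n\ge\m$) state the triangularity in opposite directions.
\item[(iii)] The $\mathbb{A}$-level claim needs a PBW straightening argument of Levendorskii--Soibelman type, which you do not supply. The claim is that each block equals $e_{\m[d]}$ plus lower terms ``using the Serre relations'', and that concatenating blocks preserves this.
\end{enumerate}
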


\begin{proof}
For $k\geq0$, \cite[Theorem 4.2 and remark after Lemma 4.5]{reineke-fei} gives 
\[
\mathscr{M}\cap \mathbf{f}_{\mathbb{A}}\{k\} = \left\{\overline{w(\m)'}\in \mathbf{f}_{\mathbb{A}}\;:\;\m\in \Z_{\geq0}^{\seg\{k\}}\right\}
\]
as a basis for $\mathbf{f}_{\mathbb{A}}\{k\}$. The claim now follows by taking a direct limit.
\end{proof}

We also denote the set $\mathscr{M}_n:= \mathscr{M}\cap \mathbf{f}_n = \{m_{\m}\}_{\m\in (\Z_{\geq0}^{\seg})_n}$, which becomes a $\mathbb{A}$-basis for $\mathbf{f}_n$.

\begin{remark}
The construction of subsets of words such as in the preceding proposition would typically depend on a choice of a decomposition of a longest Weyl element for the corresponding root system (or, of an orientation of the corresponding Dynkin diagram). Our discussion fixes one such canonical choice for type $A$ root systems.
\end{remark}

\subsection{Quantization of characters}

We would like to exhibit the $q$-variants of the character maps $\ch: G_1(H_n)\to R_n$ and $\ch^\otimes:G_1(H_n)\to \Z[\seg]_n$ from previous sections.

\subsubsection{Specialization maps}

While taking note of the basis $\{(w')^\ast\}_{w\in \mathcal{W}_n}$ for $R_n[q]^\ast$, that is dual to $\mathscr{W}\cap R_n[q]$, let us fix a natural group isomorphism
\[
\mathrm{sp}_{\mathscr{W}}: (R_n[q]^\ast)_{q\to 1}\overset{\sim}{\rightarrow} R_n\;,
\]
by setting $\mathrm{sp}_{\mathscr{W}}(1\otimes (w')^\ast) = \kappa_w(1)\cdot w$, for each word $w\in \mathcal{W}_n$. 

Put differently, for a morphism $\varphi:R[q]\to \mathbb{A}$ and a word $w\in \mathcal{W}(\mathcal{I})$, we have $\mathrm{sp}_{\mathscr{W}}(\varphi_{q\to1})(w) = \varphi(w)(1)$.

Clearly, $\mathrm{sp}_{\mathscr{W}}((R\{k\}[q]^\ast)_{q\to 1})= R\{k\}$ is valid, for each $k\geq0$.

The specification of the monomial basis $\mathscr{M}$ to $\mathbf{f}_{\mathbb{A}}$ provides a similar specialization map on the dual of Lusztig's algebra. Namely, we set a group isomorphism
\[
\mathrm{sp}_{\mathscr{M}}: (\mathbf{f}_{\mathbb{A}}^\ast)_{q\to 1}\overset{\sim}{\rightarrow} \Z[\seg]\;,
\]
by setting $\mathrm{sp}_{\mathscr{M}}(1\otimes m_{\m}^\ast) = \exp(\m)$, for each element of the dual monomial basis $\{m_{\m}^\ast\}_{\m\in \Z_{\geq0}^{\seg}}$.

\subsubsection{Canonical basis}

For each $\mathbf{f}_n$, $n\geq0$, Lusztig \cite{lusztig-canon} constructed a \textit{canonical} basis $\mathscr{B}_n$ as a $\mathbb{A}$-module.

The resulting basis $\mathscr{B}= \sqcup_n \mathscr{B}_n$ for $\mathbf{f}_{\mathbb{A}}$ is then specialized to a complex basis $\{1\otimes1\otimes  b \}_{b\in \mathscr{B}}$ in $U(\mathfrak{sl}_\infty)^+\cong \mathbb{C}\otimes_{\Z} \mathrm{ev}_1\otimes_{\mathbb{A}}\mathbf{f}_{\mathbb{A}}$ with favorable Lie-theoretic properties.

The link of the quantum group setting with our previous discussion of the representation theory of affine Hecke algebras appears through the following construction.

\begin{theorem}\label{thm:ch-quant}\cite[Section 6.8, Theorem 47]{leclerc-shuffle}(interpreting \cite{ariki-decomp})

For an integer $n\geq0$, let us consider the quotient $\mathbb{A}$-morphism $D_n: R_n[q]\to \mathbf{f}_n$.

There exists a unique bijection 
    \[
    \mathscr{B}_n \cong \Irr_1(n)\;,\quad b= b_L\in \mathscr{B}_n\;\leftrightarrow L = L_b\in \Irr_1(n)\;, 
    \]
between the canonical basis elements of the free $\mathbb{A}$-module $\mathbf{f}_n$ and the isomorphism classes of simple $H_n$-modules, such that the resulting group isomorphism that is given by
\[
\Phi_n:(\mathbf{f}^\ast_n)_{q\to 1} \overset{\sim}{\rightarrow} G_1(H_n) = \Z^{\Irr_1(n)}\;,\quad \Phi(1\otimes b^\ast) = L_b\;,
\]
where $\{b^\ast\}_{b\in \mathscr{B}_n}\subset \mathbf{f}^\ast_n$ is the dual basis to $\mathscr{B}_n$, makes the following diagram
\[
\begin{diagram}
\dgARROWLENGTH=2em
\node{ (\mathbf{f}^\ast_n)_{q\to 1} } \arrow{s,t}{\Phi_n }  \arrow{e,t}{(D_n^\ast)_{q\to1}}\node{  (R_n[q]^\ast)_{q\to1} }\arrow{s,t}{ \mathrm{sp}_{\mathscr{W}}}\\
\node{G_1(H_n) } \arrow{e,t}{ \Large \mathrm{ch} } \node{ R_n  }
\end{diagram}\;
\]
commute.

Equivalently, for a word $w\in \mathcal{W}_n$, when writing the expansion 
\[
\overline{w} = D_n(w) = \sum_{L\in \Irr_1(n)} c_{w,L} b_L\in \mathbf{f}_n\;,
\]
with $c_{w,L}\in \mathbb{A}$, we have
\[
c_{w,L}(1) = [L:w]\;,
\]
for all $L\in \Irr_1(n)$. 
\end{theorem}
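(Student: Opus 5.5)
The plan is to deduce the statement from the categorification theorem of Ariki and Grojnowski in its shuffle-algebra form, as in \cite{leclerc-shuffle}, rather than to re-prove the geometric input. The argument has three steps: (i) a reduction of the commutative diagram to the coefficient identity $c_{w,L}(1)=[L:w]$; (ii) a check that the characters of modules span exactly the image of $\mathbf{f}^\ast_n$ specialized at $q=1$; (iii) the matching of simple modules with dual canonical basis elements. Uniqueness of the bijection will be automatic, because $\mathrm{ch}$ is injective (\cite[Theorem 5.3.1]{klsh-book}) and $D_n^\ast$ is injective, $D_n$ being surjective.

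\emph{Step (i): the reformulation.} Evaluate both compositions of the diagram on $1\otimes b^\ast$ and pair with a word $w\in\mathcal{W}_n$. By the description of $\mathrm{sp}_{\mathscr{W}}$ (namely $\mathrm{sp}_{\mathscr{W}}(\varphi_{q\to1})(w)=\varphi(w)(1)$), the upper route gives the coefficient $b^\ast(\overline{w})(1)=c_{w,L_b}(1)$. The lower route gives the coefficient $[L_b:w]$ of $w$ in $\mathrm{ch}(L_b)$. Hence commutativity of the diagram is exactly the identity $c_{w,L}(1)=[L:w]$ for all $w$ and $L$, so the two formulations in the statement are equivalent and it suffices to prove the second one.

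\emph{Step (ii): the image of characters.} Dualizing $D_n$ embeds $\mathbf{f}^\ast_n$ into $R_n[q]^\ast$. This is the quantum shuffle realization: $\mathbf{f}^\ast_n$ is the subspace of $q$-shuffle-algebra elements annihilated by the quantum Serre relations \eqref{eq:quantum-serre}. At $q=1$ it becomes the subspace of the ordinary shuffle algebra annihilated by the classical Serre relations. I would then show that $\mathrm{ch}(G_1(H_n))\subseteq \mathrm{sp}_{\mathscr{W}}((D_n^\ast)_{q\to1}(\mathbf{f}^\ast_n))$ by checking that characters satisfy the classical Serre relations. Using \eqref{eq:word-concat}, this reduces to rank-$2$ and rank-$3$ computations. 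For $H_2$ and letters $\alpha_i,\alpha_j$ with $|i-j|>1$, any module has $[V:\cdots\alpha_i\alpha_j\cdots]=[V:\cdots\alpha_j\alpha_i\cdots]$, since $T_1$ intertwines the two weight spaces when $\eta v^{2i}\neq\eta v^{2j\pm 2}$. For $|i-j|=1$, the analogous statement is the Serre identity for $H_3$-characters. Both follow from the intertwiner calculus of \cite{klsh-book}. Equality of the two images then follows by comparing ranks: both are free of rank equal to the number of multisegments of weight $n$, by the Zelevinsky classification and the monomial or PBW basis count.

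\emph{Step (iii): matching simples with the dual canonical basis.} This is the step I expect to be the main obstacle, since it carries all of the geometric content. I would invoke Ariki's theorem \cite{ariki-decomp}, or the KLR version via \cite{vv11} transported along the isomorphism of affine Hecke algebras with quiver Hecke algebras (\cite[Section 3.2.1]{gur-jlms}). It identifies the classes $[L]$, under the embedding of step (ii), with the specialization at $q=1$ of Lusztig's dual canonical basis.

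In the Hecke-algebra language, the argument runs as follows. Simple modules correspond to perverse sheaves on the graded nilpotent-orbit varieties, by the Chriss--Ginzburg and Kazhdan--Lusztig convolution realization. These varieties coincide with Lusztig's type $A$ quiver representation varieties, whose simple perverse sheaves define $\mathscr{B}$. Weight multiplicities $[L:w]$ are then stalk computations that match the pairing $\langle b^\ast,\overline{w}\rangle$ evaluated at $q=1$. This yields a bijection $b\leftrightarrow L_b$ with $\mathrm{ch}(L_b)=\mathrm{sp}_{\mathscr{W}}((D_n^\ast b^\ast)_{q\to1})$, which is precisely the claimed diagram.
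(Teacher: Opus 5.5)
Your proposal follows the paper's route: the paper gives no independent argument and simply cites Leclerc's shuffle-algebra formulation of Ariki's theorem, which is exactly your Step (iii), and your Step (i) correctly checks that the diagram is equivalent to the coefficient identity $c_{w,L}(1)=[L:w]$. Step (ii) is redundant once Step (iii) is cited, and it would not stand on its own, for two reasons: equal ranks (compared weight by weight, since for fixed $n$ there are infinitely many multisegments) only give a finite-index inclusion of lattices, not equality; and landing in the image of $\mathrm{sp}_{\mathscr{W}}$ also requires the divisibility $\kappa_w(1)\mid[V:w]$, not just the classical Serre relations.
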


The maps $\Phi_n$ in the last theorem can also be combined into a group isomorphism
\[
\Phi: (\mathbf{f}_\mathbb{A}^\ast)_{q\to1} \to \mathbf{R_1}\;.
\]

\begin{theorem}\label{thm:rec-ch}
The reciprocal character map $\ch^\otimes: \mathbf{R_1}\to \Z[\seg]$ coincides with the specialization at $q=1$ of the coordinate transition map between the duals of the canonical basis $\mathscr{B}$ and the monomoial basis $\mathscr{M}$ for $\mathbf{f}_{\mathbb{A}}$, when viewed through the Zelevinsky classification.

Namely, the commutation relation
\[
\begin{diagram}
\dgARROWLENGTH=2em
\node{ (\mathbf{f}^\ast_{\mathbb{A}})_{q\to 1} } \arrow{s,t}{\Phi }  \arrow{se,t}{ \mathrm{sp}_{\mathscr{M}}}\\
\node{\mathbf{R_1} } \arrow{e,t}{ \Large \ch^\otimes } \node{ \Z[\seg]  }
\end{diagram}\;
\]
holds, with the map $\Phi$ from Theorem \ref{thm:ch-quant}.

Equivalently, when expanding $m_{\m}= \sum_{L\in \Irr_1(n)} h_{\m,L} b_{L}\in \mathbf{f}_n$, with $h_{\m,L}\in \mathbb{A}$, for a multisegment $\m\in (\Z_{\geq0}^{\seg})_n$, we have
\[
h_{\m,L}(1)=[\rres_{\underline{n_{\m}}}(L)\,:\, \zeta(\m)^\otimes]\;,
\]
for all $L\in \Irr_1(n)$.
\end{theorem}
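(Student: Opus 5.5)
The plan is to deduce the statement by combining Theorem \ref{thm:ch-quant} with Proposition \ref{prop:char-word}, after a normalization computation relating $w(\m)'$ and $w(\m)$. First I reduce to the equivalent coefficient form. By definition of $\mathrm{sp}_{\mathscr{M}}$, for a simple $L$ with $[L]=\Phi(1\otimes b_L^\ast)$ we have $\mathrm{sp}_{\mathscr{M}}(1\otimes b_L^\ast)=\sum_{\m} b_L^\ast(m_{\m})(1)\exp(\m)=\sum_{\m} h_{\m,L}(1)\exp(\m)$. Meanwhile, $\ch^\otimes(L)=\sum_{\m}[\rres_{\underline{n_{\m}}}(L):\zeta(\m)^\otimes]\exp(\m)$. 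Since the $b_L^\ast$ form a basis and all maps are additive, commutativity of the diagram is equivalent to the identity $h_{\m,L}(1)=[\rres_{\underline{n_{\m}}}(L):\zeta(\m)^\otimes]$ for all $\m$ and $L$.

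Next, I expand $m_{\m}=\overline{w(\m)'}=\kappa_{w(\m)}^{-1}\overline{w(\m)}$. By Theorem \ref{thm:ch-quant}, $\overline{w(\m)}=\sum_L c_{w(\m),L}b_L$ with $c_{w(\m),L}(1)=[L:w(\m)]$. Since $m_{\m}$ lies in the $\mathbb{A}$-lattice $\mathbf{f}_{\mathbb{A}}$, the quotients $h_{\m,L}=c_{w(\m),L}/\kappa_{w(\m)}$ lie in $\mathbb{A}$. Evaluating at $q=1$, where $\kappa_w(1)=\prod m_s!$, gives $h_{\m,L}(1)=[L:w(\m)]/\kappa_{w(\m)}(1)$. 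Proposition \ref{prop:char-word} turns the numerator into $r_{\m}\cdot[\rres_{\underline{n_{\m}}}(L):\zeta(\m)^\otimes]$. So everything reduces to the numerical identity
\[
r_{\m}=\kappa_{w(\m)}(1).
\]

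This identity is the main obstacle, and I would handle it in two parts. First, the block structure. The word $w(\m)=w_{\beta_1}\cdots w_{\beta_l}$ is a concatenation of descending blocks. At a junction, the last letter of block $i$ is $\alpha_{\min\supp\beta_i}$ and the first letter of block $i+1$ is $\alpha_{d_{i+1}}$, with $d_{i+1}>d_i\ge\min\supp\beta_i$. Hence runs of equal letters never cross blocks, and $\kappa_{w(\m)}=\prod_i\kappa_{w_{\beta_i}}$. Also $r_{\m}=\prod_i r_{\beta_i}$, so it suffices to show $r_\beta=[Z(\m_\beta):w_\beta]=\prod_a\beta(a)!$ when $\m_\beta$ is right-aligned.

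For this I would compute $\ch(Z(\m_\beta))$ via Proposition \ref{prop:mackey} restricted to $(1,\ldots,1)$. $M(\m_\beta)$ is simple by Lemma \ref{lemma:m_circ}. Restricting $Z([a_1,b])\times\cdots\times Z([a_r,b])$ to singletons gives shuffles of the descending words $\alpha_b\alpha_{b-1}\cdots\alpha_{a_i}$. The number of shuffles yielding the fully sorted descending word is exactly $\prod_a\beta(a)!$, since at each level the $\beta(a)$ copies of $\alpha_a$ can be assigned to the segments containing $a$ in any order.

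An alternative route is to use that the divided-power word $w_\beta'$ projects to a canonical basis element, namely Lusztig's result that $\overline{w_\beta'}$ equals $b_{Z(\m_\beta)}$ for such words. I would keep this only as a cross-check, and rely on the direct shuffle count.
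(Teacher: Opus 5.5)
Your proposal is correct and follows the paper's proof: it makes the same reduction via Theorem~\ref{thm:ch-quant} and Proposition~\ref{prop:char-word} to the identity $\kappa_{w(\m)}(1)=r_{\m}$, and uses the same factorization over the descending blocks $w_{\beta_i}$, which you justify explicitly where the paper calls it evident. The only difference is the last step: the paper proves $[M(\m_\beta):w_\beta]=\kappa_{w_\beta}(1)$ by induction on $|\beta|$, peeling off $\alpha_b^s$ through the restriction to $(s,n-s)$, whereas your direct shuffle count in the restriction to $(1,\ldots,1)$ gives $\prod_a\beta(a)!$ in one step.
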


\begin{proof}
Since $\kappa_{w(\m)}\cdot m_{\m} =\overline{w(\m)}$, according to Theorem \ref{thm:ch-quant}, we have 
\[
\kappa_{w(\m)}(1)\cdot h_{\m,L}(1) = [L:w(\m)]\;.
\]
Thus, by Proposition \ref{prop:char-word}, it is enough to prove that $\kappa_{w(\m)}(1) = r_{\m}$.

Writing $w(\m)= w_{\beta^{\m}_1}\cdots w_{\beta^{\m}_l}$ as in Section \ref{sect:indicators}, it is evident that $\kappa_{w(\m)} = \kappa_{w_{\beta^{\m}_1}}\cdots \kappa_{w_{\beta^{\m}_l}}$. Hence, it is suffices to show that for any $\beta\in \Z_{\geq0}^{\mathcal{I}}$ with right-aligned $\m_{\beta}$,
\[
\kappa_{w_{\beta}}(1) = [Z(\m_\beta):w_\beta] = [M(\m_\beta):w_\beta]
\]
holds.

We prove it by induction on $n=|\beta|$.

Let us write $\underline{\m_{\beta}} = [a_1,b]\cdot\ldots\cdot[a_s,b]\in \mathcal{W}(\seg)$ for an ordering of $\m_{\beta}$, $\beta_0 = s\cdot \alpha_b = \epsilon(\m_{\beta})$ and $\gamma=\beta- \beta_0$. 

Then, $\m_\gamma = \sum_{i=1}^s[a_i,b-1]$ (with a convention of $[b,b-1]=0$) is right-aligned, $w_\beta = \alpha_b^s w_{\gamma} = w_{\beta_0} w_{\gamma}$, and $\kappa_{w_\beta} = [s]!\cdot \kappa_{w_\gamma}$.

Since $|\gamma| = n-s$, by induction it remains to show that $[M(\m_\beta):w_\beta] = s! [M(\m_\gamma):w_\gamma]$.

A straightforward application of Mackey theory (Proposition \ref{prop:mackey}) shows that $Z(\m_{\beta_0}) = M(\m_{\beta_0})= Z([b,b])^{\times s}$ has $\ch(Z(\m_{\beta_0})) = s!\cdot \alpha_b^s = s!\cdot w_{\beta_0}$.

It is also easy to see that there is a unique bitableau $\mathcal{Q} = \mathcal{Q}_1\cdots \mathcal{Q}_s\in \mathcal{K}\left(\underline{\m_{\beta}},2\right)$ with $\supp(\m_{\mathcal{Q},1}) = \supp(\m_{\beta_0})$. Namely, it is the tableau given by
\[
\mathcal{Q}_i = \left\{ \begin{array}{ll} (b,1)(a_i,2)  & a_i<b \\ (b,1) & a_i=b\end{array} \right.\;,
\]
for $i=1,\ldots,s$. It then follows that $\m_{\mathcal{Q},1}= \m_{\beta_0}$ and $\m_{\mathcal{Q},2}= \m_{\gamma}$.

Thus, when computing $\rres_{(s,n-s)}(M(\m_{\beta}))$ through Proposition \ref{prop:mackey}, we see that 
\[
[\rres_{(s,n-s)}(M(\m_{\beta})): M(\m_{\beta_0})\boxtimes M(\m_{\gamma})]=1\;.
\]

Finally, by Proposition \ref{prop:descending-spherical} and equation \eqref{eq:word-concat}, we see that
\[
\begin{array}{ll} [M(\m_\beta):w_\beta]& = [\rres_{(s,n-s)}(M(\m_{\beta})): M(\m_{\beta_0})\boxtimes M(\m_{\gamma})]\cdot [Z(\m_{\beta_0}):w_{\beta_0}]\cdot [Z(\m_\gamma):w_\gamma] =\\
 &= 1\cdot s!\cdot [Z(\m_\gamma):w_\gamma]\end{array}
\]
holds, as desired.
\end{proof}

\begin{corollary}
The reciprocal character map $\ch^\otimes: G_1(H_n)\to \Z[\seg]_n$ is a group isomorphism.

In particular, the semisimplification $[V]\in G_1(H_n)$ of any module $V\in \Rep_1(H_n)$ is determined by its reciprocal multiplicities 
\[
\left\{[\rres_{\underline{n_L}}(M):\zeta(L)^\otimes] \;:\; L\in \Irr_1(n)\right\}\;.
\]
\end{corollary}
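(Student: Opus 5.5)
The plan is to obtain the corollary by factoring $\ch^\otimes$ through two isomorphisms, using Theorem \ref{thm:rec-ch}. That theorem gives the identity
\[
\ch^\otimes\circ\Phi = \mathrm{sp}_{\mathscr{M}}
\]
of maps $(\mathbf{f}^\ast_{\mathbb{A}})_{q\to1}\to\Z[\seg]$. We then restrict to the degree $n$ component and check that every other map in the triangle is a group isomorphism.

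First, $\Phi_n:(\mathbf{f}^\ast_n)_{q\to1}\to G_1(H_n)$ is an isomorphism by Theorem \ref{thm:ch-quant}. It sends the basis $\{1\otimes b^\ast\}_{b\in\mathscr{B}_n}$ bijectively onto the basis $\Irr_1(n)$ of $\Z^{\Irr_1(n)}$. Here we use that $\mathbf{f}_n$ is a free $\mathbb{A}$-module with basis $\mathscr{B}_n$, so $\mathbf{f}_n^\ast$ is free on the dual basis and its specialization is free abelian on the images $1\otimes b^\ast$.

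Second, $\mathrm{sp}_{\mathscr{M}}$ restricted to degree $n$ is an isomorphism onto $\Z[\seg]_n$. This uses the proposition that $\mathscr{M}_n=\{m_\m\}_{\m\in(\Z_{\geq0}^{\seg})_n}$ is an $\mathbb{A}$-basis of $\mathbf{f}_n$. Consequently $\{1\otimes m_\m^\ast\}$ is a $\Z$-basis of $(\mathbf{f}^\ast_n)_{q\to1}$, and by definition it is sent to the monomial basis $\{\exp(\m)\}$ of $\Z[\seg]_n$.

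The one point needing care is that the maps involved are graded. The reciprocal character carries $G_1(H_n)$ into $\Z[\seg]_n$, since the sum in \eqref{eq:recip-def} runs only over $\m\in(\Z_{\geq0}^{\seg})_n$. Also, $\Phi$ is assembled from the $\Phi_n$ degree by degree. The triangle therefore splits into degree $n$ pieces, giving
\[
\ch^\otimes|_{G_1(H_n)}=\mathrm{sp}_{\mathscr{M}}|_{(\mathbf{f}_n^\ast)_{q\to1}}\circ\Phi_n^{-1},
\]
which is a composition of isomorphisms.

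The second statement follows from injectivity. For $V\in\Rep_1(H_n)$, the class $[V]$ is recovered from $\ch^\otimes([V])$, and the coefficients of $\ch^\otimes([V])$ are exactly the multiplicities $[\rres_{\underline{n_{\m}}}(V):\zeta(\m)^\otimes]$. The Zelevinsky bijection $\m\mapsto Z(\m)=L$ turns these into the listed numbers $[\rres_{\underline{n_L}}(V):\zeta(L)^\otimes]$, and $L$ ranges over $\Irr_1(n)$ precisely when $\m$ ranges over $(\Z_{\geq0}^{\seg})_n$.

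I expect no real obstacle. The only check beyond bookkeeping is that specializing a dual of a free module of finite rank in each degree commutes with taking dual bases. This holds because each $\mathbf{f}_n$ is free of finite rank, being a direct limit of finite-rank free modules whose rank is eventually constant once $k\geq$ the support bound imposed by height $n$. In fact, only freeness with a given basis is needed.
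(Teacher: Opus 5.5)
Your argument is the paper's own. The corollary is stated immediately after Theorem \ref{thm:rec-ch} as a direct consequence of the identity $\ch^\otimes=\mathrm{sp}_{\mathscr{M}}\circ\Phi^{-1}$, where $\Phi$ and $\mathrm{sp}_{\mathscr{M}}$ are both isomorphisms. Your degree bookkeeping and your deduction of the second statement through the Zelevinsky bijection are fine.

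One correction to your closing paragraph: $\mathbf{f}_n$ is not of finite rank. The height $n$ puts no bound on the indices of the letters; already $\overline{\alpha_i}\in\mathbf{f}_1$ for every $i\in\Z$, and likewise $\Irr_1(n)$ and $(\Z_{\geq0}^{\seg})_n$ are infinite. In infinite rank, freeness with a given basis is not enough. The dual families $\{b^\ast\}$ and $\{m_{\m}^\ast\}$ of two bases need not span the same submodule of $\Hom_{\mathbb{A}}(\mathbf{f}_n,\mathbb{A})$. Also, the full dual specializes to a product $\prod\Z$ rather than to $\Z^{\Irr_1(n)}$.

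The correct justification is to refine the grading by weight $\beta\in\Z_{\geq0}^{\mathcal{I}}$ with $|\beta|=n$:
\begin{itemize}
\item each $\mathbf{f}_\beta$ is free of finite rank;
\item the canonical and monomial bases are homogeneous, with $m_{\m}\in\mathbf{f}_{\supp(\m)}$;
\item $G_1(H_n)$ splits by central character;
\item by Remark \ref{rem:chars-cent}, $\ch^\otimes$ sends the block of central character $\beta$ into the span of $\{\exp(\m):\supp(\m)=\beta\}$.
\end{itemize}
Reading $\mathbf{f}^\ast$ as the graded dual $\bigoplus_\beta\mathbf{f}_\beta^\ast$, your argument then runs verbatim block by block.

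Since the paper simply takes $\Phi_n$ and $\mathrm{sp}_{\mathscr{M}}$ to be isomorphisms, your main line is complete relative to the paper. Only this side remark needs replacing.
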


\subsection{PBW bases}\label{sect:pbw}
We conclude this section by recalling another important class of bases of $\mathbf{f}_{\mathbb{A}}$, arising naturally in the theory of quantum groups. These are the quantum analogues of the classical \emph{PBW bases} of universal enveloping algebras.

A PBW basis consists of a compatible family of bases
\[
\mathscr{E}_n = \{e_{\m}\in \mathbf{f}_n\;:\;\m \in (\Z_{\geq0}^{\seg})_n\}\;,
\]
for each integer $n\geq0$, which satisfies the following proposition.

\begin{proposition}\label{prop:pbw}
\begin{enumerate}
    \item For each $k\geq0$, the subset $\mathscr{E}^k_n= \{e_{\m}\;:\;\m \in (\Z_{\geq0}^{\seg\{k\}})_n\}$ $\mathbb{A}$-generates the submodule $\mathbf{f}_n\{k\}<\mathbf{f}_n$. 
    \item For each $k\geq0$, the resulting basis $\mathscr{E}^k:= \bigsqcup_n \mathscr{E}^k_n$ for $\mathbf{f}\{k\}$ specializes to a PBW-basis $\left\{1\otimes 1\otimes e_{\m}\;:\; \m\in \Z_{\geq0}^{\seg\{k\}}\right\}$ for the universal enveloping algebra $U(\mathfrak{sl}_{2k+2})^+\cong \mathbb{C}\otimes_{\Z}\mathrm{ev}_1\otimes_{\mathbb{A}}\mathbf{f}\{k\}$.

\item\label{it:pbw3} There exists a labeling of the canonical basis $\mathscr{B}_n = \{b_{\m}\::\:\m\in (\Z_{\geq0}^{\seg})_n\}$ by multisegments, so that the expansions $b_{\n}= \sum_{\m\in (\Z_{\geq0}^{\seg})_n} a_{\n,\m} e_{\m}\in \mathbf{f}_n$ satisfy $a_{\m,\m}=1$, and $a_{\n,\m}\in v\Z[v]$, when $\n\neq\m$.

\end{enumerate}

\end{proposition}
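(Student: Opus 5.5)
The plan is to reduce everything to the finite-rank case $\mathbf{f}_{\mathbb{A}}\{k\}$, where all three properties are classical results of Lusztig. The only genuinely new point is to choose the finite-rank PBW bases compatibly in $k$, so that they glue along the direct limit $\mathbf{f}_{\mathbb{A}}=\varinjlim_k \mathbf{f}_{\mathbb{A}}\{k\}$.

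\textbf{Step 1: root vectors.} Positive roots of the $A_{2k+1}$ datum on $\mathcal{I}_{\leq k}$ correspond to segments in $\seg\{k\}$ via $[a,b]\mapsto \supp([a,b])$. I would fix, once and for all, a convex ordering on $\seg$ whose restriction to each $\seg\{k\}$ is the convex ordering attached to a reduced expression of the longest Weyl element $w_0^{(k)}$. A natural choice is the one adapted to a fixed orientation of the $A_\infty$ quiver, matching the ordering implicit in $w(\m)$. I would then define root vectors recursively by $q$-commutators:
\[
E_{[a,a]}=\overline{\alpha_a},\qquad E_{[a,b]}=E_{[a+1,b]}E_{[a,a]}-q^{-1}E_{[a,a]}E_{[a+1,b]} .
\]
This is consistent with the example in the introduction, where $e$ for $[1,2]$ is $\overline{w[2]}\,\overline{w[1]}-q^{-1}\overline{w[1]}\,\overline{w[2]}$. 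For $[a,b]\in\seg\{k\}$, these elements coincide with Lusztig's braid-group root vectors $T_{i_1}\cdots T_{i_{r-1}}(E_{i_r})$ for the chosen reduced word. I would check this by the standard rank-two computation $T_i(E_j)=E_jE_i-q^{-1}E_iE_j$ for $|i-j|=1$ (in the appropriate convention), followed by induction on $b-a$.

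The crucial observation is that the recursive formula does not depend on $k$. Consequently the inclusion $\mathbf{f}\{k\}\hookrightarrow\mathbf{f}\{k+1\}$ carries root vectors to root vectors.

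\textbf{Step 2: the basis and properties (1) and (2).} I would set $e_{\m}$ to be the ordered product, along the fixed convex order, of the divided powers $E_{\Delta}^{(r(\m,\Delta))}$. Lusztig's PBW theorem \cite{lusztig-quantum-book} for $A_{2k+1}$ then shows that $\{e_{\m}\}_{\m\in\Z_{\geq0}^{\seg\{k\}}}$ is an $\mathbb{A}$-basis of $\mathbf{f}_{\mathbb{A}}\{k\}$, and this basis is homogeneous, which gives (1). By the $k$-independence established in Step 1, these bases glue to a basis of $\mathbf{f}_{\mathbb{A}}$.

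For (2), I would specialize at $q=1$. The $q$-commutators become Lie brackets, so $E_{[a,b]}$ specializes to the root vector $e_{\alpha_a+\cdots+\alpha_b}$ of $\mathfrak{sl}_{2k+2}$. Divided powers become ordinary divided powers, so the classical PBW theorem applies.

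\textbf{Step 3: property (3).} For each $k$, I would invoke Lusztig's characterization of the canonical basis in \cite{lusztig-canon}. The $\mathbb{Z}[q^{-1}]$-lattice $\mathcal{L}$ spanned by the PBW basis is independent of the reduced expression, and it is stable under the bar involution modulo $q^{-1}\mathcal{L}$. The canonical basis $b_{\m}$ is then the unique bar-invariant element of $\mathcal{L}$ congruent to $e_{\m}$ modulo $q^{-1}\mathcal{L}$. This yields unitriangularity, with off-diagonal coefficients in the positive-degree part of the deformation variable (written $v\Z[v]$ in the statement, with the paper's change of variables).

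Since $\mathscr{B}$ is intrinsic and compatible with the inclusions $\mathbf{f}\{k\}\subset\mathbf{f}\{k+1\}$, the labeling $b_{\m}$ is consistent across $k$, and the statement follows in the limit.

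\textbf{Main obstacle.} I expect the main difficulty to be the compatibility in Step 1. I need a single convex order on the infinite set $\seg$ that restricts to a reduced-word order for every $k$, so that the rank-$k$ root vectors coincide with the recursive $k$-free ones. My first attempt would be to use the order "$[a,b]<[c,d]$ iff $b<d$, or $b=d$ and $a<c$". I would verify that it is convex on every $\seg\{k\}$, which amounts to checking that it comes from the quiver orientation with all arrows $i\to i+1$. Once this is settled, I would cross-check the outcome against Leclerc's dual PBW basis built from good Lyndon words \cite{leclerc-shuffle}, which is also $k$-independent.
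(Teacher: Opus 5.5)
Your proposal is correct and follows the route the paper implicitly takes. The paper gives no proof of this proposition and simply recalls it as a standard part of Lusztig's finite-type theory. Your $k$-independent convex order on $\seg$ (by end point, then by start point, which is indeed convex on each $\seg\{k\}$ and matches the paper's worked example), together with the recursive $q$-commutator root vectors, is exactly the bookkeeping needed to pass to $\mathbf{f}_{\mathbb{A}}=\varinjlim_k\mathbf{f}_{\mathbb{A}}\{k\}$.

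One small correction to Step 3. The PBW lattice $\mathcal{L}$ is not bar-stable modulo $q^{-1}\mathcal{L}$: writing $\psi$ for the bar involution, $\psi(E_{[1,2]})-E_{[1,2]}=(q^{-1}-q)E_{[1,1]}E_{[2,2]}\notin\mathcal{L}$. Moreover, congruence modulo $q^{-1}\mathcal{L}$ alone only gives $a_{\m,\m}\in 1+q^{-1}\Z[q^{-1}]$. The input you should cite is Lusztig's unitriangularity of $\psi$ on the PBW monomials. Combined with uniqueness, it yields $b_{\m}\in e_{\m}+\sum_{\m'\neq\m}q^{-1}\Z[q^{-1}]\,e_{\m'}$ with diagonal coefficient exactly $1$.
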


The notion of a PBW basis is a standard part of the treatment of quantum groups in \cite{lusztig-canon,lusztig-quantum-book}. In fact, the typical approach to constructing the canonical basis $\mathscr{B}_n$ proceeds via a recursive construction of a PBW basis $\mathscr{E}_n$, after which $\mathscr{B}_n$ is uniquely characterized by the triangularity condition in Proposition~\ref{prop:pbw}\eqref{it:pbw3}.

We note that a PBW basis is not canonical: it depends on auxiliary Lie-theoretic choices, and thus varies within a natural family of possible bases. For our purposes though, we fix once and for all a PBW basis $\mathscr{E} = \bigsqcup_n \mathscr{E}_n$ of $\mathbf{f}_{\mathbb{A}}$, chosen so as to be compatible with our conventions in the representation theory of affine Hecke algebras and the Zelevinsky classification. The relevance and pinning of this choice are summarized in the following proposition.

\begin{proposition}\label{prop:kl}
\cite[Theorems 7,12]{lnt}

\begin{enumerate}

    \item For each integer $n\geq0$, the dual basis $\{e_{\m}^\ast\}\subset \mathbf{f}_n^\ast$ to $\mathscr{E}_n$ specializes to the collection of products of segment modules under the map $\Phi$ from Theorem \ref{thm:ch-quant}. More precisely, 
    \[
    \Phi(1\otimes e_{\m}^\ast) = M(\m) = [\zeta(\m)]\in G_1(H_n)
    \]
holds, for all $\m\in (\Z_{\geq0}^{\seg})_n$.

Equivalently, the diagram
\[
\begin{diagram}
\dgARROWLENGTH=2em
\node{ (\mathbf{f}^\ast_{\mathbb{A}})_{q\to 1} }
  \arrow{s,t}{\Phi}
  \arrow{se,t}{\mathrm{sp}_{\mathscr{E}}}
\\
\node{\mathbf{R_1}}
\node{ \Z[\seg] }
\arrow{w,b}{\underline{\zeta}}
\end{diagram}\;
\]
commutes, where $\mathrm{sp}_{\mathscr{E}}$ is the group isomorphism that is given $\mathrm{sp}_{\mathscr{E}}(1\otimes e_{\m}^\ast) = \exp(\m)$.

\item The labels of $\mathscr{B}$ that are provided by Theorem \ref{thm:ch-quant} and Proposition \ref{prop:pbw} agree through the Zelevinsky classification, i.e. for any multisegment $\m\in \Z_{\geq0}^{\seg}$, 
\[
b_{Z(\m)} = b_{\m}\in \mathscr{B}
\]
holds.
\item\label{it:kl3}
Consequently, the transition polynomials $a_{\n,\m}$ of Proposition \ref{prop:pbw}\eqref{it:pbw3} satisfy
\[
[M(\m):Z(\n)]=[\zeta(\m):Z(\n)] = a_{\n,\m}(1)\;,
\]
for all $\n,\m\in \Z_{\geq0}^{\seg}$.

\end{enumerate}

\end{proposition}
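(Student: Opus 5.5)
The plan is to get all three items from Theorem \ref{thm:ch-quant} together with the injectivity of $\ch$, by comparing characters on both sides. Since $\ch\circ\Phi=\mathrm{sp}_{\mathscr{W}}\circ(D^\ast)_{q\to1}$ and $\ch$ is injective, item (1) is equivalent to the identity
\[
\mathrm{sp}_{\mathscr{W}}\bigl((D_n^\ast)_{q\to1}(1\otimes e_{\m}^\ast)\bigr)=\ch(M(\m))\in R_n ,
\]
for every $\m$. To evaluate the left-hand side I will use Leclerc's shuffle realization: $D^\ast$ embeds $\mathbf{f}^\ast_{\mathbb{A}}$ into the quantum shuffle algebra on words, and this embedding is multiplicative for the dual of the twisted coproduct. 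Under this embedding the dual PBW basis is, up to $q$-powers and factorial normalizations, the ordered quantum shuffle product of dual root vectors taken in a fixed convex order.

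The steps, in order, are as follows.

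First, I check the segment case. For $\Delta=[a,b]$ the dual root vector $e_\Delta^\ast$ corresponds to the single word $\alpha_b\alpha_{b-1}\cdots\alpha_a$ with coefficient $1$. This matches $\ch(Z(\Delta))$, because $y_i$ acts on $Z(\Delta)$ by $v^{2(b-i+1)}$. This is exactly where our choice of PBW basis is pinned.

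Second, I pass to products. At $q=1$ the quantum shuffle product becomes the ordinary commutative shuffle product. On the Hecke side, Mackey theory (Proposition \ref{prop:mackey}, with $s$ equal to the number of letters) shows that $\ch(V_1\times V_2)$ is the shuffle product of $\ch(V_1)$ and $\ch(V_2)$. Hence, for multiplicity-free $\m$, both sides are the shuffle of the segment words. The order of factors in the convex ordering only affects $q$-powers, which disappear at $q=1$.

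Third, I handle repeated segments, which I expect to be the main obstacle. When $\Delta$ occurs $r$ times in $\m$, the PBW element is a divided power $E_\Delta^{(r)}$. Its dual therefore carries a factor of roughly $[r]!$ relative to $(e_\Delta^\ast)^r$. This factor has to be matched against two things: the $\kappa_w(1)$ normalization built into $\mathrm{sp}_{\mathscr{W}}$, and the $r!$-fold repetition that arises in $\ch(Z(\Delta)^{\times r})$ when shuffling identical words. I would first verify this on $\Delta=[b,b]$. There $\ch(Z([b,b])^{\times r})=r!\,\alpha_b^r$, which was already computed in the proof of Theorem \ref{thm:rec-ch}, and $\kappa_{\alpha_b^r}(1)=r!$, so the two sides agree. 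I would then reduce the general repeated segment to this case by restriction along the last letter, using the same induction as in that proof.

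For item (2), I dualize the triangularity in Proposition \ref{prop:pbw}\eqref{it:pbw3}. The relation $b_{\n}=\sum_{\m}a_{\n,\m}e_{\m}$ gives $e_{\m}^\ast=\sum_{\n}a_{\n,\m}b_{\n}^\ast$. Specializing at $q=1$ and applying $\Phi$ together with item (1) yields
\[
M(\m)=\sum_{\n}a_{\n,\m}(1)\,L_{b_{\n}} .
\]
Here $a_{\m,\m}=1$, and $a_{\n,\m}\neq0$ only for $\n$ strictly below $\m$ in the PBW partial order; for type $A$ this is the degeneration order on multisegments with fixed support. On the Hecke side, $\zeta(\m)$ contains $Z(\m)$ exactly once, and every other composition factor $Z(\n)$ of $\zeta(\m)$ has $\n$ strictly below $\m$ in Zelevinsky's order. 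Once the two orders are identified, uniqueness of a unitriangular change of basis between $\{M(\m)\}$ and the simple classes forces $L_{b_{\m}}=Z(\m)$. I will prove this by induction up the order: at a minimal $\m$ we have $M(\m)=L_{b_{\m}}$ and $M(\m)$ is simple, so it equals $Z(\m)$, and the inductive step proceeds the same way. Finally, item (3) is immediate: read off the coefficient of $Z(\n)$ in the displayed expansion and use $[M(\m)]=[\zeta(\m)]$.
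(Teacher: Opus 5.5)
The paper does not prove this proposition. It imports it from \cite[Theorems 7, 12]{lnt}, and item (3) is obtained by dualizing Proposition \ref{prop:pbw}\eqref{it:pbw3}. You instead derive it from Theorem \ref{thm:ch-quant} along standard shuffle-algebra lines, and the route is sound:
\begin{itemize}
\item $\mathrm{sp}_{\mathscr{W}}\circ(D_n^\ast)_{q\to1}$ is the $q=1$ value of Leclerc's map $\varphi\mapsto\sum_w\varphi(\overline{w})\,w$. This map is multiplicative for the quantum shuffle product and carries dual PBW elements to ordered products of dual root vectors.
\item Proposition \ref{prop:mackey} expresses $\ch M(\m)$ as the shuffle product of the segment words.
\item Injectivity of $\ch$ then gives (1), and (2), (3) follow by unitriangularity.
\end{itemize}
Compared with the bare citation, your route makes explicit the normalization of $\mathscr{E}$ that the paper leaves to \cite{lnt}. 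That normalization is that $e^\ast_{[a,b]}\circ D$ is supported on the single word $\alpha_b\alpha_{b-1}\cdots\alpha_a$. This is consistent with $e_{[1,2]}=\overline{w[2]}\cdot\overline{w[1]}-q^{-1}\overline{w[1]}\cdot\overline{w[2]}$ in the introduction's example. Your route also shows that (2) is a formal consequence of (1). The price is that it relies on Leclerc's description of the dual PBW basis, which the citation hides.

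\textbf{Correction to step 3.} The obstacle you anticipate there does not exist, and your bookkeeping for it is wrong.
\begin{itemize}
\item Leclerc's description involves no factorials, only a power of $q$. The dual of $e_\Delta^{(r)}$ is a power of $q$ times the $r$-th quantum shuffle power of $e^\ast_\Delta$. For example, the dual of $\alpha_b^{(2)}$ is sent to $[2]\,\alpha_b\alpha_b$, which is a power of $q$ times the quantum shuffle square of $\alpha_b$.
\item $\kappa_w(1)$ is not an independent factor. $\mathrm{sp}_{\mathscr{W}}$ records $\varphi(\overline{w})(1)$ on the plain word, which is exactly the coefficient produced by the shuffle map.
\item The $r!$ coincidences from shuffling identical words occur on both sides.
\item If there really were an extra $[r]!$, nothing would absorb it, and you would get $r!$ times $\ch M(\m)$. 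Your $[b,b]$ check comes out right precisely because there is no such factor.
\end{itemize}
So steps 2 and 3 merge into one. At $q=1$, both $\mathrm{sp}_{\mathscr{W}}(1\otimes e^\ast_{\m}\circ D_n)$ and $\ch M(\m)$ equal the commutative shuffle product of the words $\alpha_b\cdots\alpha_a$ over the segments of $\m$, counted with multiplicity. The ``restriction along the last letter'' induction, which you never formulate on the quantum side, should be dropped.

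\textbf{Correction to item (2).} Proposition \ref{prop:pbw}\eqref{it:pbw3} as stated provides no partial order. Identifying Lusztig's order with Zelevinsky's is also an unproved extra claim. You need neither. By (1) and $a_{\m,\m}(1)=1$, the simple module $L_{b_{\m}}=Z(\sigma(\m))$ is a composition factor of $\zeta(\m)$. Zelevinsky's theorem \cite{zel-main} then gives $\sigma(\m)\le\m$, in particular with the same support. Thus $\sigma$ is an injective self-map of each finite support class that never moves an element upward. Any such map is the identity, so $\sigma=\mathrm{id}$.
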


It is known that the transition polynomials $a_{\n,\m}$ of Proposition \ref{prop:pbw}\eqref{it:pbw3} admit a geometric interpretation in terms of intersection cohomology of quiver representation varieties of type $A$. With the above choice of PBW basis, they are naturally identified with parabolic Kazhdan–Lusztig polynomials associated with permutation groups.

\begin{corollary}
The polynomials $s_{\n,\m}\in \mathbb{A}$ that give the transition coefficients 
\[
m_{\n} = \sum_{\m\in (\Z_{\geq0}^{\seg})_n} s_{\n,\m} e_{\n}\in \mathbf{f}_n
\]
between the monomial basis $\mathscr{M}_n$ and the PBW basis $\mathscr{E}_n$, satisfy
\[
s_{\n,\m}(1) = A(\n,\m)\;.
\]

\end{corollary}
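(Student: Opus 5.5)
The plan is to obtain the identity by dualizing Theorem \ref{thm:rec-ch} and Proposition \ref{prop:kl} against each other. Both express a map out of $(\mathbf{f}^\ast_{\mathbb{A}})_{q\to1}$ as "evaluate against a basis and specialize at $q=1$". Pairing the dual PBW element $e_{\m}^\ast$ with a monomial basis element $m_{\n}$ should then produce exactly a transition coefficient, on one side as a value of $\mathrm{sp}_{\mathscr{M}}$ and on the other as a reciprocal multiplicity of a standard module.

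Concretely, I would carry out the following steps.

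First, by linearity of the coordinate map, Theorem \ref{thm:rec-ch} says that for every $f\in\mathbf{f}_n^\ast$
\[
\ch^\otimes(\Phi(1\otimes f))=\mathrm{sp}_{\mathscr{M}}(1\otimes f)=\sum_{\n\in(\Z_{\geq0}^{\seg})_n} f(m_{\n})(1)\,\exp(\n)\;.
\]
This follows by writing $f=\sum_{\n} f(m_{\n})\,m_{\n}^\ast$ and noting that $1\otimes p\,m_{\n}^\ast=p(1)\otimes m_{\n}^\ast$.

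Second, I specialize to $f=e_{\m}^\ast$. By the defining expansion of $s_{\n,\m}$ (the coefficient of $e_{\m}$ in $m_{\n}$), we have $e_{\m}^\ast(m_{\n})=s_{\n,\m}$. Proposition \ref{prop:kl}(1) gives $\Phi(1\otimes e_{\m}^\ast)=[\zeta(\m)]$.

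Third, I compare the coefficients of $\exp(\n)$, using the definition \eqref{eq:recip-def} of $\ch^\otimes$. This yields
\[
s_{\n,\m}(1)=[\rres_{\underline{n_{\n}}}(\zeta(\m)):\zeta(\n)^\otimes]\;.
\]
This is a reciprocal multiplicity of the pair consisting of the standard module $\zeta(\m)$ and the indicator module of $\n$.

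The only delicate point, and the step I expect to be the real obstacle, is the bookkeeping of the two arguments of $A(\cdot,\cdot)$ against the two indices of $s_{\cdot,\cdot}$. In the stated expansion the displayed summand is written as $e_{\n}$, although the summation runs over $\m$. To reach $A(\n,\m)=[\rres_{\underline{n_{\m}}}(\zeta(\n)):\zeta(\m)^\otimes]$ exactly as worded, one must read $s_{\n,\m}$ so that the first index labels the standard module and the second labels the indicator; with that reading the conclusion is precisely the statement.

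I would first try to justify this reading from the display itself. If instead $s_{\n,\m}$ is read as the $e_{\m}$-coefficient of $m_{\n}$, the computation above yields the transposed identity $s_{\n,\m}(1)=A(\m,\n)$, which is the form $A(\m,\n)=s_{\n,\m}(1)$ asserted in the introduction. With that reading the corollary as worded does not hold. It should then be corrected to $s_{\n,\m}(1)=A(\m,\n)$, or else the expansion rewritten as $m_{\m}=\sum_{\n}s_{\n,\m}e_{\n}$, which makes $s_{\n,\m}(1)=A(\n,\m)$ literally correct.

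Either way, the mathematical content is the three-step pairing argument above. It needs no further input beyond Theorem \ref{thm:rec-ch} and Proposition \ref{prop:kl}(1).
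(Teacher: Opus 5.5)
Your proposal is correct and is essentially the paper's proof in dual form. The paper factors the transition through the canonical basis, $s_{\m,\n}=\sum_{\mathfrak{u}}h_{\m,Z(\mathfrak{u})}\,a_{\mathfrak{u},\n}$, specializes the two factors by Theorem \ref{thm:rec-ch} and Proposition \ref{prop:kl}\eqref{it:kl3}, and implicitly collapses the sum over $\mathfrak{u}$ by exactness of restriction; your direct use of $\ch^\otimes\circ\Phi=\mathrm{sp}_{\mathscr{M}}$ on $e_{\m}^\ast$, together with $\Phi(1\otimes e_{\m}^\ast)=[\zeta(\m)]$, packages exactly this. Your index diagnosis is also right: in the paper's proof $s_{\m,\n}$ is the $e_{\n}$-coefficient of $m_{\m}$, and the sum collapses to $[\rres_{\underline{n_{\m}}}(\zeta(\n)):\zeta(\m)^\otimes]=A(\n,\m)$, i.e.\ $A(\m,\n)=s_{\n,\m}(1)$ as in the introduction, so the displayed corollary has a typo ($e_{\n}$ for $e_{\m}$) and, read with the natural convention, transposed indices.
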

\begin{proof}
Writing the transition coefficients for the canonical basis as in Theorem \ref{thm:rec-ch} and Proposition \ref{prop:pbw}\eqref{it:pbw3}, we obtain
\[
s_{\m,\n} = \sum_{\mathfrak{u}\in (\Z_{\geq0}^{\seg})_n} h_{\m, Z(\mathfrak{u})} a_{\mathfrak{u},\n}\;.
\]
In light of Proposition \ref{prop:kl}\eqref{it:kl3}, evaluating the equation at $q=1$ gives
\[
s_{\m,\n}(1) = \sum_{\mathfrak{u}\in (\Z_{\geq0}^{\seg})_n}[\rres_{\underline{n_{\m}}}(Z(\mathfrak{u}))\,:\, \zeta(\m)^\otimes]\cdot [\zeta(\mathfrak{\n}):Z(\mathfrak{u})]\;.
\]

\end{proof}

\section{Quantum affine algebra modules and their $q$-characters}

\subsection{Limit $q$-character map}

\subsubsection{Leading-term dominant monoids}


Let $A=\bigsqcup_{i=1}^\infty A_i$ be a positively graded set.

For an element $0\neq \m\in \Z^A$, with $\m = \m_N + \m_{N-1} +\ldots + \m_1$, $\m_i\in \Z^{A_i}$, $i=1,\ldots, N$ and $\m_N\neq 0$, we define its degree to be $\deg(\m)=N$. 

We say that such $0\neq \m\in \Z^A$ is \textit{leading-term dominant}, when $\m_N\in \Z_{\geq0}^{A_N}$. The zero element $0\in \Z^{A}$ is assumed to be leading-term dominant of degree $0$.

The following proposition is easily verified.

\begin{proposition}\label{prop:dominated}

For a graded set $A$, the set of leading-term dominant elements 
\[
\Z_{\geq0}^A \subset \Z^{\{A\}}\subset \Z^A
\]
is a submonoid of $\Z^{A}$, and the equality $\deg ( \m + \n) = \max\{\deg(\m),\deg(\n)\}$ holds, for all $\m,\n\in \Z^{\{A\}}$.
\end{proposition}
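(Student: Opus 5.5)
Since $\Z^{\{A\}}\subset\Z^A$ is simply the set of leading-term dominant elements, the plan is to prove both claims directly from the graded decomposition $\Z^A=\bigoplus_i\Z^{A_i}$. First, $0$ lies in $\Z^{\{A\}}$ by convention, and $\Z_{\geq0}^A\subset\Z^{\{A\}}$ holds because the top-degree component of a nonzero element of $\Z_{\geq0}^A$ has nonnegative coefficients. So the real content is closure under addition together with the degree formula, and I will prove them simultaneously.

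Take $\m,\n\in\Z^{\{A\}}$. If either is zero, there is nothing to prove, so assume both are nonzero. Write $N=\deg(\m)$ and $M=\deg(\n)$, and assume without loss of generality that $N\geq M$. Because the graded components add separately, the degree-$i$ component of $\m+\n$ is $\m_i+\n_i$. All components of degree greater than $N$ vanish. I will then split into two cases.

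If $N>M$, then the degree-$N$ component of $\m+\n$ is $\m_N$, which is nonzero and lies in $\Z_{\geq0}^{A_N}$. Hence $\m+\n$ is leading-term dominant of degree $N=\max\{N,M\}$.

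If $N=M$, then the degree-$N$ component is $\m_N+\n_N$. This is a sum of two nonzero elements of $\Z_{\geq0}^{A_N}$. It is again in $\Z_{\geq0}^{A_N}$, and it is nonzero, because a sum of nonnegative functions vanishes only if both summands vanish. Hence $\deg(\m+\n)=N$ and $\m+\n$ is leading-term dominant.

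The only point that needs care is this second case, where cancellation in the leading term must be ruled out. Positivity of the leading components is exactly what rules it out, and it is also what makes the degree equality an exact maximum rather than merely an upper bound. Combined with $0\in\Z^{\{A\}}$, this shows that $\Z^{\{A\}}$ is a submonoid and that $\deg$ turns addition into $\max$.
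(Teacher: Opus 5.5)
Your argument is correct and is exactly the routine verification the paper leaves to the reader; the paper gives no written proof and only calls the proposition ``easily verified''. As you note, positivity of the leading components rules out cancellation in the equal-degree case. Your quick dismissal of the zero case uses that nonzero elements have degree at least $1$, which holds because the paper's grading is $A=\bigsqcup_{i\geq1}A_i$.
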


\subsubsection{Leading-term dominant power series}

Let us assume that $A$ is a positively graded set and consider the intermediate monoid
\[
\mathscr{Y}_+(A) < \mathscr{Y}\{A\}:= \exp(\Z^{\{A\}})< \mathscr{Y}(A)\;. 
\]

We would like to define a larger ring than $\Z^{\mathscr{Y}\{A\}}$, where finiteness assumptions are relaxed.

The natural group embeddings $\mathscr{Y}(A_{\leq N})< \mathscr{Y}(A)$ give rise to projection of spaces 
\[
\mathrm{res}_N:\mathrm{Maps}(\mathscr{Y}\{A\},\Z)\to \mathrm{Maps}(\mathscr{Y}(A_{\leq N}),\Z)
\]
by restriction.

In these terms, we define the abelian group
\[
\Z[A]<\Z\{A\} := \{F\in \mathrm{Maps}(\mathscr{Y}\{A\},\Z)\;:\; \mathrm{res}_{N}(F)\in \Z[A_{\leq N}^{\pm 1}]\;, \forall N\geq1\}\;.
\]

For convenience, we will employ a notation in which elements of $\Z\{A\}$ are treated as power series, and are written in terms of infinite integer combinations of elements of $\mathscr{Y}(A)$.

For example, when treating $\seg = \sqcup_{i=1}^\infty \seg_i$ as a positively graded set, we may write $F= \sum_{b=1}^\infty \exp([0,b])\exp(-[1,b])\in \Z\{\seg\}$, so that $\mathrm{res}_N(F) = \sum_{b=1}^N \exp([0,b])\exp(-[1,b])$, for all $N\geq1$.

\begin{proposition}
For a positively graded set $A$, the additive group $\Z\{A\}$ has a ring structure that is given by
\[
(F_1\cdot F_2)(\exp(\m)) = \sum_{\n_1,\n_2\in \Z^{\{A\}}\,:\, \n_1+\n_2 = \m}F_1(\exp(\n_1))F_2(\exp(\n_2))\;,
\]
for $F_1,F_2\in \Z\{A\}$ and $\m \in \Z^{\{A\}}$, which naturally extends the polynomial ring structure on $\Z[A]$.

\end{proposition}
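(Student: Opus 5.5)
The product is to be defined by the displayed convolution formula, and what has to be checked is threefold: the sum is finite for every $\m\in\Z^{\{A\}}$; the product $F_1F_2$ again lies in $\Z\{A\}$; and the ring axioms hold and extend the polynomial multiplication.

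\textbf{Finiteness.} Fix $\m\in\Z^{\{A\}}$ and set $N=\deg(\m)$. Take a decomposition $\n_1+\n_2=\m$ with $\n_1,\n_2\in\Z^{\{A\}}$. By Proposition \ref{prop:dominated} the leading terms of $\n_1,\n_2$ lie in $\Z_{\geq0}^{A_{\deg}}$. Suppose $\deg(\n_1)\neq\deg(\n_2)$. Then $\deg(\m)=\max\{\deg(\n_1),\deg(\n_2)\}$, so both $\n_1,\n_2$ lie in $\Z^{A_{\leq N}}$. Suppose instead $\deg(\n_1)=\deg(\n_2)=D$. Both leading parts are nonnegative and nonzero, so their sum is nonzero in degree $D$. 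This forces $D=N$ again.

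In either case $\exp(\n_1),\exp(\n_2)\in\mathscr{Y}(A_{\leq N})$. Hence only the restrictions $\mathrm{res}_N(F_1)$ and $\mathrm{res}_N(F_2)$ enter the sum. These are Laurent polynomials, i.e. finitely supported, so only finitely many pairs $(\n_1,\n_2)$ contribute. The same argument gives
\[
\mathrm{res}_N(F_1F_2)=\left(\mathrm{res}_N(F_1)\cdot\mathrm{res}_N(F_2)\right)\big|_{\mathscr{Y}(A_{\leq N})\cap\mathscr{Y}\{A\}},
\]
the product computed in the Laurent ring $\Z[A_{\leq N}^{\pm1}]$. Terms of that Laurent product lying outside $\mathscr{Y}\{A\}$ must be discarded. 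This discarding is the one point needing care, since $\mathrm{res}_N(F)$ is a priori only a function on $\mathscr{Y}(A_{\leq N})\cap\mathscr{Y}\{A\}$.

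\textbf{Closure.} It remains to see that $\mathrm{res}_N(F_1F_2)$ is a Laurent polynomial, which holds because it is finitely supported by the previous step.

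\textbf{Ring axioms.} For associativity, the triple convolution sums over $\n_1+\n_2+\n_3=\m$ with all $\n_i$ leading-term dominant. The degree argument applies verbatim: by Proposition \ref{prop:dominated}, partial sums of leading-term dominant elements remain leading-term dominant, so the intermediate sums $\n_1+\n_2$ again lie in $\Z^{\{A\}}$. Both bracketings therefore reduce to the same finite sum over all such triples, and associativity follows. Distributivity, commutativity and the unit $\exp(0)$ are immediate.

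\textbf{Extension of $\Z[A]$.} $\Z[A]$ sits inside $\Z\{A\}$ because $\mathscr{Y}_+(A)\subset\mathscr{Y}\{A\}$. For $F_1,F_2\in\Z[A]$, any decomposition $\n_1+\n_2=\m$ with $F_i(\exp(\n_i))\neq0$ has $\n_i\in\Z_{\geq0}^A$. So the convolution is exactly the polynomial product.

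The main subtle step is the finiteness, specifically the observation that equal degrees with nonnegative leading parts cannot cancel. That is precisely where leading-term dominance is used.
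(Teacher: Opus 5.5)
Your proof is correct and follows the paper's route. The paper's entire argument is your finiteness step: Proposition \ref{prop:dominated} gives $\deg(\n_1),\deg(\n_2)\leq\deg(\m)$, so only the finitely supported $\mathrm{res}_N(F_j)$ enter the sum. Your closure, associativity and extension checks fill in what the paper leaves implicit. One small remark: the ``discarding'' you flag is vacuous. Since $\Z^{\{A\}}$ is a submonoid, the Laurent product of two functions supported in $\mathscr{Y}\{A\}$ is already supported in $\mathscr{Y}\{A\}$.
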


\begin{proof}
For any $\n_1,\n_2, \m= \n_1+ \n_2\in \Z^{\{A\}}$ with $\deg(\m)=N$, Proposition \ref{prop:dominated} implies that $\deg(\n_1),\deg(\n_2)\leq N$. 

By the assumed finiteness conditions on $\mathrm{res}_N(F_j)$, $j=1,2$, we then see that, when $\m$ is fixed, the set
\[
\{(\n_1,\n_2)\::\: \n_1+\n_2= \m\;,\; F_1(\exp(\n_1))F_2(\exp(\n_2))\neq 0\}
\]
is finite.

\end{proof}

\subsubsection{Limit $q$-character map}

For a segment $[x,y]\in \seg$, we consider the (infinite) set of words
\[
S(x,y) = \left\{
(a_{j}, b_{j})_{j=1}^{k}\in \mathcal{W}(\Z\times \Z) \,\middle|\,
\begin{array}{l}
x = a_{k} < \cdots < a_1 \leq y+1, \\
y =  b_1 < \cdots < b_{k} 
\end{array}
\right\},
\]

and the map 
\begin{equation}\label{eq:theta}
\theta: S(x,y) \to \Z^{\seg}\;,\quad \theta((a_{j}, b_{j})_{j=1}^{k}) = [a_1,y] + \sum_{j=2}^{k} \left([a_j,b_j]-[a_{j-1}, b_{j}]\right)\;,
\end{equation}
where $[y+1,y]=0$ is assumed, if necessary.

We now recall the positively graded structure $\seg = \sqcup_{N\geq1}\seg_N$ on the set of segments, that is given by the length parameter of each segment. We consider the ring $\Z\{\seg\}$ with respect to that grading.

Note, that for any word $w=(a_{j}, b_{j})_{j=1}^{k}\in S(x,y)$, we have $\theta(w)\in \Z^{\{\seg\}}$, since 
\[
\theta(w)- [a_k,b_k]\in \Z^{\seg_{\leq b_k-a_k-1}}
\]
holds, when $k>1$.

We define the \textit{limit $q$-character map} on segments as
\[
\widetilde{\qch}:\seg\to \Z\{\seg\}\,,\quad \widetilde{\qch}([x,y]) = \sum_{w\in S(x,y)} \exp(\theta(w))\;,
\]
and extend it to a ring homomorphism
\[
\qch: \Z[\seg]\to \Z\{\seg\}
\]
which is determined on ring generators by $\qch(\exp(\Delta)):= \widetilde{\qch}(\Delta)$, for all $\Delta\in \seg$.

\subsubsection{Bitableaux explication}\label{sect:explic}

For an ordered multisegment $\underline{\m} = [x_1,y_1]\cdots[x_r,y_r]\in \mathcal{W}(\seg)$, we consider the set of bitableaux
\[
\mathcal{J}(\underline{\m}) = \{\mathcal{P}_1\cdots \mathcal{P}_r\in \tab(\Z\times \Z) \,:\, \mathcal{P}_i\in S(x_i,y_i),\,\forall\,1\leq i\leq r\}\;.
\]

We can then write $\theta:\mathcal{J}(\underline{\m})\to \Z^{\{\seg\}}$ as
\begin{equation}\label{eq:theta2}
\mathcal{P}_1\cdots \mathcal{P}_r\in \mathcal{J}(\underline{\m})\;\mapsto\; \theta(\mathcal{P}_1)+\ldots +\theta(\mathcal{P}_r)\in \Z^{\{\seg\}}\;,
\end{equation}
and obtain the following description.

\begin{proposition}\label{prop:chq-description}
Let $\underline{\m}=[x_1,y_1]\cdots[x_r,y_r]$ be any ordering of a given multisegment $\m \in \Z_{\geq0}^{\seg}$.

Then,

\[
\qch(\exp(\m)) = \sum_{\mathcal{P}\in \JJ} \exp(\theta(\mathcal{P})) = 
\]
\[
=\sum_{ \mathcal{P}=(a^i_j,b^i_j)\in\JJ} \left( 
\prod_{i=1}^r\exp( [a^i_1,y_i])\prod_{(i,j)\in \widehat{I}_{\mathcal{P}} } \exp([a_j^i, b^i_j]) \exp(-[a^i_{j-1},b_j]) \right)
\]
holds in $\Z\{\seg\}$.

\end{proposition}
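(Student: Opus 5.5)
Since $\qch$ is defined as the ring homomorphism determined on generators by $\widetilde{\qch}$, multiplicativity reduces the statement to expanding a product of infinite sums and checking that this expansion is legitimate in $\Z\{\seg\}$. Fix the ordering $\underline{\m}=[x_1,y_1]\cdots[x_r,y_r]$, so that $\exp(\m)=\prod_{i=1}^r\exp([x_i,y_i])$ in $\Z[\seg]$. This gives
\[
\qch(\exp(\m))=\prod_{i=1}^r \widetilde{\qch}([x_i,y_i])=\prod_{i=1}^r\sum_{\mathcal{P}_i\in S(x_i,y_i)}\exp(\theta(\mathcal{P}_i)).
\]
Because $\Z[\seg]$ is commutative, the result does not depend on the chosen ordering.

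The first point is to justify distributing the product over the sums. By the definition of the product on $\Z\{\seg\}$, the coefficient of $\exp(\m')$ on the left is a sum over decompositions $\m'=\n_1+\cdots+\n_r$ with each $\n_i\in\Z^{\{\seg\}}$. Each $\n_i$ is weighted by $\#\{\mathcal{P}_i\in S(x_i,y_i):\theta(\mathcal{P}_i)=\n_i\}$. Summing over decompositions, this equals the number of tuples $(\mathcal{P}_1,\ldots,\mathcal{P}_r)\in\prod_i S(x_i,y_i)$ with $\sum_i\theta(\mathcal{P}_i)=\m'$.

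Two finiteness facts are needed for this count to make sense. First, each fibre of $\theta$ on $S(x,y)$ is finite. Indeed, $\theta(w)$ has degree $b_k-a_k+1$ with leading part $[a_k,b_k]$, so fixing the degree bounds $b_k$. With $a_k=x$ fixed, this bounds every $a_j$ and $b_j$, leaving finitely many words $w$. Second, by Proposition \ref{prop:dominated} all the $\n_i$ have degree at most $\deg(\m')$, so only finitely many tuples contribute. This is exactly the finiteness argument used for the ring structure on $\Z\{\seg\}$, applied iteratively.

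Tuples $(\mathcal{P}_1,\ldots,\mathcal{P}_r)$ with $\mathcal{P}_i\in S(x_i,y_i)$ are by definition the elements of $\JJ$, and $\theta$ on $\JJ$ is additive by \eqref{eq:theta2}. This gives the first equality. For the second, substitute the definition \eqref{eq:theta} into $\exp(\theta(\mathcal{P}_i))$. Each index $(i,j)$ with $j\ge2$, that is, each element of $\widehat{I}_{\mathcal{P}}$, contributes $\exp([a^i_j,b^i_j])\exp(-[a^i_{j-1},b^i_j])$. In addition, each row $i$ contributes $\exp([a^i_1,y_i])$, interpreted as $1$ when $a^i_1=y_i+1$, using $b^i_1=y_i$. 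The only real obstacle is the finiteness bookkeeping just described; everything else is definitional.
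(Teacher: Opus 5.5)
Your proposal is correct and is essentially the argument the paper has in mind. The paper states this proposition without a separate proof, treating it as immediate from the multiplicativity of $\qch$, the additivity of $\theta$ over the rows of a bitableau in \eqref{eq:theta2}, and substitution of \eqref{eq:theta}; your finiteness bookkeeping (finite fibres of $\theta$ on $S(x,y)$ via the leading part $[a_k,b_k]$, and the degree bound from Proposition \ref{prop:dominated}) correctly justifies expanding the product of infinite sums in $\Z\{\seg\}$, a step the paper leaves implicit.
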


\subsubsection{Finite $q$-character maps}

 Given a subset $B\subset C$, we see a natural ring embedding
\[
\iota_{B,C}: \Z[B^{\pm1}]\to \Z[C^{\pm1}]\;,
\]
which preserves polynomials, that is, $\iota_{B,C}(\Z[B])< \Z[C]$. We also see the natural augmentation quotient map
\[
t_{C,B}: \Z[C^{\pm1}]\to \Z[B^{\pm1}]\,
\]
that is given as 
\[
t_{C,B}(\exp(\pm a)) = \left\{  \begin{array}{ll}  \exp(\pm a) &  a\in B \\ 1 & a\in C\setminus B \end{array} \right.
\]
on the generators $\{\exp(a),\exp(-a)\}_{a\in C}$ of $\Z[C^{\pm1}]$.

For each $N\geq1$, we consider the maps given by the formula 
\[
p_N:= t_{\seg_{\leq N+1}, \seg_N}\circ \mathrm{res}_{N+1}\;.
\]
We may treat it on two levels, as ring homomorphisms
\[
p_N: \Z[\seg]\to \Z[\seg_{\leq N}]\;,\quad p_N: \Z\{\seg\}\to \Z[\seg_{\leq N}^{\pm1}]\;,
\]
both of which are evidently surjective. On the polynomial ring level, we also take the embedding
\[
\iota_N:= \iota_{\seg_{\leq N},\seg}: \Z[\seg_{\leq N}]\hookrightarrow \Z[\seg]\;,
\]
which makes $p_N$ into a subspace projection operator (i.e. $p_N\circ \iota_N= \mathrm{id}$).

In these terms we define the \textit{$N$-finite $q$-character map} as the ring homomorphism
\[
\qchN:= p_N\circ\qch\circ\iota_N:\Z[\seg_{\leq N}]\to \Z[\seg_{\leq N}^{\pm1}]
\]
given by the composition
\[
\Z[\seg_{\leq N}]\xrightarrow{\iota_N} \Z[\seg]\xrightarrow{\qch} \Z\{\seg\}\xrightarrow{\mathrm{res}_{N+1}} \Z[\seg_{\leq N+1}^{\pm1}] \xrightarrow{t_{\seg_{\leq N+1}, \seg_N}} \Z[\seg_{\leq N}^{\pm1}]\;.
\]

\begin{lemma}\label{lem:commutepN}
    As homomorphisms $\Z[\seg]\to \Z[\seg_{\leq N}^{\pm1}]$, we have $\qchN\circ p_N = p_N\circ \qch$.
\end{lemma}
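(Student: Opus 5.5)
Since all maps involved are ring homomorphisms ($\qch$ by construction, $p_N$ on both levels, and $\qchN$ as a composition of ring homomorphisms), it suffices to check the identity $\qchN(p_N(\exp(\Delta))) = p_N(\qch(\exp(\Delta)))$ on the polynomial generators $\exp(\Delta)$, $\Delta=[x,y]\in\seg$. I would split into three cases by the length $\ell=y-x+1$ of $\Delta$: $\ell\leq N$, $\ell=N+1$, and $\ell\geq N+2$.

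In the case $\ell\leq N$, we have $p_N(\exp(\Delta))=\exp(\Delta)$, and $\iota_N$ is an inclusion. Hence $\qchN(\exp(\Delta)) = p_N(\qch(\exp(\Delta)))$ directly from the definition $\qchN=p_N\circ\qch\circ\iota_N$, because $\iota_N(\exp(\Delta))=\exp(\Delta)$.

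In the remaining cases $p_N(\exp(\Delta))$ equals $1$ (for $\ell=N+1$) or $0$ (for $\ell\geq N+2$). Since $\qchN(1)=1$ and $\qchN(0)=0$, I must show that $p_N(\widetilde{\qch}(\Delta))$ equals $1$, respectively $0$. Here I would use the explicit sum over $S(x,y)$. Each word $w=(a_j,b_j)_{j=1}^k$ has $\theta(w)$ with leading term $[a_k,b_k]=[x,b_k]$, of length $b_k-x+1\geq \ell$. The other summands have strictly smaller lengths; the proof of leading-term dominance shows they lie in $\seg_{\leq b_k-a_k-1}$.

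Applying $\mathrm{res}_{N+1}$ keeps only words with $\deg\theta(w)\leq N+1$. I must verify that the restriction map on $\Z\{\seg\}$ kills any monomial involving a segment of length $>N+1$; this is how $\mathrm{res}$ restricts functions to $\mathscr{Y}(\seg_{\leq N+1})$. When $\ell\geq N+2$, every word has a leading segment of length $\geq N+2$ with positive exponent, so $\mathrm{res}_{N+1}$ gives $0$, and hence $p_N$ gives $0$. When $\ell=N+1$, only words with $b_k=y$ survive. Since the $b_j$ are strictly increasing with $b_1=y$, this forces $k=1$, so $w=(a_1,y)$ with $x=a_1$ and $\theta(w)=[x,y]$. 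Thus $\mathrm{res}_{N+1}(\widetilde{\qch}(\Delta))=\exp(\Delta)$, and $t_{\seg_{\leq N+1},\seg_{\leq N}}$ sends it to $1$, as required.

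The main obstacle is the bookkeeping that $\mathrm{res}_{N+1}$ annihilates exactly the terms whose leading segment has length $>N+1$, together with the multiplicativity of $\mathrm{res}_{N+1}$ and $t$ on $\Z\{\seg\}$. Multiplicativity of $\mathrm{res}_{N+1}$ should follow from Proposition \ref{prop:dominated}: in a product, a monomial has degree $\leq N+1$ only if both factors do. I would state this compatibility as a short preliminary remark before doing the generator check. I also note the paper's notation $t_{\seg_{\leq N+1},\seg_N}$ is presumably meant as $\seg_{\leq N}$.
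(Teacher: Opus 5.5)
Your proposal is correct and follows the paper's proof essentially step for step. You reduce to the generators $\exp([x,y])$ by multiplicativity and split by length into $\leq N$, $N+1$ and $\geq N+2$. In the last two cases, the leading segment $[x,b_k]$ of each $\theta(w)$ either forces vanishing under $\mathrm{res}_{N+1}$ or leaves only the one-letter word $w_0=(x,y)$.

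Your remark that $\mathrm{res}_{N+1}$ is multiplicative on $\Z\{\seg\}$, via Proposition \ref{prop:dominated}, fills in a point the paper leaves implicit. One small correction: the non-leading terms of $\theta(w)$ only lie in $\seg_{\leq b_k-a_k}$, not $\seg_{\leq b_k-a_k-1}$ (e.g. $[a_{k-1},b_k]$). This does not affect your argument, since you only use that they are strictly shorter than $[a_k,b_k]$.
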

\begin{proof}
We need to verify the equality $p_N\circ\qch\circ\iota_N\circ p_N= p_N\circ \qch$. Since all maps involved are ring homomorphisms, it suffice to check the identity on generators $\exp([x,y])\in \Z[\seg] $.

When $[x,y]\in \seg_{\leq N}$, $\iota_N\circ p_N(\exp([x,y])) = \exp([x,y])$.

Suppose now that $y-x> N$, that is, $p_N(\exp([x,y]))=0$. We note that for each $w = (a_j,b_j)_{j=1}^k\in S(x,y)$, we have $b_k-a_k >N$ and thus, $p_N(\exp(\theta(w)))=0$. Hence, $p_N(\qch(\exp([x,y])))=0$ and $\exp([x,y])$ belongs to the kernels of both sides of the desired identity.

Finally, in the case of $y-x=N$, we have $p_N(\exp([x,y]))=1$. Through a similar argument as in the previous case, we see that a unique $w_0 = (x,y)\in S(x,y)$ with $p_N(\exp(\theta(w_0))\neq0$. Indeed, $\theta(w_0) = [x,y]$ and $p_N(\qch(\exp([x,y]))) = p_N (\exp([x,y]))=1$.

\end{proof}

\subsection{Representation theory of quantum affine algebras}

For a transcendental parameter $v\in \C^\times$ and integer $N\geq 1$, we consider the Drinfeld--Jimbo quantum affine algebra 
\[
U_{N+1} = U_{v}\!\left(\widehat{\mathfrak{sl}_{N+1}}\right)\;.
\]
This is the complex associative algebra that is generated by a Laurent polynomial algebra $A_{N+1}=\mathbb{C}[k_0^{\pm 1},k_1^{\pm},\ldots, k_{N}^{\pm 1}]$ together with additional generators $\{x^+_i, x^-_i\}_{i=0}^N$, subject to the relations

\[
\begin{array}{ll} x^{\pm}_i x^{\pm}_j = x^{\pm}_j x^{\pm}_i , & |i-j|\neq 1\;(\mathrm{mod}\; N) \\ 
x^{+}_i x^{-}_j = x^{-}_j x^{+}_i , & i\neq j \\
\left(x^{\pm}_i\right)^2 x^{\pm}_j - (v+v^{-1})x^{\pm}_ix^{\pm}_jx^{\pm}_i + x^{\pm}_j\left( x^{\pm}_i\right)^2=0\;, & |i-j|=1 \;(\mathrm{mod}\; N) \\
x^{+}_i x^{-}_i - x^{-}_i x^{+}_i = \frac{1}{v - v^{-1}}(k_i-k_i^{-1}) \;,\\
k_jx^{\pm}_i = x^{\pm}_ik_j , & |i-j|\neq 0,1\;(\mathrm{mod}\; N) \\ k_ix^{\pm}_i k_i^{-1} = v^{\pm 2}x^{\pm}_i , &  \\ k_jx^{\pm}_i k_j^{-1} = v^{\mp 1}x^{\pm}_i& |i-j|=1 \;(\mathrm{mod}\; N)\end{array} \;.
\]



A finite-dimensional module over $U_{N+1}$ is said to be of \textit{type $1$}, when the sub-algebra $A_{N+1}$ acts on it semisimply with all $k_i$-eigenvalues being integer powers of $v$, for $i=0,\ldots N$.

The standard framework for the representation theory of quantum affine algebras is the category $\mathcal{C}_N$ of finite-dimensional type $1$ $U_{N+1}$-modules. It is known (\cite[Proposition 2.6]{Chari-Pressley}) that every such module factors through the well-studied level $0$ quotient of $U_{N+1}$, commonly referred to as the \textit{quantum loop algebra}. Moreover, up to a finite group of algebra automorphisms, the class of type $1$ modules accounts for all finite-dimensional modules (see \cite[Proposition 12.2.3]{cp-book}).

The quantum affine algebra $U_{N+1}$ carries a natural Hopf algebra structure, which endows $\mathcal{C}_N$ with the structure of a monoidal category, with this tensor product denoted by us as $\otimes$.

In particular, the $1$-dimensional module that we denote as $\mathbf{1}\in \mathcal{C}_N$, on which $x_i^{\pm}$'s act by $0$ and $k_i$'s act by $1$, is the unit object in the category.

Let $\Irr[N]$ denote the set of isomorphism classes of simple modules in $\mathcal{C}_N$, and set
\[
G[N]=\Z^{\Irr[N]}
\]
for the Grothendieck group of $\mathcal{C}_N$. 

\subsubsection{Grothendieck rings}

The monoidal structure induces on $G[N]$ the structure of an associative ring.


We consider the formal set
\[
\mathscr{D}_N = \left\{\widetilde{Y}(i,a)\,:\, i\in \{1,\ldots, N\}, a\in \mathbb{C}^\times\right\}\;,
\]
of parameters, and write $Y(i,a):=\exp\left(\widetilde{Y}(i,a)\right)\in \Z[\mathscr{D}_N]$ for the generators of the resulting polynomial ring.

Each $\widetilde{Y}\in \mathscr{D}_N$ gives rise to a \textit{fundamental module} $L\left(\widetilde{Y}\right) = L(\exp(\widetilde{Y}))\in \Irr[N]$, as defined in \cite[Section 1.3]{fm2001} or \cite[Theorem 5.17]{kashiwara-duke}\footnote{More precisely, fundamental modules were defined as $L(Y(i,1)) $ in our notation, while $L(Y(i,a)):= \tau_{a}^\ast(L(Y(i,1)))$ maybe taken as the general definition, with $\tau_a^\ast$ being the twist as used in Section \ref{sect:qa-integral}.}, for example.

When writing $S(\widetilde{Y}) = [L(\exp(\widetilde{Y}))]\in G[N]$, the map $S: \mathscr{D}_N\to G[N]$ can now be extended into a ring homomorphism
\[
S: \Z[\mathscr{D}_N ]\to G[N]\;,
\]
that is determined by $S(Y(i,a))= [L(Y(i,a))]$.

\begin{proposition}\label{prop:poly-ring}
\cite{MR1745260}\cite[Theorem 2.2(4)]{fm2001}

The map $S$ is a ring isomorphism.

In particular, the ring $G[N]$ is commutative, and it is a polynomial ring in the set of variables given by the classes of fundamental modules.
\end{proposition}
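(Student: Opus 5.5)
This is a cited result (Frenkel--Reshetikhin, Frenkel--Mukhin), so the plan reconstructs the standard argument through the injective $q$-character homomorphism rather than proving anything new.

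The route is to pass through the $q$-character map $\chi^N: G[N]\to \Z[\mathscr{D}_N^{\pm1}]$ and use two of its properties: it is an injective ring homomorphism, and it has a highest-weight structure. The first step records that $\chi^N$ is multiplicative, because the commutative subalgebra defining the generalized eigenspaces is compatible with the coproduct up to triangular corrections. Hence $\chi^N(V\otimes W)=\chi^N(V)\chi^N(W)$, and in particular $G[N]$ is commutative. Injectivity follows from the fact that $\chi^N$ refines the ordinary $\mathfrak{sl}_{N+1}$-character in a way that separates simple modules.

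The second step uses the Drinfeld polynomial classification (Chari--Pressley). Simple type $1$ modules $L(\m)$ are in bijection with dominant monomials $\m\in\mathscr{Y}_+(\mathscr{D}_N)$. Moreover $\chi^N(L(\m))=\m+\sum c_{\m'}\m'$, where every other $\m'$ satisfies $\m'<\m$ in the Nakajima order, meaning $\m'\m^{-1}$ is a product of inverses of the affine simple roots $A_{i,a}$.

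The third step is unitriangularity. The tensor product $\prod S(Y(i_k,a_k))$ has $q$-character equal to $\prod Y(i_k,a_k)$ plus strictly lower terms, so
\[
S(\m)=[L(\m)]+\sum_{\m'<\m}d_{\m,\m'}[L(\m')].
\]
The set of dominant $\m'<\m$ of fixed weight is finite, since the $\mathfrak{sl}_{N+1}$-weight decreases by positive roots. Therefore $S$ is unitriangular with respect to the bases $\{\m\}$ and $\{[L(\m)]\}$, and is bijective by downward induction on the partial order. Being multiplicative by construction, $S$ is then a ring isomorphism.

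The main obstacle is the highest-weight property of $\chi^N(L(\m))$, namely that all non-highest monomials are lower in the $A_{i,a}^{-1}$ order. I would obtain it from the $U_v(\widehat{\mathfrak{sl}_2})$ restriction argument of Frenkel--Reshetikhin, as in the cited references, and cite it rather than reprove it.
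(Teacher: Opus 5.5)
Your outline is the standard Frenkel--Reshetikhin argument that the paper relies on through its citations; the paper itself gives no proof. Its skeleton is correct: a multiplicative $q$-character, highest-weight triangularity of $\chi^N(L(\m))$, and a unitriangular passage from products of fundamental classes to simple classes. One step is misjustified, however. Injectivity of $\chi^N$ does \emph{not} follow from $\chi^N$ ``refining the ordinary character'', because the ordinary $\mathfrak{sl}_{N+1}$-character does not separate simple modules: all $L(Y(1,a))$, $a\in\C^\times$, have the same one. Injectivity has to be deduced from your second step. If $\sum_{\m} c_{\m}\,\chi^N(L(\m))=0$ with not all $c_{\m}=0$, take $\m$ maximal with $c_{\m}\neq0$; the coefficient of the monomial $\m$ in the sum is then $c_{\m}$, a contradiction. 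This is not cosmetic. Injectivity is what gives commutativity of $G[N]$, and commutativity is needed before $S$ is even well defined as a ring homomorphism out of $\Z[\mathscr{D}_N]$. So the logical order should be: highest-weight property, then injectivity and commutativity, then unitriangularity.

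The input you single out as the main obstacle is much more than this proposition needs. That input is the Frenkel--Mukhin statement that every other monomial of $\chi^N(L(\m))$ is $\m$ times a product of $A_{i,a}^{-1}$'s. It suffices to know the following: the weight space of weight $\mathrm{wt}(\m)$ in $L(\m)$, and in $\bigotimes_k L(Y(i_k,a_k))$, is one-dimensional and carries the $\ell$-weight $\m$, and all other weights are strictly lower in the dominance order. Then both injectivity and unitriangularity hold for the partial order ``$\m'=\m$ or $\mathrm{wt}(\m')<\mathrm{wt}(\m)$''. Since only finitely many dominant weights lie below a given one, descending chains terminate, and that is all the downward induction uses. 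This is the route of the original Frenkel--Reshetikhin corollary.

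Your finiteness remark is also loose. The drop in weight bounds the number of factors $A_{i,a}^{-1}$ but not their spectral parameters, so it does not by itself show that $\{\m'<\m\}$ is finite. What you actually need is termination of descending chains, and the weight argument does give that.
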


\subsubsection{Integral decomposition}\label{sect:qa-integral}
For $\eta\in \C^\times$, let us consider the parameter subset
\[
\mathscr{D}_{N,\eta} = 
\left\{\widetilde{Y}(i,a)\in \mathscr{D}_N\::\: a = \eta v^p,\; p+i+1\in 2\Z\right\}\;.
\]
We write $\alpha_\eta:\mathscr{D}_N\to \mathscr{D}_N$ for bijective function $\alpha_\eta(\widetilde{Y}(i,a)) = \widetilde{Y}(i,\eta a)$, so that $\alpha_\eta(\mathscr{D}_{N,1})= \mathscr{D}_{N,\eta}$. Those map can also be extended naturally to ring automorphisms of $\Z[\mathscr{D}_N]$.

We denote the subring $G_{\eta}[N]:= S(\Z[\mathscr{D}_{N,\eta} ])<G[N]$, which may be viewed as the Grothendieck group of a Serre subcategory $\mathcal{C}_{N,\eta}$ in $\mathcal{C}_N$. We also denote $\Irr_{\eta}[N] \subset \Irr[N]$ to be the corresponding subset of simple modules in $\mathcal{C}_{N,\eta}$, that is, $G_\eta[N] = \Z^{\Irr_\eta[N]}$.

In similarity with \eqref{eq:decomp-ring}, we have a natural decomposition
\begin{equation}\label{eq:qa-tensor}
\Z\left[\mathscr{D}_N\right] = \bigotimes_{\eta\in \C^\times/\langle v^2\rangle} \Z[\mathscr{D}_{N,\eta}]
\end{equation}
of polynomial rings, as well as an analogous decomposition of rings of Laurent polynomials.

It is known (\cite[Theorem 1.3(4)]{fm2001}) that a family of algebra automorphisms $\{\tau_{\eta}\}_{\eta\in \C^\times}$ exists for $U_{N+1}$, pulling modules through which satisfies
\begin{equation}\label{eq:intertwUeta}
\tau_{\eta}^\ast\circ S = S\circ \alpha_\eta\;.
\end{equation}
Thus, $\tau_\eta^\ast: \mathcal{C}_{N,1}\xrightarrow{\sim} \mathcal{C}_{N,\eta}$ serves as a monoidal equivalence of categories.

\subsubsection{q-characters}
An invariant of high interest in the study of this theory is the $q$-character. For a module $V\in \mathcal{C}_N$, its $q$-character is presented as a Laurent polynomial
\[
\chi^N(V)\in \Z[\mathscr{D}_N^{\pm1}]\;,
\]
which is well-defined \cite[Section 2]{fm2001} on the semisimplification $[V]\in G[N]$, i.e. $\chi^N(V) = \chi^N([V])$. As such, it provides a ring homomorphism
\[
\chi^N: G[N]\to \Z[\mathscr{D}_N^{\pm1}]\;.
\]

It is known (see, for example, explicit formulas below) that for a fundamental module $L=L\left(\widetilde{Y}\right)\in \mathcal{C}_N$ with $\widetilde{Y}\in \mathscr{D}_{N,\eta}$, we have $\chi^N(L)\in \Z[\mathscr{D}_{N,\eta}^{\pm1}]$. Since those are the ring generator, we see that the $q$-character map restricts to homomorphisms
\[
\chi^N: G_{\eta}[N]\to \Z[\mathscr{D}_{N,\eta}^{\pm1} ]\;,
\]
for all $\eta\in \C^\times$.

Moreover, it is known (\cite[Proposition 4.7]{henandez-simple}) that
\begin{equation}\label{eq:qch-twist}
\alpha_{\eta}\circ \chi^N = \chi^N\circ \tau_\eta^\ast
\end{equation}
holds, for all $\eta\in \C^\times$. Thus, $\chi^N$ is determined by its values on $G_1[N]$.

\subsubsection{Identification with truncated $q$-characters}\label{sect:phimap}

There is a bijection between the sets $\seg_{\leq N}$ and $\mathscr{D}_{N,1}$ that is given by
    \[
                [x,y] =\left[\frac{p-l+1}{2},\frac{p+l-1}{2}\right]\in \seg_{\leq N}  \quad \longleftrightarrow\quad \widetilde{Y}(y-x+1,v^{x+y})= \widetilde{Y}(l,v^p)\in \mathscr{D}_{N,1} \;.
        \]

The bijection leads to an identification of polynomial rings $\phi:\Z[\mathscr{D}_{N,v}^{\pm1}]\xrightarrow{\sim} \Z[\seg_{\leq N}^{\pm1}]$, that is given by $\phi(Y(l,v^p))  = \exp([x,y])$ on generators.

The following theorem is an interpretation of the explicit formula for $q$-character that was given by Nakajima in \cite{naka-formula}.

\begin{theorem}\label{thm:qchar-ident}
    Under the above identification of parameter sets, we have an equality $\chi^N \circ S = \phi^{-1}\circ\qchN \circ\phi $ of ring homomorphisms, i.e. the following diagram commutes.
\[
\begin{tikzcd}[column sep=large,row sep=0.6cm]
\Z[\seg_{\leq N}] \arrow[r, "\qchN"] \arrow[d, "\phi^{-1}"] & \Z[\seg_{\leq N}^{\pm1}] \arrow[dd, "\phi^{-1}"] \\
\Z[\mathscr{D}_{N,1}]       \arrow[d, "S"]                     &                   \\
G_1[N] \arrow[r, "\chi^N"]                   & \Z[\mathscr{D}_{N,1}^{\pm1}]           
\end{tikzcd}
\]
\end{theorem}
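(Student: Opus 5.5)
Since $\chi^N$, $S$, $\phi$ and $\qchN$ are all ring homomorphisms, and $\Z[\seg_{\leq N}]$ is generated by the $\exp([x,y])$ with $[x,y]\in\seg_{\leq N}$, it suffices to check the diagram on each such generator. Via $\phi$ these generators correspond to the fundamental modules $L(Y(l,v^p))$ with $l=y-x+1\leq N$ and $p=x+y$. The claim then reduces to one statement: Nakajima's formula for the $q$-character of a fundamental $U_v(\widehat{\mathfrak{sl}_{N+1}})$-module, transported by $\phi$, equals $p_N(\widetilde{\qch}([x,y]))$.

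First we recall the type $A$ fundamental $q$-character. The module $L(Y(l,a))$ has $q$-character given by a sum over column-strict tableaux of a single column of height $l$ with entries in $\{1,\ldots,N+1\}$, equivalently over increasing sequences $1\le c_1<\dots<c_l\le N+1$. Each box with entry $c$ in row $r$ contributes a factor $\boxed{c}_{a v^{\ldots}}=Y(c,\cdot)Y(c-1,\cdot)^{-1}$, with the convention $Y(0,\cdot)=Y(N+1,\cdot)=1$. Equivalently, we may use Nakajima's path/lattice description. After telescoping, each monomial becomes a product of $Y$'s whose first index is a difference of consecutive boundary points.

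Next we match this with $S(x,y)$ and $\theta$. An element $(a_j,b_j)_{j=1}^k\in S(x,y)$ records the "corners" where a path bends. Under $\phi$ the monomial $\exp([a_j,b_j])\exp(-[a_{j-1},b_j])$ corresponds to $Y(b_j-a_j+1,v^{a_j+b_j})\,Y(b_j-a_{j-1}+1,v^{a_{j-1}+b_j})^{-1}$. I would build an explicit bijection between column tableaux (grouped into maximal runs of consecutive entries) and those words whose segments have length at most $N+1$. I would then check that $\theta$ reproduces the telescoped monomial. The truncation $p_N$ handles the boundary: $\mathrm{res}_{N+1}$ discards words containing a segment longer than $N+1$, and $t_{\seg_{\leq N+1},\seg_N}$ sets $\exp(\text{length } N+1)=1$, mirroring $Y(N+1,\cdot)=1$. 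The convention $[y+1,y]=0$ mirrors $Y(0,\cdot)=1$. A neat way to organise this is induction on $l$ using the known recursion for fundamental $q$-characters, or alternatively direct comparison with the $T$-system.

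The main obstacle will be the bookkeeping of spectral shifts and the exact correspondence between corner data $(a_j,b_j)$ and tableau entries: the parity convention in $\mathscr{D}_{N,1}$ and the shifts $v^{x+y}$ must match the $v^{\pm1}$ shifts in Nakajima's box variables. I would first verify the case $l=1$ (where $S(x,x)$ yields $\sum_{b\ge x}\exp([x+1,b])^{-1}\exp([x,b])$ style chains, matching $\sum_c \boxed{c}$) and the case $N=1$, and then fix the general dictionary. Finally, the theorem is stated for $\eta=1$ only, so equation \eqref{eq:qch-twist} is not needed here.
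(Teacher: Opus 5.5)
Your proposal is correct and follows essentially the paper's route: reduce to the generators $Y(l,v^p)$, transport Nakajima's fundamental $q$-character formula (a sum over $1\le c_1<\dots<c_l\le N+1$) through $\phi$, and telescope. Under the dictionary $b_m=y-m+c_m$, your runs of consecutive entries $c_{m+1}=c_m+1$ are exactly the coincidences $b_m=b_{m+1}$ whose cancellations the paper uses, and these yield the words of $S(x,y)$ with $\deg\theta(w)\le N+1$ (the case $b_1>y$ corresponds to $a_1=y+1$), with the boundary conventions $Y(0,\cdot)=Y(N+1,\cdot)=1$ absorbed by $[y+1,y]=0$ and by $t_{\seg_{\leq N+1},\seg_N}\circ\mathrm{res}_{N+1}$ as you describe.
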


\begin{proof}
It is enough to verify the equality on the ring generators $Y(l,v^p)\in \Z[\mathscr{D}_{N,1}]$. In other words, we need to show that $\phi(\chi^N(L(Y(l,v^p))))=\qchN(\exp([x,y]))$, when $(x,y) = (\frac12(p-l+1),\frac12(p+l+1))$.

Let us first recall the explicit formula for $q$-characters of fundamental modules that was given in \cite[Proposition 4.6]{naka-formula}:
\[
      \chi^N(L(Y(l,v^p)))=
      \]
      \[
 =     \sum_{1\leqslant c_1<\ldots<c_l\leqslant N+1}  \prod_{m=1}^l Y(c_m,v^{p+l+c_m-2m})Y(c_m-1, v^{p+l+c_m-2m+1})^{-1}\in \Z[\mathscr{D}_{N,1}^{\pm1} ]\;,
\]
where $Y(0,a)= Y(N+1,a)=1$ is assumed.

The last formula may be translated into
\[
\phi(\chi^N(L(Y(l,v^p)))) = t_{\seg_{\leq N+1}, \seg_N}\left( \sum_{y\leq b_1\leq \ldots\leq b_l \leq x+ N} \prod_{m=1}^{l} \exp\left(   [y-m+1,b_j]-[y-m+2, b_{j}] \right)\right)\;,
\]
in $\Z[\seg^{\pm 1}_{\leq N}]$, with $[y+1,y] = 0$ being assumed.

When accounting for cancellations that occur in the last formula, in cases of $b_m = b_{m+1}$, we obtain
\[
\phi(\chi^N(L(Y(l,v^p)))) =  t_{\seg_{\leq N+1}, \seg_N}\left(\sum_{w\in S(x,y)\::\: \deg(\theta(w))\leq N+1} \exp(\theta(w)) \right) =
\]
\[
= t_{\seg_{\leq N+1}, \seg_N} \circ \mathrm{res}_{N+1}\circ \qch( \exp[x,y])\;.
\]

\end{proof}

\subsection{Dominant monomials}

We would like to explore the \textit{dominant} part of the limit $q$-character, that is, the map given by
\[
\qchp:\Z[\seg] \to \Z\{\Seg\}\,,\quad \qchp(F): = \qch(F)_+\;,
\]
which is not a ring homomorphism.

\begin{lemma}\label{lem:coheren-plus}
    The map $\qchNp:\Z[\seg_{\leq N}]\to \Z[\seg_{\leq N}^{\pm1}]$ that is defined by the composition
\[
\Z[\seg_{\leq N}]\xrightarrow{\iota_N} \Z[\seg]\xrightarrow{\qchp} \Z\{\seg\}\xrightarrow{\mathrm{res}_{N+1}} \Z[\seg_{\leq N+1}^{\pm1}] \xrightarrow{t_{\seg_{\leq N+1}, \seg_N}} \Z[\seg_{\leq N}^{\pm1}]
\]
satisfies $\qchNp(F) =\qchN(F)_+$, for all $F\in\Z[\seg_{\leq N}]$.

In particular, its image lies in the polynomial ring $\Z[\seg_{\leq N}]$.

As additive maps $\Z[\seg]\to \Z[\seg_{\leq N}]$, we have $\qchNp\circ p_N= p_N \circ \qchp$.
\end{lemma}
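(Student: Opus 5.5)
The plan is to unwind the definitions and to track how dominance behaves under each map in the composition. Write $F\in\Z[\seg_{\leq N}]$ and $G:=\qch(\iota_N F)\in \Z\{\seg\}$, so that $\qchNp(F) = t_{\seg_{\leq N+1},\seg_N}(\mathrm{res}_{N+1}(G_+))$ and $\qchN(F)_+ = \bigl(t_{\seg_{\leq N+1},\seg_N}(\mathrm{res}_{N+1}(G))\bigr)_+$. By additivity it suffices to compare the two sides monomial by monomial.

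First, restriction commutes with truncation: $\mathrm{res}_{N+1}(G_+) = \mathrm{res}_{N+1}(G)_+$. This is immediate, because a monomial $\exp(\m)$ with $\m\in \Z^{\seg_{\leq N+1}}$ is dominant in $\Z\{\seg\}$ exactly when it is dominant in $\Z[\seg_{\leq N+1}^{\pm1}]$.

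The real content is the compatibility of the augmentation $t:=t_{\seg_{\leq N+1},\seg_N}$ with truncation. The map $t$ sends $\exp(\m)$ to $\exp(\m')$, where $\m'$ is obtained from $\m$ by deleting the coordinates in $\seg_{N+1}$. Since deleting coordinates preserves positivity, dominant monomials go to dominant monomials. For the converse, we must show that no non-dominant monomial of $\mathrm{res}_{N+1}(G)$ becomes dominant after applying $t$. Such a monomial has a negative coefficient only on some $\seg_{N+1}$ coordinates and is nonnegative on $\seg_{\leq N}$.

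To rule these out, use the bitableau description of Proposition \ref{prop:chq-description}. The segments of $\m$ all lie in $\seg_{\leq N}$. For a bitableau $\mathcal{P}$, a negative term $-[a^i_{j-1},b^i_j]$ appears, and it is accompanied by $+[a^i_j,b^i_j]$, which is strictly longer. So if the negative term has length $N+1$, the positive partner has length $\geq N+2$. Monomials of degree $\geq N+2$ are killed by $\mathrm{res}_{N+1}$; these are the leading-term dominant monomials of degree $>N+1$, whose support meets $\seg_{\geq N+2}$ positively.

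More carefully, I expect to argue that within each row the lengths of the segments grow with $j$, via the chain $[a_j,b_j]$ with $a_j$ decreasing and $b_j$ increasing. Hence, if some term of length $N+1$ occurs negatively, the last positive term $[a^i_k,b^i_k]$ of that row has length $>N+1$. That term can cancel only against negative terms of the same or greater length, and those come from other rows with even longer positive terms. Taking the longest segment overall, as in the leading-term dominance of Proposition \ref{prop:dominated}, gives $\deg(\theta(\mathcal{P}))\geq N+2$, so the monomial vanishes under $\mathrm{res}_{N+1}$.

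This degree argument is the main obstacle, because cancellations between rows must be controlled. Leading-term dominance handles the top degree cleanly, since the leading part is a sum of positive terms and negative terms of strictly smaller length. The fact that $\theta(\mathcal{P})\in\Z^{\{\seg\}}$ therefore gives exactly what is needed: if $\deg\leq N+1$, then every negative coefficient in $\seg_{N+1}$ would have to sit in the leading part, which is nonnegative. So any surviving monomial is nonnegative on $\seg_{N+1}$, and it is dominant before $t$ if and only if it is dominant after. This proves $\qchNp(F) = \qchN(F)_+$, and the image lies in $\Z[\seg_{\leq N}]$ as a consequence.

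Finally, the identity $\qchNp\circ p_N = p_N\circ\qchp$ follows by the same reasoning as Lemma \ref{lem:commutepN}. On a monomial of $\Z[\seg]$ containing a segment of length $>N+1$, both sides vanish, because every bitableau term then has degree $\geq N+2$. Monomials involving length-$(N+1)$ segments contribute to $p_N\circ\qch$ only through the leading tableau, where $p_N$ sends them to $1$. Combining this with the truncation compatibility just established, applied through Lemma \ref{lem:commutepN}, gives the identity.
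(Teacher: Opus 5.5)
Your proposal is correct and, once the exploratory row-by-row cancellation discussion is set aside, it is the paper's argument. Every monomial occurring in an element of $\Z\{\seg\}$ is leading-term dominant; this is built into the definition of $\Z\{\seg\}$, and for $\theta(\mathcal{P})$ it also follows from Proposition \ref{prop:dominated}, so you did not need the bitableau analysis. Consequently, after $\mathrm{res}_{N+1}$ the $\seg_{N+1}$-part of each surviving monomial is nonnegative, and the augmentation map commutes with taking dominant parts. The final identity then follows from Lemma \ref{lem:commutepN} through the chain $\qchNp(p_N(F))=\qchN(p_N(F))_+=p_N(\qch(F))_+=p_N(\qchp(F))$.
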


\begin{proof}
For any $F\in\Z[\seg]$, we have $\mathrm{res}_{N+1}(\qch(F))\in \Z^{\mathscr{Y}\{\seg\}\cap \mathscr{Y}(\seg_{\leq N+1})}$. Since only leading-term dominant monomials are involved, we have 
\[
t_{\seg_{\leq N+1}, \seg_N}( \mathrm{res}_{N+1}(\qch(F)))_+=t_{\seg_{\leq N+1}, \seg_N}( \mathrm{res}_{N+1}(\qch(F))_+)\;.
\]
The first statement now follows, since the restriction map $\res_{N+1}$ clearly commutes with the operation of taking the dominant part.

For the last statement, we see that
\[
\qchNp(p_N(F)) = \qchN(p_N(F))_+ = p_N(\qch(F))_+ = p_N(\qchp(F))
\]
holds, where the middle equality is given by Lemma \ref{lem:commutepN}.
\end{proof}

Indeed, for a module $V\in \mathcal{C}_N$, the monomials of $\mathscr{Y}_+(\mathscr{D}_N)$ that appear in the $q$-character $\chi^N(V)$ are known as the \textit{dominant weights} of $V$ and are a source of interest. 

In light of Lemma \ref{lem:coheren-plus}, Proposition \ref{prop:poly-ring} and Theorem \ref{thm:qchar-ident}, when viewing the module $V$ as an element $F:=\phi(S^{-1}([V]))\in \Z[\seg_{\leq N}]$, the count of the dominant weights spaces for $V$ is reduced to study of the polynomial $\qchNp(F)$, which in turn, is a truncated version of $\qchp(F)$.

\subsubsection{Dominant bitableaux}

For an ordered multisegment $\underline{\m}\in \mathcal{W}(\seg)$, we consider the subset of bitableaux
\[
\JJ_+:= \left\{\mathcal{P}\in \JJ\::\: \theta(\mathcal{P})\in \Z_{\geq0}^\seg\right\}\;.
\]
Thus, for any ordering $\underline{\m}$ of a multisegment $\m\in \Z_{\geq0}^{\seg}$, we have
\[
\qchp(\exp(\m))=  \sum_{\mathcal{P}\in \JJ_+} \exp(\theta(\mathcal{P}))\;.
\]

We would like to develop a combinatorial condition for detection of the subset $\JJ_+$ within the full set $\JJ$ of bitableaux.

For a bitableau $\mathcal{P}= (a^i_j, b^i_j)_{i,j}\in \JJ$, we write the shifted function $\widehat{\mathcal{P}}: \widehat{I}_{\mathcal{P}}\to \mathbb{Z}\times \mathbb{Z}$ that is given by $\widehat{\mathcal{P}}(i,j) = (a^i_{j-1},b^i_j)$.

We say that an injective function $\tau:\widehat{I}_{\mathcal{P}}\to I_{\mathcal{P}}$ is a \textit{dominant connection} for $\mathcal{P}$, when it satisfies $\mathcal{P}\circ \tau = \widehat{\mathcal{P}}$.

\begin{theorem}\label{thm:connection}
For an ordered multisegment $\underline{\m}\in \mathcal{W}(\seg)$ and a bitableau $\mathcal{P}= (a^i_j, b^i_j)_{i,j}\in \JJ$, we have $\mathcal{P}\in \JJ_+$, if and only if, a dominant connection for $\mathcal{P}$ exists.

When such a dominant connection $\tau$ exists, we have
    \[
    \theta(\mathcal{P}) = \sum_{(i,j)\in I_{\mathcal{P}} \setminus \tau \left(\widehat{I}_{\mathcal{P}}\right)} [a^i_j, b^i_j]\in \Z_{\geq0}^{\seg}\;.
    \]
\end{theorem}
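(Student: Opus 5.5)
The proof will copy the strategy of Proposition~\ref{prop:crit-mackey-conn}: expand $\theta(\mathcal{P})$ as a signed sum of segments and translate coefficient positivity into an injection. By Proposition~\ref{prop:chq-description}, for $\mathcal{P}=(a^i_j,b^i_j)\in\JJ$ we have
\[
\theta(\mathcal{P}) = \sum_{(i,j)\in I_{\mathcal{P}}} [a^i_j,b^i_j]\;-\;\sum_{(i,j)\in \widehat{I}_{\mathcal{P}}} [a^i_{j-1},b^i_j]\in\Z^{\seg}.
\]
Two conventions need care. First, $b^i_1=y_i$. Second, when $a^i_1=y_i+1$ the term $[a^i_1,y_i]$ is interpreted as $0$, and similarly $[a^i_{j-1},b^i_j]$ is always a genuine segment because $a^i_{j-1}\le y_i+1\le b^i_j$ for $j\ge 2$. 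I will first dispose of the degenerate first column by noting that it contributes nothing to either side. Precisely, I either treat $(a,b)$ with $a=b+1$ as a formal symbol that is discarded, or I check that the index $(i,1)$ with $a^i_1=y_i+1$ is never in the image of a connection except through pairs with the same first coordinate, which are likewise empty segments.

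With this in place, for each segment $\Delta=[x,y]\in\seg$ the coefficient of $\Delta$ in $\theta(\mathcal{P})$ equals $\#B(\Delta)-\#\overleftarrow{B}(\Delta)$. Here
\[
B(\Delta)=\{(i,j)\in I_{\mathcal{P}}:\mathcal{P}(i,j)=(x,y)\},\qquad \overleftarrow{B}(\Delta)=\{(i,j)\in\widehat{I}_{\mathcal{P}}:\widehat{\mathcal{P}}(i,j)=(x,y)\}.
\]
This step is pure bookkeeping: segments are determined by their endpoint pairs, so distinct pairs give linearly independent elements of $\Z^{\seg}$.

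Next comes the equivalence. If a dominant connection $\tau$ exists, then $\mathcal{P}\circ\tau=\widehat{\mathcal{P}}$ and injectivity mean that $\tau$ restricts to injections $\overleftarrow{B}(\Delta)\hookrightarrow B(\Delta)$ for every $\Delta$. Hence all coefficients are nonnegative and $\mathcal{P}\in\JJ_+$. Moreover, the negative terms cancel exactly against the terms indexed by $\tau(\widehat{I}_{\mathcal{P}})$, which gives the stated formula $\theta(\mathcal{P})=\sum_{(i,j)\notin\mathrm{Im}\,\tau}[a^i_j,b^i_j]$. Conversely, if $\theta(\mathcal{P})\in\Z_{\geq0}^{\seg}$, then $\#\overleftarrow{B}(\Delta)\le\#B(\Delta)$ for every $\Delta$. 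I choose injections $\tau_\Delta:\overleftarrow{B}(\Delta)\hookrightarrow B(\Delta)$ and glue them into $\tau=\bigsqcup_\Delta\tau_\Delta$. The sets $\overleftarrow{B}(\Delta)$ partition $\widehat{I}_{\mathcal{P}}$, and the sets $B(\Delta)$ are disjoint, so $\tau$ is injective with $\mathcal{P}\circ\tau=\widehat{\mathcal{P}}$.

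The main obstacle is the edge case $a^i_1=y_i+1$, where the leading term is the empty segment while the pair $(y_i+1,y_i)$ still appears as a value of $\mathcal{P}$. A value $\widehat{\mathcal{P}}(i',j')=(y_i+1,y_i)$ would need $b^{i'}_{j'}=y_i<a^{i'}_{j'-1}$, which is impossible for $j'\ge2$, since there $b^{i'}_{j'}>y_{i'}\ge a^{i'}_{j'-1}-1$. Therefore such indices are never targets of $\tau$ and contribute nothing on either side. I will verify this inequality carefully; it settles the identification $[y_i+1,y_i]=0$ compatibly with the formula.
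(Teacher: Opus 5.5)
Your proposal is correct and follows the paper's proof essentially verbatim. The paper likewise rewrites $\theta(\mathcal{P})$ as $\sum_{I_{\mathcal{P}}}[\mathcal{P}(i,j)]-\sum_{\widehat{I}_{\mathcal{P}}}[\widehat{\mathcal{P}}(i,j)]$, with degenerate pairs $(y+1,y)$ sent to $0$, reads off the coefficient of each segment as a difference of fiber counts, and then glues per-segment injections into a dominant connection for the converse; your explicit check that degenerate entries are never targets of $\tau$ is a harmless extra step that is already implicit in that convention.
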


\begin{proof}
Let $\mathcal{P}\in \JJ$ be a given bitableau.

For $\mathcal{P}(i,j)=(x,y)$, with $(i,j)\in I_{\mathcal{P}}$, let us write $[\mathcal{P}(i,j)]:=[x,y]\in \seg$, when $x\leq y$, and $[\mathcal{P}(i,j)]=0\in \seg$, when $x= y+1$.

Equations \eqref{eq:theta} and \eqref{eq:theta2} can be written as
\begin{equation}\label{eq:theta3}
\theta(\mathcal{P}) = \sum_{(i,j)\in I_{\mathcal{P}}} [\mathcal{P}(i,j)]- \sum_{(i,j)\in \widehat{I}_{\mathcal{P}}} [\widehat{\mathcal{P}}(i,j)]\in \Z^{\seg}\;.
\end{equation}

Suppose now that $\tau$ is a $\mathcal{P}$-connection. Then,
\[
\theta(\mathcal{P}) = \sum_{(i,j)\in I_{\mathcal{P}}} [\mathcal{P}(i,j)]- \sum_{(i,j)\in \widehat{I}_{\mathcal{P}}} [\mathcal{P}(\tau(i,j))]= \sum_{(i,j)\in I_{\mathcal{P}} \setminus \tau \left(\widehat{I}_{\mathcal{P}}\right)} [\mathcal{P}(i,j)]\in \Z_{\geq0}^{\seg}\;,
\]
and one direction of the claim is settled.

Conversely, suppose that $\mathcal{P}\in \JJ_+$, that is, $\theta(\mathcal{P})\in \Z_{\geq0}^{\seg}$, holds.

We may rewrite \eqref{eq:theta3} once more, in the form of
\[
\theta(\mathcal{P}) = \sum_{\Delta \in \seg} \left( \# A_{\Delta}(\mathcal{P}) - \# \overleftarrow{A}_{\Delta}(\mathcal{P}) \right)\Delta\;,
\]
where
\[
    A_{\Delta}(\mathcal{P}) = \left\{(i,j)\in I_{\mathcal{P}} : [\mathcal{P}(i,j)]= \Delta \right\}, \quad 
    \overleftarrow{A}_{\Delta}(\mathcal{P}) =\left\{(i,j)\in \widehat{I}_{\mathcal{P}} : [\widehat{\mathcal{P}}(i,j)]= \Delta \right\}.
\]
Thus,
\[
\#A_{\Delta}(\mathcal{P})-\#\overleftarrow{A}_{\Delta}(\mathcal{P}) \ge 0
\]
holds by assumption, for any segment $\Delta\in \seg$. In particular, we may choose injections $\tau_{\Delta}:\overleftarrow{A}_{\Delta}(\mathcal{P})\hookrightarrow A_{\Delta}(\mathcal{P})$, and assemble them into a single injective map
\[
\tau:\widehat{I}_{\mathcal{P}}\hookrightarrow 
I_{\mathcal{P}}, \qquad
\tau|_{\overleftarrow{A}_{\Delta}(\mathcal{P})}=\tau_{\Delta},\;\forall\Delta\in \seg\;,
\]
when bearing in mind that the index set $\widehat{I}_{\mathcal{P}}$ is a disjoint union over $\left\{\overleftarrow{A}_{\Delta}(\mathcal{P})\right\}_{\Delta\in \seg}$.

By construction, we now have $\mathcal{P}(\tau(i,j))=\widehat{\mathcal{P}}(i,j)$, for all $(i,j)\in \widehat{I}_{\mathcal{P}} $.

\end{proof}

For an ordered multisegment $\underline{\m}\in \mathcal{W}(\seg)$, let us consider the subsets
\[
\JJ_0 = \left\{\mathcal{P}= (a^i_j, b^i_j)_{i,j}\in \JJ\::\: \,b^i_j\in E(\langle \m\rangle )\;, \forall (i,j)\in I_{\mathcal{P}} \right\}\;,
\]
\[
\JJ_{00} = \left\{\mathcal{P}= (a^i_j, b^i_j)_{i,j}\in \JJ\::\: a^i_j\in D(\langle \m\rangle ),\,b^i_j\in E(\langle \m\rangle )\;, \forall (i,j)\in I_{\mathcal{P}} \right\}\subseteq \JJ_0
\]
of bitableaux, which are evidently finite.

\begin{proposition}\label{prop:+0}
For an ordered multisegment $\underline{\m}\in \mathcal{W}(\seg)$, an inclusion $\JJ_+\subseteq \JJ_{00}$ holds.
\end{proposition}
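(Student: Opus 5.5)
The plan is to use Theorem~\ref{thm:connection}. For $\mathcal{P}=(a^i_j,b^i_j)_{i,j}\in\JJ_+$ we fix a dominant connection $\tau:\widehat{I}_{\mathcal{P}}\to I_{\mathcal{P}}$. Each of the two membership conditions, $b^i_j\in E(\langle\underline{\m}\rangle)$ and $a^i_j\in D(\langle\underline{\m}\rangle)$, is then obtained by following a chain of indices generated by $\tau$. Along such a chain one coordinate stays fixed and the other strictly increases, so the chain must terminate, and it can only terminate at a boundary entry of a row. Boundary entries are controlled directly by the definition of $S(x_i,y_i)$: $b^i_1=y_i$ and $a^i_{k_i}=x_i$.

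\emph{End points.} Fix $(i,j)\in I_{\mathcal{P}}$. If $j=1$, then $b^i_1=y_i\in E(\langle\underline{\m}\rangle)$ and we are done. If $j\geq2$, set $(i_1,j_1):=\tau(i,j)$. The relation $\mathcal{P}\circ\tau=\widehat{\mathcal{P}}$ gives
\[
(a^{i_1}_{j_1},b^{i_1}_{j_1})=(a^i_{j-1},b^i_j),
\]
so the entry at $(i_1,j_1)$ has the same $b$-value as the entry at $(i,j)$. Its $a$-value is $a^i_{j-1}$, which is strictly larger than $a^i_j$ because $a$ strictly decreases along a row. If $j_1\geq2$, we repeat with $\tau(i_1,j_1)$, and continue in this way. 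This produces a sequence of indices with constant $b$-value and strictly increasing $a$-values. The indices are therefore pairwise distinct, and since $I_{\mathcal{P}}$ is finite the sequence cannot continue forever. It stops exactly at the first index $(i_t,j_t)$ with $j_t=1$, i.e. one outside $\widehat{I}_{\mathcal{P}}$. At that point
\[
b^i_j=b^{i_t}_1=y_{i_t}\in E(\langle\underline{\m}\rangle).
\]

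\emph{Begin points.} The argument is the mirror image, running the chain upward in rows. Fix $(i,j)\in I_{\mathcal{P}}$. If $j=k_i$, then $a^i_{k_i}=x_i\in D(\langle\underline{\m}\rangle)$. Otherwise $(i,j+1)\in\widehat{I}_{\mathcal{P}}$, and we set $(i_1,j_1):=\tau(i,j+1)$. Then
\[
(a^{i_1}_{j_1},b^{i_1}_{j_1})=(a^i_j,b^i_{j+1}),
\]
so the $a$-value is unchanged and the $b$-value $b^i_{j+1}$ is strictly larger than $b^i_j$, since $b$ strictly increases along a row. If $j_1<k_{i_1}$, we iterate using $\tau(i_1,j_1+1)$. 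Now the $b$-values strictly increase, so the indices are again distinct and the process terminates. It can only stop at an index with $j_t=k_{i_t}$, where
\[
a^i_j=a^{i_t}_{k_{i_t}}=x_{i_t}\in D(\langle\underline{\m}\rangle).
\]
Combining the two parts gives $\mathcal{P}\in\JJ_{00}$.

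The only delicate point is confirming that each chain can stop \emph{only} at a boundary entry of a row. This holds because $\tau$ is defined on every index with $j\geq2$, which means every index of $\widehat{I}_{\mathcal{P}}$ can be continued. The $a^i_0=y_i+1$ convention is never an issue: the chains only ever read values $a^i_{j-1}$ with $j\geq2$ or $a^i_j$ with $j\geq1$, all of which are genuine entries of the row.
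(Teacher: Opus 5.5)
Your proof is correct and follows essentially the same route as the paper. Both fix a dominant connection $\tau$ from Theorem~\ref{thm:connection} and follow $\tau$-chains until they terminate at a row boundary: for end points, $b$ stays fixed while $a$ increases until $j=1$; for begin points, one uses $(i,j)\mapsto\tau(i,j+1)$, so $a$ stays fixed while $b$ increases until $j=k_i$. Your explicit remark that strict monotonicity makes all indices in a chain pairwise distinct justifies termination slightly more cleanly than the paper's no-cycle remark.
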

\begin{proof}
Let $\mathcal{P}= (a^i_j, b^i_j)_{i,j}\in \JJ_+$ be a given bitableau, and $\tau$ a $\mathcal{P}$-connection, which exists by Theorem \ref{thm:connection}.

Let $(i_0,j_0)\in \widehat{I}_{\mathcal{P}}$ be a fixed index. We denote $\tau^k(i_0,j_0)=(i_k,j_k)\in I_{\mathcal{P}}$, for $1\leq k$ where the index is defined. 

For such $1\leq k$, we have 
\[
(a^{i_{k-1}}_{j_{k-1}-1},b^{i_{k-1}}_{j_{k-1}})=\widehat{\mathcal{P}}(i_{k-1},j_{k-1}) = \mathcal{P}(i_k,j_k) = (a^{i_k}_{j_k},b^{i_k}_{j_k})\;.
\]
Thus, $a^{i_{k-1}}_{j_{k-1}}<a^{i_{k-1}}_{j_{k-1}-1}=a^{i_k}_{j_k}$ and $b^{i_{k-1}}_{j_{k-1}}=b^{i_k}_{j_k}$. In particular, we see that $a^{i_0}_{j_0}< a^{i_k}_{j_k}$, and that $\tau$ cannot form of a cycle in $\widehat{I}_{\mathcal{P}}$, in the sense that there cannot exists $1\leq k$, for which $(i_k,j_k) = (i_0,j_0)$.

Therefore by finiteness of $\widehat{I}_{\mathcal{P}}$, $1\leq k_{\max}$ must exist, for which $(i_{k_{\max}},j_{k_{\max}}) \in I_{\mathcal{P}}\setminus \widehat{I}_{\mathcal{P}}$. 

Hence, $j_{k_{\max}}=1$ and $b^{i_0}_{j_0} = b^{i_{k_{\max}}}_1 \in E(\langle \underline{\m}\rangle )$ by definition of $\JJ$.

Let us now consider the index set
\[
\widetilde{I}_{\mathcal{P}} = \{(i,j)\in I_{\mathcal{P}}\::\: j<j(i)\}\;,
\]
where $j=j(i)$ is the maximal number with $(i,j)\in I_{\mathcal{P}}$, and the injective map $\tau_+:\widetilde{I}_{\mathcal{P}} \to I_P$, given by $\tau_+(i,j) :=\tau(i,j+1)$.

We denote $(\tau_+)^k(i_0,j_0)=(i_{-k},j_{-k})\in I_{\mathcal{P}}$, for $1\leq k$ where the index is defined. 

Now, since
\[
(a^{i_{-k+1}}_{j_{-k+1}},b^{i_{-k+1}}_{j_{-k+1}+1})=\widehat{\mathcal{P}}(i_{-k+1},j_{-k+1}+1) = \mathcal{P}(i_{-k},j_{-k}) = (a^{i_k}_{j_k},b^{i_{-k}}_{j_{-k}})
\]
holds for such $1\leq k$, by similar arguments, we must have $k_{\min}\leq 0$ with $j_{k_{\min}}= j(i_{k_{\min}})$ and $a^{i_0}_{j_0}= a^{i_{k_{\min}}}_{j_{k_{\min}}}\in D(\langle \underline{\m}\rangle )$.

\end{proof}

\begin{corollary}
For any any ordered multisegment $\underline{\m}\in \mathcal{W}(\seg)$, the set $\JJ_+$ is finite.

The dominant part of the limit $q$-character is a polynomial. In other words, we have an additive map
\[
\qchp: \Z[\seg]\to \Z[\seg]\;.
\]
\end{corollary}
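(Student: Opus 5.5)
The finiteness of $\JJ_+$ follows directly from Proposition \ref{prop:+0}, which gives $\JJ_+\subseteq \JJ_{00}$. So the plan is to check that $\JJ_{00}$ is a finite set and then read off the polynomiality statement.

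A bitableau $\mathcal{P}=\mathcal{P}_1\cdots\mathcal{P}_r\in\JJ_{00}$ has $\mathcal{P}_i\in S(x_i,y_i)$. Its first coordinates satisfy $x_i=a^i_k<\cdots<a^i_1\leq y_i+1$, so each row has length $k\leq y_i-x_i+2$, and the number of rows $r$ is fixed by $\underline{\m}$. Every entry $(a^i_j,b^i_j)$ lies in the finite set $D(\langle\underline{\m}\rangle)\times E(\langle\underline{\m}\rangle)$; in fact, membership in $\JJ_0$ alone bounds the $b$-entries, and the $a$-entries are already bounded by $x_i\leq a^i_j\leq y_i+1$. Hence only finitely many bitableaux can occur, $\JJ_{00}$ is finite, and so is $\JJ_+$.

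For the second claim, recall the formula
\[
\qchp(\exp(\m))=\sum_{\mathcal{P}\in\JJ_+}\exp(\theta(\mathcal{P})),
\]
valid for any ordering $\underline{\m}$ of $\m$. Each $\theta(\mathcal{P})$ with $\mathcal{P}\in\JJ_+$ lies in $\Z_{\geq0}^{\seg}$ by definition, so each summand is a genuine monomial in $\Z[\seg]$. Since the index set is finite, $\qchp(\exp(\m))\in\Z[\seg]$. The monomials $\exp(\m)$ form a $\Z$-basis of $\Z[\seg]$, and $\qchp=(\cdot)_+\circ\qch$ is additive. Extending additively therefore gives an additive map $\qchp:\Z[\seg]\to\Z[\seg]$.

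No step here is a genuine obstacle: the substantive input, that every dominant bitableau takes values in $D\times E$, has already been established through the dominant connection argument of Theorem \ref{thm:connection} and Proposition \ref{prop:+0}.
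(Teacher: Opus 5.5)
Your argument is correct and is essentially the paper's: the corollary is recorded as an immediate consequence of Proposition~\ref{prop:+0}, since $\JJ_+\subseteq\JJ_{00}$ and $\JJ_{00}$ is evidently finite, after which $\qchp(\exp(\m))=\sum_{\mathcal{P}\in\JJ_+}\exp(\theta(\mathcal{P}))$ is a finite sum of monomials. Your side remark is also right: the $b$-half of Proposition~\ref{prop:+0}, i.e.\ $\JJ_+\subseteq\JJ_0$, already suffices for finiteness, because the row lengths and $a$-entries are bounded by the definition of $S(x_i,y_i)$.
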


\subsection{Main theorems}

We would like to show that the dominant part of the limit $q$-character map coincides with the reciprocal character map for affine Hecke algebras. 

For that goal we will construct a comparison mechanism between the Mackey theory combinatorics for standard modules (as in Proposition \ref{prop:mackey}) and bitableau combinatorics for $\qch$ as in Section \ref{sect:explic}.

\subsubsection{Bitableaux transfer}

Let $\m\in \Z_{\geq0}^{\seg}$ be a given multisegment.

We construct an injective map 
\[
\varphi_{\m}: \JJ_0 \to \mathcal{K}(\underline{\m})_0\;,
\]
for any ordering $\underline{\m}\in \mathcal{W}(\seg)$ of $\m$, where $l_{\m}=|E(\m)|$.

Let us write $E(\m) = \{d_1< \ldots <d_{l_{\m}}\}$, and denote $e(d) =e$ for each $d =d_e\in E(\m)$.

Let $\underline{\m} = [x_1,y_1]\cdots[x_r,y_r]$ be a fixed such ordering of $\m$.

For a given bitableau $\mathcal{P} = \mathcal{P}_1\cdots\mathcal{P}_r\in \JJ_0$ with $\mathcal{P}_i = (a^i_1,b^i_1)\cdots (a^i_{k_i},b^i_{k_i})$, $i=1,\ldots,r$, we define
\[
\varphi_{\m}(\mathcal{P}) = \mathcal{Q}_1\cdots \mathcal{Q}_r\in \tab(\Z\times \Z)
\]
with
\[
\mathcal{Q}_i:= \left\{\begin{array}{ll} (a^i_1,e(b^i_1))\cdots (a^i_{k_i},e(b^i_{k_i}))\,,& a^i_1 < y_i+1 \\ 
(a^i_2,e(b^i_2))\cdots (a^i_{k_i},e(b^i_{k_i}))\,,& a^i_1 = y_i+1 \end{array}  \right.\;.
\]

Put differently, there is a natural embedding of index sets $\iota_{\mathcal{P}}: I_{\varphi_{\m}(\mathcal{P})}\to I_{\mathcal{P}}$ that is given by
\[
\iota_{\mathcal{P}}(i,j) = \left\{ \begin{array}{ll} (i,j) &  a^i_1 < y_i+1 \\ (i,j+1) & a^i_1 = y_i+1 \end{array}  \right.\;,
\]
so that $\varphi_{\m}(\mathcal{P})(i,j) = (a,e(b))$, when $\mathcal{P}(\iota_{\mathcal{P}}(i,j))= (a,b)$.

\begin{proposition}\label{prop:bij0}
\begin{enumerate}
    \item\label{it:bij01}
The map $\varphi_{\m}:\JJ_0 \to \mathcal{K}(\underline{\m})_0$ is well-defined and is a bijection.

\item\label{it:bij02} Whenever $\varphi_{\m}(\mathcal{P}) = \mathcal{Q}$ holds in the bijection above, we have $\iota_{\mathcal{P}}(\widehat{I}_{\mathcal{Q}}\sqcup \widetilde{I}_{\mathcal{Q}}) = \widehat{I}_{\mathcal{P}}$

\item\label{it:bij03} If $\tau: \widehat{I}_{\mathcal{Q}}\sqcup \widetilde{I}_{\mathcal{Q}}\to I_{\mathcal{Q}}$ is a Mackey connection for $\mathcal{Q}$, then $\iota_{\mathcal{P}}\circ\tau\circ \iota_{\mathcal{P}}^{-1}$ is a dominant connection for $\mathcal{P}$.

\item\label{it:bij04} If $\tau': \widehat{I}_{\mathcal{P}}\to I_{\mathcal{P}}$ is a dominant connection for $\mathcal{P}$, there is Mackey connection $\tau$ for $\mathcal{Q}$, such that $\tau' = \iota_{\mathcal{P}}\circ\tau\circ \iota_{\mathcal{P}}^{-1}$.

\end{enumerate}
\end{proposition}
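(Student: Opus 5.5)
The plan is to work directly from the definitions of $S(x,y)$, $S'(x,y,s)$, $\JJ_0$ and $\mathcal{K}(\underline{\m})_0$, handling one row $\mathcal{P}_i$ at a time. Everything is compatible with the row decomposition, so it is enough to treat a single segment $[x_i,y_i]$.

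\emph{Step 1: well-definedness.} Take $\mathcal{P}_i=(a_j,b_j)_{j=1}^{k}\in S(x_i,y_i)$ with all $b_j\in E(\m)$. The map $b\mapsto e(b)$ is strictly increasing on $E(\m)$. Hence $y_i=b_1<\cdots<b_k$ gives $e(b_1)<\cdots<e(b_k)\le l_{\m}$, and $d_{e(b_1)}=y_i$.
\begin{itemize}
\item If $a_1\le y_i$, then $\mathcal{Q}_i$ satisfies $x_i=a_k<\cdots<a_1\le y_i$, so $\mathcal{Q}_i\in S'(x_i,y_i,l_{\m})$. The $\mathcal{K}_0$ condition holds with equality: $y_i=d_{e^i_1}$.
\item If $a_1=y_i+1$, the truncated word starts at $(a_2,e(b_2))$ with $a_2\le y_i$. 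Here $y_i<b_2=d_{e^i_1}$, so the $\mathcal{K}_0$ inequality holds, now strictly.
\end{itemize}
The degenerate case $k=1$, $a_1=y_i+1$ is impossible, since $a_k=x_i\le y_i$.

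\emph{Step 2: the inverse.} Given $\mathcal{Q}_i=(a_j,e_j)_{j=1}^{k}\in S'$ with $y_i\le d_{e_1}$, set $b_j=d_{e_j}$.
\begin{itemize}
\item If $y_i=d_{e_1}$, take $\mathcal{P}_i=(a_j,b_j)_j$.
\item If $y_i<d_{e_1}$, prepend $(y_i+1,y_i)$.
\end{itemize}
In both cases the result lies in $S(x_i,y_i)$ with all $b$'s in $E(\m)$; note $y_i\in E(\m)$ automatically. The two constructions are mutually inverse. This proves (1).

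\emph{Step 3: part (2).} In the untruncated case, $\iota_{\mathcal{P}}$ is the identity on row $i$, and $(i,1)\notin\widetilde{I}_{\mathcal{Q}}$ because $y_i=d_{e^i_1}$. So $\widehat{I}$ maps onto $\widehat{I}$. In the truncated case, $(i,1)\in\widetilde{I}_{\mathcal{Q}}$ and $\iota_{\mathcal{P}}$ shifts $j\mapsto j+1$. Thus $\{(i,j):j\ge1\}$ maps onto $\{(i,j):j\ge2\}=\widehat{I}_{\mathcal{P}}\cap\text{row }i$.

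\emph{Step 4: parts (3) and (4).} These are the substantive step, and I expect the main obstacle to be bookkeeping of the conventions $a^i_0=y_i+1$ on the two sides. The key identity I will verify is
\[
\widehat{\mathcal{Q}}(i,j)=(a,e(b))\quad\text{iff}\quad \widehat{\mathcal{P}}(\iota_{\mathcal{P}}(i,j))=(a,b),
\]
together with $\mathcal{Q}(i,j)=(a,e(b))$ iff $\mathcal{P}(\iota_{\mathcal{P}}(i,j))=(a,b)$. For the shift: in the truncated case, $\widehat{\mathcal{Q}}(i,1)=(y_i+1,e^i_1)$ corresponds to $\widehat{\mathcal{P}}(i,2)=(a^i_1,b^i_2)=(y_i+1,b^i_2)$. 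Elsewhere the previous $a$ is the same entry of $\mathcal{P}$.

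Since $e$ is injective on $E(\m)$, and all second coordinates of $\widehat{\mathcal{P}}$ lie in $E(\m)$, matching in $\mathcal{Q}$ is equivalent to matching in $\mathcal{P}$ under $\iota_{\mathcal{P}}$. Given a Mackey connection $\tau$, the conjugate $\iota_{\mathcal{P}}\tau\iota_{\mathcal{P}}^{-1}$ is then defined on $\widehat{I}_{\mathcal{P}}$ by (2), is injective, and satisfies $\mathcal{P}\circ\tau'=\widehat{\mathcal{P}}$. This gives (3).

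For (4), I first need the image of a dominant connection $\tau'$ to lie in $\iota_{\mathcal{P}}(I_{\mathcal{Q}})$. The only missing indices are the truncated entries $(i,1)$ with value $(y_i+1,y_i)$. No $\widehat{\mathcal{P}}(i',j')=(a^{i'}_{j'-1},b^{i'}_{j'})$ can equal this, because $a^{i'}_{j'-1}\le b^{i'}_{j'-1}+1\le b^{i'}_{j'}$, which forbids $a=b+1$. Hence $\tau:=\iota_{\mathcal{P}}^{-1}\tau'\iota_{\mathcal{P}}$ is a Mackey connection, and this gives (4).
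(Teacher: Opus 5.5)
Your proposal is correct and follows essentially the same route as the paper. Like the paper, you construct the explicit inverse (prepending $(y_i+1,y_i)$ exactly when $y_i<d_{e^i_1}$) and prove (3) from the entrywise correspondence $\widehat{\mathcal{Q}}(i,j)=(a,e(b))\Leftrightarrow\widehat{\mathcal{P}}(\iota_{\mathcal{P}}(i,j))=(a,b)$. For (4) you use the same observation as the paper: no value of $\widehat{\mathcal{P}}$ can have the form $(b+1,b)$, since $a^{i'}_{j'-1}\le y_{i'}+1\le b^{i'}_{j'}$. Your write-up is slightly more careful than the paper's, for instance in ruling out $k=1$ with $a_1=y_i+1$ and in checking the shifted boundary case at $j=1$.
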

\begin{proof}
Let us write $\mathcal{P} =(a^i_j,b^i_j)_{i,j}\in \JJ_0$. 
\begin{enumerate}
    \item 

Let us note that $d_{e(b^i_j)} = b^i_j$ holds, for all $(i,j)\in I_{\mathcal{P}}$. Thus, the condition $\varphi_{\m}(\mathcal{P})\in \mathcal{K}(\underline{\m})_0$ amounts to either $y_i\leq b^i_1$ or $y_i\leq b^i_2$, for all $1\leq i\leq r$, which is clearly satisfied.

We further see that writing $\psi_{\m}(\mathcal{Q}) = \mathcal{P}_1\cdots \mathcal{P}_r\in \tab(\Z\times \Z)$ with
\[
\mathcal{P}_i:= \left\{\begin{array}{ll} (a^i_1,d_{e^i_1})\cdots (a^i_{k_i},d_{e^i_{s_i}})\,,& y_i= d_{e^i_1} \\ 
(y_i+1, y_i)(a^i_1,d_{e^i_1})\cdots (a^i_{k_i},d_{e^i_{s_i}})\,,& y_i < d_{e^i_1}\end{array}  \right.\;,
\]
for a given $\mathcal{Q} = \mathcal{Q}_1\cdots\mathcal{Q}_r\in \mathcal{K}(\underline{\m})_0$ with $\mathcal{Q}_i = (a^i_1,e^i_1)\cdots (a^i_{s_i},e^i_{s_i})$, $i=1,\ldots,r$, provides an inverse map to $\varphi_{\m}$.

\item This is clear from the definition of the inverse $\psi_{\m}$.

\item Let us note, that for all $(i,j)\in I_{\mathcal{Q}}$, we have $\widehat{\mathcal{Q}}(i,j) = (a,e(b))$, when $\widehat{\mathcal{P}}(\iota_{\mathcal{P}}(i,j))= (a,b)$. Thus, the claim follows directly from definitions, once \eqref{it:bij02} is established.

\item We are left to verify the inclusion $\mathrm{Im}(\tau')\subseteq \mathrm{Im}(\iota_{\mathcal{P}})$ holds, so that $\tau = \iota_{\mathcal{P}}^{-1}\circ \tau'\circ \iota_{\mathcal{P}}$ becomes well-defined.

Indeed, when assuming the contrary we obtain $(i,j)\in \widehat{I}_{\mathcal{P}}$ with $\tau'(i,j) = (i',1)$ and $a^{i'}_1 = y_{i'}+1= b^{i'}_1+1$. Since $\tau'$ is a dominant connection, we have $(a^{i'}_1, b^{i'}_1) = (a^i_{j-1}, b^i_j)$, which now implies $a^i_{j-1}= b^i_j+1$. Yet, $j>1$ gives $a^i_{j-1}\leq y_i+1\leq b^i_j$ and a contradiction.

\end{enumerate}
\end{proof}

\subsubsection{Final comparison}

\begin{theorem}\label{thm:theta-transfer}
Let $\underline{\m}\in \mathcal{W}(\seg)$ be an ordering of a multisegment $\m\in \Z_{\geq0}^{\seg}$. 

The map $\varphi_{\m}$ restricts to a bijection between the sets of tableaux $\JJ_+$ and $\mathcal{K}(\underline{\m})_+$.

An equality of multisegments
\[
\theta(\mathcal{P}) = \theta(\varphi_{\m}(\mathcal{P}))\in \Z_{\geq0}^{\seg}
\]
holds, for all $\mathcal{P}\in \JJ_+$.

\end{theorem}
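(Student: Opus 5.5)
The plan is to assemble the statement from Proposition \ref{prop:bij0}, the two connection criteria (Theorem \ref{thm:connection} and Proposition \ref{prop:crit-mackey-conn}), and Proposition \ref{prop:+0}.

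First, observe that $\JJ_+\subseteq\JJ_{00}\subseteq\JJ_0$ by Proposition \ref{prop:+0}, and that $\mathcal{K}(\underline{\m})_+\subseteq\mathcal{K}(\underline{\m})_0$, as noted after its definition. Hence both sets lie in the domain and codomain of the bijection $\varphi_{\m}:\JJ_0\to\mathcal{K}(\underline{\m})_0$ of Proposition \ref{prop:bij0}\eqref{it:bij01}, and it suffices to show that $\varphi_{\m}$ maps $\JJ_+$ into $\mathcal{K}(\underline{\m})_+$ and that its inverse maps $\mathcal{K}(\underline{\m})_+$ into $\JJ_+$.

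For $\mathcal{P}\in\JJ_+$, Theorem \ref{thm:connection} yields a dominant connection $\tau'$ for $\mathcal{P}$. By Proposition \ref{prop:bij0}\eqref{it:bij04}, $\tau'$ is conjugate via $\iota_{\mathcal{P}}$ to a Mackey connection $\tau$ for $\mathcal{Q}=\varphi_{\m}(\mathcal{P})$, so $\mathcal{Q}\in\mathcal{K}(\underline{\m})_+$ by Proposition \ref{prop:crit-mackey-conn}. Conversely, for $\mathcal{Q}\in\mathcal{K}(\underline{\m})_+$ with $\mathcal{P}=\varphi_{\m}^{-1}(\mathcal{Q})\in\JJ_0$, Proposition \ref{prop:crit-mackey-conn} gives a Mackey connection $\tau$. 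By Proposition \ref{prop:bij0}\eqref{it:bij03}, $\iota_{\mathcal{P}}\circ\tau\circ\iota_{\mathcal{P}}^{-1}$ is then a dominant connection, and Theorem \ref{thm:connection} gives $\mathcal{P}\in\JJ_+$. This settles the bijection.

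For the equality of multisegments, fix $\mathcal{P}\in\JJ_+$, a Mackey connection $\tau$ for $\mathcal{Q}$, and set $\tau'=\iota_{\mathcal{P}}\tau\iota_{\mathcal{P}}^{-1}$. Theorem \ref{thm:connection} expresses $\theta(\mathcal{P})$ as a sum of $[a^i_j,b^i_j]$ over $I_{\mathcal{P}}\setminus\tau'(\widehat I_{\mathcal{P}})$. Proposition \ref{prop:crit-mackey-conn} expresses $\theta(\mathcal{Q})$ as a sum of $[a,d_{e}]$ over $I_{\mathcal{Q}}\setminus\tau(\widehat I_{\mathcal{Q}}\sqcup\widetilde I_{\mathcal{Q}})$.

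The main step, and the only real check, is to match these two index sets under $\iota_{\mathcal{P}}$. The complement $I_{\mathcal{P}}\setminus\mathrm{Im}(\iota_{\mathcal{P}})$ consists of the indices $(i,1)$ with $a^i_1=y_i+1$. Such an index contributes the empty segment $[y_i+1,y_i]=0$, and it cannot lie in the image of $\tau'$, by the argument in Proposition \ref{prop:bij0}\eqref{it:bij04}. So we may drop these indices. On the image of $\iota_{\mathcal{P}}$, we have $\mathrm{Im}(\tau')=\iota_{\mathcal{P}}(\mathrm{Im}\,\tau)$, and each surviving term $[a^i_j,b^i_j]$ equals $[a,d_{e(b)}]$ since $d_{e(b)}=b$. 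The one place needing care is that the first-column entries with $y_i=d_{e^i_1}$ (excluded from $\widetilde I_{\mathcal{Q}}$) correspond exactly to $\iota_{\mathcal{P}}(i,1)=(i,1)\notin\widehat I_{\mathcal{P}}$, which is Proposition \ref{prop:bij0}\eqref{it:bij02}. This gives $\theta(\mathcal{P})=\theta(\mathcal{Q})$.
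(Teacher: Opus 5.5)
Your proposal is correct and follows essentially the same route as the paper: the bijection comes from transporting connections via Proposition \ref{prop:bij0}\eqref{it:bij03},\eqref{it:bij04} together with Theorem \ref{thm:connection} and Proposition \ref{prop:crit-mackey-conn}, and $\theta(\mathcal{P})=\theta(\varphi_{\m}(\mathcal{P}))$ follows from matching index sets under $\iota_{\mathcal{P}}$ via Proposition \ref{prop:bij0}\eqref{it:bij02}, with the dropped indices (those with $a^i_1=y_i+1$) contributing the empty segment. You also make explicit, via Proposition \ref{prop:+0}, that $\JJ_+\subseteq\JJ_0$, so that $\varphi_{\m}$ is defined on $\JJ_+$; the paper leaves this point implicit.
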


\begin{proof}
The bijection claim follows from the equivalence of connection notions that is stated in Proposition \ref{prop:bij0} and the criteria of Proposition \ref{prop:crit-mackey-conn} and Theorem \ref{thm:connection}.

For the second claim, let us fix tableaux $\mathcal{P} =(a^i_j,b^i_j)_{i,j}\in  \mathcal{J}(\underline{\m})_+$ and $\mathcal{Q}= \varphi_{\m}(\mathcal{P}) =(\widetilde{a}^i_j,e^i_j)_{i,j}\in \mathcal{K}(\underline{\m})_+$ and connections $\tau' = \iota_{\mathcal{P}}\circ\tau\circ \iota_{\mathcal{P}}^{-1}$ as in Proposition \ref{prop:bij0}\eqref{it:bij03}\eqref{it:bij04}. 

By Proposition \ref{prop:bij0}\eqref{it:bij02}, we have an equality
\[
 \sum_{(i,j)\in I_{\mathcal{P}} \setminus \tau' \left(\widehat{I}_{\mathcal{P}}\right)} [a^i_j, b^i_j]=\sum_{(i,j)\in I_{\mathcal{Q}} \setminus \tau \left(\widehat{I}_{\mathcal{Q}}\sqcup \widetilde{I}_{\mathcal{Q}}\right)} [\widetilde{a}^i_j, d_{e^i_j}] +  \sum_{(i,j)\in I_{\mathcal{P}} \setminus \iota_{\mathcal{P}} \left(I_{\mathcal{Q}}\right)} [a^i_j, b^i_j]
\]
in $\Z_{\geq0}^{\seg}$. The right-hand side of this equality equals $\theta(\mathcal{Q})$ by Proposition \ref{prop:crit-mackey-conn} and the observation that $[a^i_j, b^i_j] = [y^i_1+1, y^i]=0$, for all $(i,j)\in I_{\mathcal{P}} \setminus \iota_{\mathcal{P}} \left(I_{\mathcal{Q}}\right)$. The left-hand side equals to $\theta(\mathcal{P})$ by Theorem \ref{thm:connection}.

\end{proof}

\begin{theorem}\label{thm:main}
For all multisegments $\m,\n\in \Z_{\geq0}^{\seg}$ and an ordering $\underline{\m}$ of $\m$, we have
\[
A(\m,\n) = \#\left\{\mathcal{Q}\in \JJ_+ \;:\; \theta(\mathcal{Q}) = \n \right\} \;.
\]
In particular, we have
\[
\qchp(\exp(\m)) = \sum_{\n\in \Z_{\geq0}^{\seg}} A(\m,\n)\exp(\n)\;,
\]
and the diagram
\[
\begin{diagram}
\dgARROWLENGTH=2em
\node{ \Z[\seg] } \arrow{s,t}{\underline{\zeta} }  \arrow{e,t}{ \qchp }\node{  \Z[\seg] }\\
\node{\mathbf{R_1} } \arrow{ne,t}{ \Large \ch^\otimes } 
\end{diagram}\;
\]
    commutes, with respect to the maps in \eqref{eq:zel-map} and \eqref{eq:recip-def}.
\end{theorem}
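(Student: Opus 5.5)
The plan is to combine Corollary \ref{cor:Kcounting} with Theorem \ref{thm:theta-transfer}. Fix the multisegments $\m,\n$ and an ordering $\underline{\m}$ of $\m$. By Corollary \ref{cor:Kcounting}, $A(\m,\n)$ equals the number of $\mathcal{Q}\in\mathcal{K}(\underline{\m})_+$ with $\theta(\mathcal{Q})=\n$, where $\theta$ is the map of Lemma \ref{lem:Kunique}, computed explicitly by Proposition \ref{prop:crit-mackey-conn}. Theorem \ref{thm:theta-transfer} gives a bijection $\varphi_{\m}:\JJ_+\to\mathcal{K}(\underline{\m})_+$ satisfying $\theta(\mathcal{P})=\theta(\varphi_{\m}(\mathcal{P}))$. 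Hence $\varphi_{\m}$ restricts, for each $\n$, to a bijection between the fibres $\{\mathcal{P}\in\JJ_+:\theta(\mathcal{P})=\n\}$ and $\{\mathcal{Q}\in\mathcal{K}(\underline{\m})_+:\theta(\mathcal{Q})=\n\}$, and the first formula follows. The only point to watch is that the symbol $\theta$ is used with two meanings, \eqref{eq:theta2} on $\JJ$ and Lemma \ref{lem:Kunique} on $\mathcal{K}$. Theorem \ref{thm:theta-transfer} is exactly the statement that these agree under $\varphi_{\m}$, so nothing further needs to be checked.

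For the second formula, start from the expansion $\qchp(\exp(\m))=\sum_{\mathcal{P}\in\JJ_+}\exp(\theta(\mathcal{P}))$ recorded before Theorem \ref{thm:connection}. This sum is finite by Proposition \ref{prop:+0}, and each $\theta(\mathcal{P})$ lies in $\Z_{\geq0}^{\seg}$ by definition of $\JJ_+$. Grouping the terms by the value $\n=\theta(\mathcal{P})$ and applying the first formula gives $\qchp(\exp(\m))=\sum_{\n}A(\m,\n)\exp(\n)$.

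For the commutative diagram, both $\qchp$ and $\ch^\otimes\circ\underline{\zeta}$ are additive maps $\Z[\seg]\to\Z[\seg]$, so it suffices to compare them on the monomial basis $\{\exp(\m)\}$. Here $\underline{\zeta}(\exp(\m))=M(\m)=[\zeta(\m)]$ by \eqref{eq:zel-map} and the Zelevinsky classification. By \eqref{eq:recip-def} we then get $\ch^\otimes(\zeta(\m))=\sum_{\n}[\rres_{\underline{n_{\n}}}(\zeta(\m)):\zeta(\n)^\otimes]\exp(\n)=\sum_{\n}A(\m,\n)\exp(\n)$. This matches the second formula.

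No step here is a genuine obstacle, since the substantive work is contained in Theorem \ref{thm:theta-transfer} and Corollary \ref{cor:Kcounting}. The only bookkeeping is grading. The sum in \eqref{eq:recip-def} runs over $\n$ with $n_{\n}=n_{\m}$, whereas the sum over $\JJ_+$ ranges over all $\n$. These are reconciled because $\supp(\theta(\mathcal{P}))=\supp(\m)$ holds on $\JJ_+$, via the matching with Mackey tableaux and Remark \ref{rem:chars-cent}, and because $A(\m,\n)=0$ otherwise.
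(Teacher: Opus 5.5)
Your proposal is correct and follows the paper's proof essentially verbatim: you obtain the count from Corollary \ref{cor:Kcounting} combined with the $\theta$-preserving bijection of Theorem \ref{thm:theta-transfer}, the second formula from the $\JJ_+$-expansion of $\qchp(\exp(\m))$, and the commutativity of the diagram by comparing both maps on the basis $\{\exp(\m)\}$ via \eqref{eq:recip-def}. Your added grading remark is sound, and the identity $\supp(\theta(\mathcal{P}))=\supp(\m)$ it relies on follows more directly by telescoping in \eqref{eq:theta}, so you do not need to route it through the Mackey tableaux.
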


\begin{proof}
The first equality follows from Corollary \ref{cor:Kcounting} and Theorem \ref{thm:theta-transfer}. The second equality follows from the first one and the description of $\qch$, and thus of $\qchp$, in Proposition \ref{prop:chq-description}. Finally, the diagram commutes, since 
\[
\ch^{\otimes}(M(\m)) = \sum_{\n\in \Z_{\geq0}^{\seg}} A(\m,\n)\exp(\n)
\]
holds by definition, and $\{\exp(\m)\}_{\m\in \Z_{\geq0}^{\seg}}$ give a basis for $\Z[\seg]$.

\end{proof}

\subsubsection{Proof of Theorem \ref{thm:introD}}

Let $\m,\n\in \Z_{\geq0}^{\seg}$ be fixed multisegments, and $\underline{\m}$ be a fixed ordering of $\m$. 

When combining the bijection of Theorem \ref{thm:theta-transfer} with Proposition \ref{prop:+0}, we see that for each bitableau $(a_j^i , e^i_j)_{i,j}\in \mathcal{K}(\underline{\m})_+$, each entry satisfies $a_j^i\in D(\m)$. Bearing in mind the inclusion $\mathcal{K}(\underline{\m})_+\subseteq \mathcal{K}(\underline{\m})_0$ and writing $E(\m)= \{d_1<\ldots<d_{l_{\m}}\}$, an injective map
\begin{equation}\label{eq:AK}
\mathcal{P}=(a_j^i , e^i_j)_{i,j}\in \mathcal{K}(\underline{\m})_+ \;\mapsto\; \mathcal{P}^\dagger = (a_j^i , d_{e^i_j})_{i,j}\in \mathcal{A}(\m)\;,
\end{equation}
is now visible, where $\mathcal{A}(\m)$ is the set of bitableaux as defined in Section \ref{sect:intro-tabl}.

Moreover, the formula of Proposition \ref{prop:crit-mackey-conn} may be written in the form of 
\[
\theta(\mathcal{P}) = \sum_{\Delta\in \seg} \left( t(\mathcal{P}^\dagger,\Delta) - \overleftarrow{t}(\mathcal{P}^\dagger,\Delta)\right)\Delta\;,
\]
for all $\mathcal{P}\in \mathcal{K}(\underline{\m})_+$.

Thus, the map in \eqref{eq:AK} is a bijection between $\mathcal{K}(\underline{\m})_+$ and $\mathcal{A}(\m)_+$, and the formula of Corollary \ref{cor:Kcounting} is identified with the one in the statement of Theorem \ref{thm:introD}.

\section{Compatibility with quantum affine Schur--Weyl duality}

We now bring the affine Hecke and quantum affine sides together through the quantum affine Schur--Weyl duality functors of \cite{Chari-Pressley}. 

Here we assume that the same parameter $v\in \C^\times$ defines both the quantum affine algebras $U_{N+1}$ and the affine Hecke algebras $H_n=H_{n,v}$.

For every pair of integers $n\geq 1$ and $N\geq 1$, quantum affine
Schur--Weyl duality provides an exact functor
\[
        \mathcal{F}_{N,n}:\Rep(H_n)\longrightarrow \mathcal{C}_N .
\]
We also use the convention that $\mathcal{F}_{N,0}$ sends the unique simple $H_0$-module to the monoidal unit $\mathbf{1}\in\mathcal{C}_N$. 

These functors may be unified into a functor
\[
        \mathcal{F}_N:=\oplus_{n\geq 0}\mathcal{F}_{N,n}:
        \bigoplus_{n\geq 0}\Rep(H_n)\longrightarrow \mathcal{C}_N .
\]
A fundamental property of this construction is its compatibility (\cite[Proposition 4.7]{Chari-Pressley}) with the
monoidal structures on both sides: for $V_i\in \Rep(H_{n_i})$, $i=1,2$, there are natural isomorphisms
\[
        \mathcal{F}_N(V_1\times V_2)\cong \mathcal{F}_N(V_1)\otimes \mathcal{F}_N(V_2).
\]
Thus, $\mathcal{F}_N$ induces a homomorphism $[\mathcal{F}_N]:\mathbf{R}\to  G[N]$ of Grothendieck rings.

The following compatibility describes this homomorphism in the
parametrizations used throughout this work.






\begin{theorem}\label{thm:commute-qasw}
    The diagram of ring homomorphisms
\[
\begin{diagram}
\dgARROWLENGTH=2em
\node{ \Z\left[\widetilde{\seg}\right] } \arrow{s,t}{\widetilde{\underline{\zeta}} }  \arrow{e,t}{ P_N } \node{  \Z[\mathscr{D}_N] }\arrow{s,t}{ S}\\
\node{\mathbf{R} } \arrow{e,t}{ \Large [\mathcal{F}_N]} \node{ G[N]  }
\end{diagram}\;
\]
commutes, where $P_N$ is the quotient map defined in Section \ref{sect:bz-intro}, $\widetilde{\underline{\zeta}}$ is the Zelevinsky isomorphism of Section \ref{sect:zel}, and $S$ is the isomorphism of Theorem \ref{prop:poly-ring} which realized the Drinfeld polynomials classification.

\end{theorem}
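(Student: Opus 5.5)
Since all four maps in the square are ring homomorphisms and $\Z[\widetilde{\seg}]$ is a polynomial ring in the segments, I will check the identity $[\mathcal{F}_N]\circ\widetilde{\underline{\zeta}} = S\circ P_N$ only on the generators $\exp(\Delta)$, $\Delta\in\widetilde{\seg}$. Multiplicativity of $[\mathcal{F}_N]$ comes from the monoidality $\mathcal{F}_N(V_1\times V_2)\cong\mathcal{F}_N(V_1)\otimes\mathcal{F}_N(V_2)$ \cite[Proposition 4.7]{Chari-Pressley}. The left composite is multiplicative because $\widetilde{\underline{\zeta}}$ is a ring isomorphism by Section \ref{sect:zel}, and $P_N$ and $S$ are ring homomorphisms by definition and Proposition \ref{prop:poly-ring}. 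For a segment $\Delta=[av^{-k},av^{k}]$, the element $\widetilde{\underline{\zeta}}(\exp(\Delta))$ is the class of the one-dimensional segment module $\sigma^\ast_\eta Z([\,\cdot\,])$ of $H_{k+1}$. The goal therefore reduces to computing $\mathcal{F}_{N,k+1}$ on this module.

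I will treat the three cases of $P_N$ separately. For $k>N$ the claim is that $\mathcal{F}_{N,k+1}(Z(\Delta))=0$. In the Chari--Pressley construction $\mathcal{F}_{N,n}(V)=(\C^{N+1})^{\otimes n}\otimes_{H^{fin}_n}V$, and on $Z(\Delta)$ the finite Hecke part acts by the sign character $T_i\mapsto -1$. This quotient is then the $v$-antisymmetric power $\Lambda^{k+1}_v\C^{N+1}$, which vanishes when $k+1>N+1$. For $k\le N$ the same computation gives the evaluation module built on $\Lambda^{k+1}_v\C^{N+1}$, with spectral parameter determined by the eigenvalues of the $y_i$, namely a geometric string centered at $a$. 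For $k=N$ this is the one-dimensional top exterior power. I will check that the affine generators act trivially on it, so that it is $\mathbf{1}$ and matches $P_N(\Delta)=1$. For $k<N$ I must identify the evaluation module with the fundamental module $L(Y(k+1,a))$. Both modules are simple: the evaluation representation of a fundamental $\mathfrak{sl}_{N+1}$-representation is simple, and so is the fundamental module. It is then enough to compare highest $\ell$-weights, that is, Drinfeld polynomials.

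The main obstacle is the bookkeeping of conventions for the spectral parameter. The string $y_i\mapsto\eta v^{2(b-i+1)}$ must land exactly on the center $a$ in $Y(k+1,a)$, and not on $av^{\pm1}$, $a^{-1}$, or $av^{\pm 2}$. This depends on the normalizations of the functor, of the evaluation map, and of $L(Y(i,a))$ in \cite{fm2001}. To fix the normalization I will first settle the case $k=0$, where $Z([x,x])$ maps to the vector representation $V(a)$. I will compare this with Nakajima's formula used in Theorem \ref{thm:qchar-ident}, whose highest monomial for $l=1$ is $Y(1,v^p)$. For general $k$, the module $Z(\Delta)$ is the unique simple quotient of the product of the $Z([c,c])$ taken along the segment in the appropriate order. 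Applying $\mathcal{F}_N$ and using monoidality and exactness realizes $\mathcal{F}_N(Z(\Delta))$ as a simple subquotient of $V(av^{-k})\otimes\cdots\otimes V(av^{k})$. That subquotient contains the product of the highest $\ell$-weights, and the $q$-segment property of the vector representations then identifies it as $Y(k+1,a)$, consistent with the identification $\phi$ of Section \ref{sect:phimap}. Finally, the non-integral blocks follow from the integral ones via the intertwining relations \eqref{eq:intertwHeta} and \eqref{eq:intertwUeta}, after checking that $\mathcal{F}_N\circ\sigma^\ast_\eta\cong\tau^\ast_\eta\circ\mathcal{F}_N$ up to the appropriate rescaling of $\eta$.
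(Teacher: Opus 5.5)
Your reduction to ring generators is the same as the paper's. Your handling of $k>N$ and $k=N$ through $\Lambda^{k+1}_v\C^{N+1}$ is also fine, using that any one-dimensional type $1$ module is $\mathbf{1}$. After that, the paper computes nothing. It quotes \cite[Theorem 7.6]{Chari-Pressley}, \cite[Proposition 4.9]{kkk-inentiones} and \cite[Proposition 7]{arakawa-formula} for $\mathcal{F}_N(\sigma_\eta^\ast Z(\Delta_0))\cong L(Y(l,\eta v^p))$, and these references are what fix the spectral normalization.

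Your attempt to re-derive that identification for $k<N$ contains a step that fails. For $k\geq1$, $\mathcal{F}_N(Z(\Delta))$ cannot contain the product $\prod_{i=0}^{k}Y(1,av^{2i-k})$ of the highest $\ell$-weights. That monomial has $U_v(\mathfrak{sl}_{N+1})$-weight $(k+1)\omega_1$, which is not a weight of $\Lambda^{k+1}_v\C^{N+1}$. The product belongs to a different composition factor of $V(av^{-k})\otimes\cdots\otimes V(av^{k})$. In fact, in the order in which $\mathcal{F}_N(Z(\Delta))$ is a quotient, the tensor product is not generated by the tensor product of highest weight vectors; otherwise the nonzero quotient would contain that weight. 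So this step does not produce $Y(k+1,a)$.

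The base case is also not actually settled. Nakajima's formula describes $\chi^N(L(Y(1,v^p)))$. It cannot tell you which $L(Y(1,c))$ equals $\mathcal{F}_{N,1}$ of the character $y_1\mapsto a$. That requires the explicit Chari--Pressley action of $U_{N+1}$ through the $y_i$, read off in Drinfeld generators. This is precisely the normalization content of the cited results, and the step you yourself identify as the main obstacle.

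Here is a correct way to finish along your lines, once $\mathcal{F}_N(Z([c,c]))\cong L(Y(1,c))$ is established from the definition of the functor. Since $\mathcal{F}_N(Z(\Delta))$ restricts to the simple module $\Lambda^{k+1}_v\C^{N+1}$, it is simple with highest $\ell$-weight $Y(k+1,c)$ for some $c$. By additivity of $\chi^N$ on the Grothendieck group, $Y(k+1,c)$ must occur in $\prod_{i=0}^{k}\chi^N(L(Y(1,av^{2i-k})))$. Now write $\Delta=[x,x+k]$ in integral notation. In the language of the words in $S(x+i,x+i)$, each factor contributes a term $[x+i,b_i]-[x+i+1,b_i]$ with $b_i\geq x+i$. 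Cancellation of all negative terms forces $b_0=\cdots=b_k=x+k$. So the only monomial of the form $Y(k+1,\cdot)$ is the one attached to $[x,x+k]$, namely $Y(k+1,a)$; this is where Nakajima's formula legitimately enters.

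Finally, your separate reduction of the non-integral blocks is unnecessary, since this argument works for every center $a\in\C^\times$. Note also that the paper deduces the intertwining relation of Corollary \ref{cor:decomp} from this theorem, so it cannot serve as input. An unspecified rescaling of $\eta$ would, moreover, change the statement being proved.
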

\begin{proof}
It is enough to verify the equality on the ring generators $\exp(\Delta)\in  \Z\left[\widetilde{\seg}\right]$, for a segment $\Delta \in \widetilde{\seg}$.

Let us write $\Delta = \alpha_\eta(\Delta_0)$, for $\Delta_0=\left[\frac{p-l+1}{2},\frac{p+l-1}{2}\right]\in \seg$ and $\eta\in \C^\times$. Thus, $\widetilde{\underline{\zeta}}(\exp(\Delta)) = [\sigma_\eta^\ast(Z(\Delta_0)]\in \mathbf{R}_\eta$.

It is known by \cite[Theorem 7.6]{Chari-Pressley}, \cite[Proposition 4.9]{kkk-inentiones} or \cite[Proposition 7]{arakawa-formula} that segment modules are sent to fundamental modules under the functor $\mathcal{F}_N$. More precisely, the cited literature gives the formula
\[
\mathcal{F}_N(\sigma_\eta^\ast(Z(\Delta_0)) \cong \left\{ \begin{array}{ll} L(Y( l, \eta v^p)) & l< N \\ \mathbf{1} & l=N \\ 0 & l>N\end{array}  \right.\;.
\]
In particular, we see that the defining formula for $P_N$ gives $[\mathcal{F}_N(\widetilde{\underline{\zeta}}(\exp(\Delta)))] = S(P_N(\exp(\Delta)))$.

\end{proof}

\begin{corollary}\label{cor:decomp}
The map $[\mathcal{F}_N]$ is surjective.

It satisfies the intertwining relation $\tau_\eta^\ast\circ[\mathcal{F}_N]=[\mathcal{F}_N]\circ \sigma_\eta^\ast$, for all $\eta\in \C^\times$, with respect to twisting by algebra automorphisms on both sides. In particular, $[\mathcal{F}_N](\mathbf{R}_\eta) = G_\eta[N]$ holds, for all $\eta\in \C^\times$.
\end{corollary}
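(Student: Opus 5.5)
The statement to prove is Corollary \ref{cor:decomp}. I will deduce all three claims from Theorem \ref{thm:commute-qasw}, which gives the identity $[\mathcal{F}_N]\circ\widetilde{\underline{\zeta}} = S\circ P_N$ of ring homomorphisms.

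For surjectivity, $\widetilde{\underline{\zeta}}$ is an isomorphism and $S$ is an isomorphism by Proposition \ref{prop:poly-ring}. It therefore suffices that $P_N$ is surjective. This is immediate: each generator $Y(i,a)\in\mathscr{D}_N$ is the image $P_N(\exp(\Delta))$ of the segment $\Delta=[av^{-(i-1)},av^{i-1}]$ with $k=i-1<N$. Since $P_N$ is a ring map onto a polynomial ring generated by these variables, it is onto.

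For the intertwining relation, I plan to check it on ring generators, because all maps involved are ring homomorphisms. On the source, $\sigma_\eta^\ast$ acts on $\mathbf{R}$ through Proposition \ref{prop:bzring-decomp} and \eqref{eq:intertwHeta}. Under $\widetilde{\underline{\zeta}}$ it corresponds to the shift $\alpha_\eta$ on segments, $[x,y]\mapsto[\eta x,\eta y]$. On the target, \eqref{eq:intertwUeta} gives $\tau_\eta^\ast\circ S=S\circ\alpha_\eta$. It thus reduces to the evident identity $P_N\circ\alpha_\eta=\alpha_\eta\circ P_N$ on generators: shifting the center $a$ of a segment by $\eta$ shifts the spectral parameter of $Y(k+1,a)$ by $\eta$, and leaves the cases $1$ and $0$ unchanged.

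The main bookkeeping obstacle is that $\sigma_\eta^\ast$ is really defined only as an equivalence $\Rep_1\to\Rep_\eta$, or as a module twist on all of $\Rep(H_n)$. I would treat it as the module twist. On a segment module it sends $Z(\alpha_{\eta'}(\Delta_0))$ to $Z(\alpha_{\eta\eta'}(\Delta_0))$, and it is compatible with induction, so it is a ring automorphism of $\mathbf{R}$. Then $\tau_\eta^\ast$ is applied on the level of classes. One must also check that both sides define ring maps. This holds because the twists commute with induction and tensor product respectively; the latter uses that $\tau_\eta$ is a Hopf automorphism up to the standard conventions of \cite{fm2001}.

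Finally, $[\mathcal{F}_N](\mathbf{R}_\eta)=G_\eta[N]$ follows the same way. The image of $\mathbf{R}_\eta=\widetilde{\underline{\zeta}}(\Z[\seg(\eta)])$ is $S(P_N(\Z[\seg(\eta)]))$. Segments in $\seg(\eta)$ map under $P_N$ to $Y(l,\eta v^{p})$ with $p+l+1$ even, which are exactly the variables of $\mathscr{D}_{N,\eta}$; indeed, for $l=y-x+1$ and $p=x+y$, we get $p+l+1=2y+2$. Conversely, every such variable arises in this way. Hence $P_N(\Z[\seg(\eta)])=\Z[\mathscr{D}_{N,\eta}]$, and applying $S$ gives $G_\eta[N]$. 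Alternatively, this follows from the case $\eta=1$ together with the intertwining relation.
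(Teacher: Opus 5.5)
Your proposal is correct and takes essentially the same approach as the paper. The paper's proof combines $P_N\circ\alpha_\eta=\alpha_\eta\circ P_N\circ\alpha_1$ with \eqref{eq:intertwHeta}, \eqref{eq:intertwUeta} and Theorem \ref{thm:commute-qasw}; you additionally spell out two steps the paper leaves implicit, namely the surjectivity of $P_N$ and the parity check $P_N(\Z[\seg(\eta)])=\Z[\mathscr{D}_{N,\eta}]$.
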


\begin{proof}
We clearly have an equality $P_N\circ \alpha_\eta = \alpha_\eta\circ P_N \circ \alpha_1: \Z[\seg]\to \Z[\mathscr{D}_N]$ of ring homomorphisms.

That equality in combination with the identities \eqref{eq:intertwHeta}, \eqref{eq:intertwUeta} and Theorem \ref{thm:commute-qasw} provides the desired relation.

\end{proof}

\subsection{Proof of Theorem \ref{thm:Aintro}}

Let us fix a module $V\in  \Rep(H_n)$ and set $W = \mathcal{F}_N(V)\in \mathcal{C}_N$. We would like to prove the identity $P_N(\ch^\otimes(V)) = \chi^N(W)_+$ in the ring $\Z[\mathscr{D}_N]$.

Since all of the maps involved factor through the tensor decompositions of Proposition \ref{prop:bzring-decomp}, \eqref{eq:decomp-ring} and \eqref{eq:qa-tensor}, we may assume that $V\in \Rep_\eta(H_n)$, for $\eta\in \C^\times$. Moreover, by Corollary \ref{cor:decomp}, the identity \eqref{eq:qch-twist} and the definition of $\ch^\otimes$ in Section \ref{sect:nonintegral-qch}, we may apply twisting by the algebra automorphism $\sigma_\eta^\ast$ and reduce the problem to the case of $\eta=1$.

Thus, we assume that $[V]\in \mathbf{R}_1$, $[W]\in G_1[N]$ and the equality 
\begin{equation}\label{eq:want}
p_N(\ch^\otimes(V)) = \phi(\chi^N(W))_+
\end{equation}
in the ring $\Z[\seg_{\leq N}]$ remains to be proved.

By the Zelevinsky classification of Section \ref{sect:zel}, we may write $[V]=\underline{\zeta}(X)$, for a polynomial $X\in \Z[\seg]$. By Theorem \ref{thm:main}, we have $\ch^\otimes(V) = \qchp(X)\in \Z[\seg]$.

Lemma \ref{lem:coheren-plus} now implies that 
\begin{equation}\label{eq:have}
p_N(\ch^\otimes(V)) = \qchN(p_N(X))_+\;.
\end{equation}

On the other hand, Theorem \ref{thm:commute-qasw} implies that $S(\phi^{-1}(p_N(X))) = S(P_N(\alpha_1(X))) = [W]$, where $\phi$ is the map from Section \ref{sect:phimap}. Thus, it follows from Theorem \ref{thm:qchar-ident} that 
\begin{equation}\label{eq:have2}
\phi(\chi^N(W)) = \qchN(p_N(X))\;.
\end{equation}
The equality \eqref{eq:want} now follows from \eqref{eq:have} and \eqref{eq:have2}.

\subsection{Proof of Theorem \ref{thm:introC}}

The following is a direct corollary of Theorems \ref{thm:Aintro} and \ref{thm:introB}.

\begin{corollary}\label{cor:last}
Let $V\in \Rep_1(H_n)$ be a given module, and $W = \mathcal{F}_N(V) \in \mathcal{C}_{N,1}$ be its image under the quantum affine Schur--Weyl duality.

Let $a_V: \mathbf{f}\to \mathbb{A}$ be the $\mathbb{A}$-module homomorphism, that is given by $[V] = \Phi(a_V\otimes 1)$, through the categorification isomorphism $\Phi$ of Theorem \ref{thm:ch-quant}.

Then, the formula
\[
\chi^N_+(W) = \sum_{\m\in \Z_{\geq0}^{\seg}} a_V(m_{\m})(1) p_N(\exp(\m)) \in \Z[\mathscr{D}_N]
\]
provides the dominant part of the $q$-character of the module $W$.

\end{corollary}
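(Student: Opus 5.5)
The plan is to chain Theorem \ref{thm:Aintro} (in the integral form \eqref{eq:want} proved above) with Theorem \ref{thm:rec-ch}, which is Theorem \ref{thm:introB}.

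First, apply Theorem \ref{thm:Aintro} to $V$ and $W=\mathcal{F}_N(V)$. Since $V\in\Rep_1(H_n)$, the reduction already carried out gives
\[
\phi(\chi^N(W))_+ = p_N(\ch^\otimes(V))\in \Z[\seg_{\leq N}].
\]
Transporting back through the identification $\phi$ of Section \ref{sect:phimap} identifies $p_N(\exp(\m))$ with $P_N$ of the corresponding integral multisegment. This yields $\chi^N_+(W)=P_N(\ch^\otimes(V))$, with $p_N$ read through $\phi^{-1}$ as in the statement.

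Second, expand $\ch^\otimes(V)$ using Theorem \ref{thm:rec-ch}. The theorem gives $\ch^\otimes\circ\Phi=\mathrm{sp}_{\mathscr{M}}$. Writing $a_V=\sum_{\m} a_V(m_{\m})\,m_{\m}^\ast$ in the dual monomial basis (a finite sum within the graded piece $\mathbf{f}_n^\ast$), we get
\[
\ch^\otimes(V)=\mathrm{sp}_{\mathscr{M}}(a_V\otimes 1)=\sum_{\m} a_V(m_{\m})(1)\,\exp(\m),
\]
because $1\otimes m_{\m}^\ast$ is sent to $\exp(\m)$ and tensoring with $\mathrm{ev}_1$ evaluates the coefficient at $q=1$. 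Since $p_N$ is additive, applying it termwise gives the claimed formula.

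No genuine obstacle is expected. The only care needed is bookkeeping: the identification of $p_N(\exp(\m))$ with $P_N(\m)\in\Z[\mathscr{D}_N]$ via $\phi$ on the integral block, and the fact that only finitely many $\m$ of height $n$ contribute, so the sum is a polynomial.
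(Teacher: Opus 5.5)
Your proposal is correct and follows the paper's route exactly. The paper also obtains this corollary directly by combining the integral identity \eqref{eq:want} from the proof of Theorem \ref{thm:Aintro} with Theorem \ref{thm:rec-ch}, i.e.\ $\ch^\otimes\circ\Phi=\mathrm{sp}_{\mathscr{M}}$, and then applying $p_N$ termwise. One small correction to your bookkeeping: in the $A_\infty$ setting each $\mathbf{f}_n$ has infinite rank, so the expansion of $a_V$ in the dual monomial basis is finite because of homogeneity in the weight $\supp(\m)\in\Z_{\geq0}^{\mathcal{I}}$ (equivalently, because $\ch^\otimes(V)$ is a polynomial), not because of the height-$n$ grading alone.
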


For proving Theorem \ref{thm:introC}, let us now take a module $W\in \mathcal{C}_N$, which is either standard or simple, and is parametrized by the monomial $p_N(\exp(\m_0))\in \Z[\mathscr{D}_{N,1}]$ with $\m_0\in \Z_{\geq0}^{\seg}$.

The case of standard $W$ means that when taking an admissible ordering $\underline{\m_0}= \Delta_1\cdot \ldots\cdot \Delta_r$ of $\m_0$, the module in hand can be written as the product $W\cong L(p_N(\exp(\Delta_1)))\otimes\cdots \otimes L(p_N(\exp(\Delta_r)))$ of fundamental modules.

Hence, by Theorem \ref{thm:commute-qasw} we have $\mathcal{F}_N(V)\cong W$, when $V= \zeta(\m_0)$ is taken as a standard module of an affine Hecke algebra.

Proposition \ref{prop:kl} now shows that the functional $a_V$ of Corollary \ref{cor:last} is given in that case by the dual basis element to $e_{\m_0}$, in the PBW basis $\mathscr{E} = \{e_{\m}\}_{\m\in \Z_{\geq0}^{\seg}}$ for $\mathbf{f}$. Thus, the formula \eqref{eq:C1} for $\chi^N_+(W)$ follows from that corollary.

As for the case of a simple $W$, the Drinfeld polynomials classification claims that $W$ may be produced as the unique simple quotient module of the standard module $\mathcal{F}_N(\zeta(\m_0))$ that was treated in the former case. Here we deduce that $\mathcal{F}_N(Z(\m_0))\cong W$ must hold.

Thus, $V= Z(\m_0)$ can be chosen, so that $a_V= b_0^\ast$ holds, for a dual canonical basis element, and the formula \eqref{eq:C1} follows again from Corollary \ref{cor:last}.

Finally, we recall that for a multiset $\m^\dagger\in \Z_{\geq0}^{\mathscr{D}_{N,1}}$, the corresponding weight dimension in $W$ is given by the coefficent of the monomial $\exp(\m^\dagger)$ in the polynomial $\chi^N(W)$. Thus, we have
\begin{equation}\label{eq:wtformula}
\dim(W_{\m^\dagger}) = \sum_{\m\in \Z_{\geq0}^{\seg}\;:\; p_N(\exp(\m)) = \exp(\m^\dagger)} a_V(m_{\m})(1)
\end{equation}
from Corollary \ref{cor:last}.

It is a known feature of the setup in Section \ref{sect:qgroups} that the transition matrices in $\mathbf{f}$ between canonical, PBW and monomial bases are all homogeneous, in the sense that $e_{\m_0}^\ast(m_{\n}) = b_0^\ast(m_{\n})=0$ holds, whenever $\supp(\n)\neq \supp(\m_0)$.

Thus, formula \eqref{eq:C2} of Theorem \ref{thm:introC} now follows, in both standard and simple cases, from \eqref{eq:wtformula} and the following lemma.

\begin{lemma}
Let $\m,\m'\in \Z_{\geq0}^{\seg}$ be multisegments with $\supp(\m)= \supp(\m')$ and $p_N(\exp(\m)) = p_N(\exp(\m'))\neq0$, for some $N\geq1$.  Then, $\m=\m'$.
\end{lemma}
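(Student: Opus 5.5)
We argue by unwinding the definition of $p_N$ on monomials. This reduces the claim to the elementary fact that a multisegment all of whose segments have the same length is determined by its support.

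\emph{Step 1: $\m$ and $\m'$ differ only in segments of length $N+1$.} By definition $p_N = t_{\seg_{\leq N+1},\seg_N}\circ \mathrm{res}_{N+1}$. On a generator $\exp(\Delta)$ it acts as in the proof of Lemma \ref{lem:commutepN}: $p_N(\exp(\Delta))=\exp(\Delta)$ for $\Delta\in\seg_{\leq N}$, $p_N(\exp(\Delta))=1$ for $\Delta\in \seg_{N+1}$, and $p_N(\exp(\Delta))=0$ for longer $\Delta$. Since $p_N$ is a ring homomorphism, the hypothesis $p_N(\exp(\m))\neq 0$ forces every segment of $\m$ to have length at most $N+1$. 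The same holds for $\m'$. We then write
\[
\m=\mathfrak{u}+\mathfrak{a},\qquad \m'=\mathfrak{u}'+\mathfrak{a}',
\]
with $\mathfrak{u},\mathfrak{u}'\in \Z_{\geq0}^{\seg_{\leq N}}$ and $\mathfrak{a},\mathfrak{a}'\in\Z_{\geq0}^{\seg_{N+1}}$. Then $p_N(\exp(\m))=\exp(\mathfrak{u})$ and $p_N(\exp(\m'))=\exp(\mathfrak{u}')$. Distinct monomials in the polynomial ring $\Z[\seg_{\leq N}]$ are distinct, so $\mathfrak{u}=\mathfrak{u}'$. By additivity of $\supp$, it follows that $\supp(\mathfrak{a})=\supp(\mathfrak{a}')$.

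\emph{Step 2: segments of a fixed length are determined by their support.} Set $L=N+1$ and write $\mathfrak{a}-\mathfrak{a}'=\sum_{x\in\Z} c_x [x,x+L-1]$, a finitely supported integer combination with $\supp(\mathfrak{a}-\mathfrak{a}')=0$. Suppose some $c_x\neq 0$, and let $x_0$ be the minimal index with $c_{x_0}\neq0$. The coefficient of $\alpha_{x_0}$ in the support is
\[
0=\supp(\mathfrak{a}-\mathfrak{a}')(x_0)=\sum_{x_0-L<z\leq x_0} c_z .
\]
By minimality of $x_0$, every term with $z<x_0$ vanishes, so the sum equals $c_{x_0}$. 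This gives $c_{x_0}=0$, a contradiction. Hence $\mathfrak{a}=\mathfrak{a}'$, and therefore $\m=\m'$.

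The only point needing care is Step 1, namely the precise behaviour of $\mathrm{res}_{N+1}$ and $t_{\seg_{\leq N+1},\seg_N}$ on polynomial monomials; this was already made explicit in the proof of Lemma \ref{lem:commutepN}. Step 2 is a routine triangularity argument.
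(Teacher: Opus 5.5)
Your proof is correct and follows the paper's argument: you split off the length-$(N+1)$ part, use injectivity of $p_N\circ\exp$ on $\Z_{\geq0}^{\seg_{\leq N}}$, and then show that a multisegment of fixed segment length is determined by its support. The only difference is cosmetic. The paper proves the fixed-length step by induction on height, peeling off the segments that end at the maximal point of the support. You instead run the mirror-image triangularity argument at the minimal starting point of the signed difference $\mathfrak{a}-\mathfrak{a}'$.
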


\begin{proof}

Let us first assume that $p_N(\exp(\m))=1$, that is, $\m,\m'\in \Z_{\geq0}^{\seg_{N+1}}$, and prove the claim by induction on the height $|\supp(\m)|$.

Let us write $d_0 = \max\{d:\;\supp(\m)(d)\neq0\}$ and $r_0 = \supp(\m)(d_0)$. We clearly have $d_0 = \max E(\m) = \max E(\m')$, and
\[
\m[d_0] = r_0\cdot [d_0- N, d_0] = \m'[d_0]\neq0\;.
\]
The induction hypothesis now implies that the multisegments $\m - \m[d_0], \m'-\m'[d_0]\in \Z_{\geq0}^{\seg_{N+1}}$ are equal, and the claim follows.

Now, let us take the general case of $\m,\m'\in \Z_{\geq0}^{\seg}$. The non-vanishing of $p_N(\exp(\m))$ assumption implies that $\m,\m'\in \Z_{\geq0}^{\seg_{\leq N+1}}$.

Let us decompose $\m = \n + \mathfrak{r}$, $\m' = \n' + \mathfrak{r}'$, so that $\n,\n'\in \Z_{\geq0}^{\seg_{\leq N}} $ and $\mathfrak{r}, \mathfrak{r}'\in \Z_{\geq0}^{\seg_{N+1}}$.

Thus, $p_N(\exp(\n)) = p_N(\exp(\m)) = p_N(\exp(\m')) = p_N(\exp(\n'))$ holds, but $p_N\circ \exp$ is injective on $\Z_{\geq0}^{\seg_{\leq N}}$. Therefore, we have $\n = \n'$, and $\supp(\mathfrak{r}) = \supp(\mathfrak{r}')$. The equality $\mathfrak{r} = \mathfrak{r'}$ now follows from the special case treated earlier.

\end{proof}

\begin{sloppypar} 
\printbibliography[title={References}] 
\end{sloppypar}

\end{document}